\newtheorem{thm}{Theorem}[section]
\newtheorem{lem}[thm]{Lemma}
\newtheorem{cor}[thm]{Corollary}
\newtheorem{Def}[thm]{Definition}
\newtheorem{prop}[thm]{Proposition}
\newtheorem{examp}[thm]{Example}
\begin{document}

\title{Local Kernels and the Geometric Structure of Data}
\author{Tyrus Berry}
\address{Dept. of Mathematics, Pennsylvania State University, University Park, PA 16802}
\author{Timothy Sauer}
\address{Dept. of Mathematical Sciences, George Mason University, Fairfax, VA 22030}

\begin{abstract}
We introduce a theory of \emph{local} kernels, which generalize the kernels used in the standard diffusion maps construction of nonparametric modeling.  
We prove that evaluating a local kernel on a data set gives a discrete representation of the generator of a continuous Markov process, which converges in the limit of large data. We explicitly connect the drift and diffusion coefficients of the process to the moments of the kernel. Moreover, when the kernel is symmetric, the generator is the Laplace-Beltrami operator with respect to a geometry which is influenced by the embedding geometry and the properties of the kernel. In particular, this allows us to generate any Riemannian geometry by an appropriate choice of local kernel. In this way, we continue a program of Belkin, Niyogi, Coifman and others to reinterpret the current diverse collection of kernel-based data analysis methods and place them in a geometric framework. We show how to use this framework to design local kernels invariant to various features of data.  These data-driven local kernels can be used to construct conformally invariant embeddings and reconstruct global diffeomorphisms.

\end{abstract}

\begin{keyword}
diffusion maps \sep local kernels \sep Markov matrix \sep It\^o process \sep nonparametric modeling


\end{keyword}

\maketitle


\section{Introduction}

The need to analyze massive data sets in Euclidean space has led to a proliferation of research activity, including methods of dimension reduction and manifold learning. In general, understanding large data means identifying intrinsic characteristics of the data and developing techniques to isolate them.

Various attempts have been made to generalize principal component analysis (PCA) for this purpose. For example, the method of Kernel PCA \cite{kpca1,kpca2} has led to large classes of kernels, which specify the degree of affinity between pairs of points. For a data set consisting of $N$ points, Kernel PCA constructs a symmetric positive-definite $N\times N$ matrix $K$ of inner products, and considers the eigenvectors as coordinates. The perspective taken by kernel PCA is that the distance defined by the inner product will be represented by Euclidean distance in $\mathbb{R}^N$, and taking only the first $M<N$ eigenvectors as coordinates, will optimally approximate these distances in $\mathbb{R}^M$. This will be successful for flat manifolds, but geodesic distances on general curved manifolds will not be preserved.  For example, a sphere cannot be mapped into a finite-dimensional Euclidean space in a way that translates geodesic distances into Euclidean distances.

While kernel PCA tries to understand the data by mapping it to another, usually high-dimensional feature space, an alternative approach attempts to encode structure through differential operators by assuming the data lies on a manifold. There has been a movement to reinterpret kernel PCA methods geometrically, as a form of manifold learning, for a particular class of kernels. 
Belkin and Niyogi \cite{BN} and Coifman and collaborators \cite{diffusion} focused on kernels that depend only on the distance between points in ambient space, and that have exponential decay with distance. They used these kernels to estimate the Laplacian on the manifold described by the data.  The Laplacian encodes all of the geometric information contained in the data. This differs from the interpretation of kernel PCA in two important ways: (1) the matrix $K$ is viewed as an approximation of a differential operator, and (2) the eigenvectors are approximations to the eigenfunctions of the operator, evaluated on the data set.

The goal of this article is to extend the geometric perspective to a wider class of kernels. In fact, we show that all kernels with exponential decay can be interpreted as defining a Laplacian with respect to some Riemannian geometry.  We refer to this wider class as {\it local} kernels, because all information must flow through local interactions due to the strong decay.  In particular, local kernels include any kernel with compact support.  The kernels of \cite{BN,diffusion} are local, but because they are radially symmetric and independent of location on the manifold, can only access the geometry inherited from the ambient space. Later work of Coifman and Singer et al. \cite{Singer2008,Singer2009,dsilva} considered kernels that were not radially-symmetric from a non-geometric standpoint, and these kernels are closely related to the prototypical local kernels introduced in Section \ref{localkernels}. Local kernels extend the results of \cite{Singer2008} to a much larger class of kernels and naturally give rise to an intrinsic geometry on the data. In particular, Theorems \ref{maintheorem} and \ref{mainconverse} show that every symmetric local kernel corresponds to a Riemannian geometry and conversely, any Riemannian geometry can be represented with an appropriate local kernel. This opens up all kernels with exponential decay to exploitation by the whole range of geometric tools.

Moreover, when the local kernel is not symmetric, we show the kernel approximates the generator of a Markov process on the manifold defined by the data. From this perspective we can view the local kernel as defining transition probabilities between points on the manifold. We will show that in the limit of large data, an appropriate local kernel can be used to recover the generator of an arbitrary It\^o process. This generalizes the views of \cite{diffdist,diffcoords,diffusionslowmanifold,diffusionreductioncoords,adaptedDiffusion,Ting2010} which connected the diffusion maps construction to the generator of a Markov process in the case of a gradient flow.  In Section \ref{localkernels}, we connect this theory to the theory of nonlinear independent components of It\^o processes, which was introduced in \cite{Singer2008} and applied in \cite{Singer2009,dsilva}.  

One promising application of local kernels is geometric regularization. Properties of embedded data that are considered extrinsic for a particular purpose can be removed. Reducing to intrinsic properties allows comparison and classification of different data sets.
In Section \ref{mainresultex}, we show how to construct local kernels that result in geometries that are invariant under conformal isometries. We then show how to reconstruct a global diffeomorphism using a correspondence between the data sets. One application of this technique is to the problem of merging multiple observations with different modalities.

In Section \ref{dmbackground} we summarize the relevant developments and techniques related to diffusion maps as found in \cite{BN,diffusion,SingerEstimate,Hein05,singerWu,BH14}.  In Section \ref{localkernels} we generalize the diffusion maps construction to a large class of kernels called local kernels and in Section \ref{kernelgeometry} we show that symmetric local kernels are equivalent to Riemannian metrics in the limit of large data. Section \ref{mainresultex} contains applications of local kernels.

\section{The Geometric Prior and Diffusion Maps}\label{dmbackground}

Our typical assumption is that we are presented with a finite set of points on or near a manifold embedded in a high-dimensional Euclidean space, but with no {\it a priori} knowledge of the underlying manifold.  We will assume the manifold to be a compact $d$-dimensional differentiable manifold $\mathcal{M} \subset \mathbb{R}^n$. This is a nonparametric model for our data, since we assume that the manifold exists but we do not assume any parametric form.  We think of this assumption as a \emph{geometric prior}.  Given the geometric prior, our goal is to learn the geometric structure of the data and exploit this structure to simplify and understand the data.   

A diffusion map to a lower-dimensional space is a method of representing the geometry of the data.  In rough analogy to the principal components from a singular value decomposition, the components of a diffusion map \cite{diffusion,diffcoords} are eigenvectors of a transition matrix for a random walk on the data set.  Under appropriate normalizations, the transition matrix is a discrete approximation to the Laplace-Beltrami operator, which encodes all the geometric features of the manifold inherited from the embedding \cite{laplacianBook}. 

The transition matrix is constructed by evaluating a kernel $K(x,y)$ on all pairs from a data set.  This yields a square $N\times N$ matrix, where $N$ is the number of data points, which is a discrete representation of a continuous operator.  The goals of these kernel based techniques are threefold: (1) to describe the operator limit based on the chosen kernel, (2) to give techniques to construct a desired operator in terms of the kernel, and (3) to describe the convergence of the discrete representation to the continuous operator in the limit of large data.  

Assuming a kernel of the form $K_{\epsilon}(x,y) = h(||x-y||^2/\epsilon)$, where $h$ has exponential decay, the first two goals were achieved definitively in the work of Coifman and Lafon \cite{diffusion} and the final goal was achieved by Singer \cite{SingerEstimate}.  In particular, this theory can be used to approximate the Laplace-Beltrami operator for data sampled from a Riemannian manifold, with arbitrary sampling distribution.  The remaining restriction of this theory is the special form of the kernel $K_{\epsilon}$ and in Sections \ref{localkernels} and \ref{kernelgeometry} we give a far-reaching generalization of the existing theory.  

To begin, we briefly summarize the relevant results of \cite{diffusion,SingerEstimate}.
Given a data set $\{x_i\}_{i=1}^N \subset \mathbb{R}^n$ sampled from a d-dimensional Riemannian manifold $\mathcal{M} \subset \mathbb{R}^n$ with sampling density $q$, the diffusion maps algorithm produces a $N \times N$ matrix which approximates the Kolmogorov operator
\[ \mathcal{L}f = \Delta f + (2-2\alpha)\nabla f \cdot \frac{\nabla q}{q} \]
where $\alpha$ is a constant which can be chosen in the diffusion maps construction.  Note that $\Delta$ is the Laplacian operator (with negative eigenvalues) and $\nabla$ is the gradient operator, and each are taken with respect to the Riemannian metric inherited from the ambient space $\mathbb{R}^n$.  The key to understanding diffusion maps is that continuous notions such as functions and operators are made discrete by writing them in the basis of the data set itself.  Thus, a function $f$ is represented by a vector $[f] = (f(x_1),f(x_2),...,f(x_N))^\top$ and an operator $\mathcal{A}$ is represented by a $N\times N$ matrix $A$ such that $(A[f])_i = \mathcal{A}(f)(x_i)$.  With this intuition in mind, we construct a matrix $J_{\epsilon}$ which represents a Markov chain on the data set with transition probabilities using the definitions
\begin{align}
J_{\epsilon}(x_i,x_j) &= \exp\left\{-\frac{||x_i-x_j||^2}{4\epsilon}\right\} &\hspace{10pt} q_{\epsilon}(x_i) &= \sum_{j=1}^N J_{\epsilon}(x_i,x_j)   \nonumber \\
J_{\epsilon,\alpha}(x_i,x_j) &= \frac{J_{\epsilon}(x_i,x_j)}{q_{\epsilon}(x_j)^{\alpha}} &\hspace{10pt} q_{\epsilon,\alpha}(x_i) &= \sum_{j=1}^N J_{\epsilon,\alpha}(x_i,x_j) \nonumber \\ 
\hat J_{\epsilon,\alpha}(x_i,x_j) &= \frac{J_{\epsilon,\alpha}(x_i,x_j)}{ q_{\epsilon,\alpha}(x_i)} &\hspace{10pt} L_{\epsilon,\alpha} &= \frac{\hat J_{\epsilon,\alpha}-I}{\epsilon} \nonumber
\end{align}
The crucial theoretical result of diffusion maps \cite{diffusion} is that in the limit as $N\to \infty$ and $\epsilon \to 0$ we have $L_{\epsilon,\alpha} \to \mathcal{L}$ and $\hat J_{\epsilon,\alpha}^{t/\epsilon} \to e^{t\mathcal{L}}$, in the sense that for any sufficiently smooth function $f$ at any point $x_k$ in the data set we have $(L_{\epsilon,\alpha}[f])_k \to \mathcal{L}f(x_k)$ and $(\hat J_{\epsilon,\alpha}^{t/\epsilon}[f])_k \to e^{t\mathcal{L}}f(x_k)$.  Moreover, when $q=1$ is uniform, Singer \cite{SingerEstimate} shows that
\[ L_{\epsilon,0}f(x) = \mathcal{L}f(x) + \mathcal{O}\left(\epsilon,\frac{||\nabla f(x)||}{\sqrt{N} \epsilon^{1/2+d/4}} \right) \]
with high probability.  

Since the data points $\{x_i\}$ are sampled independently from the density $q$,  $q_{\epsilon}(x_i) \propto q(x_i) + \mathcal{O}(\epsilon)$, meaning that $q_{\epsilon}$ is a kernel density estimate of the invariant measure. In fact the diffusion maps theory is much more general, and allows any kernel $J_{\epsilon}(x,y) = h({||x-y||^2}/{\epsilon})$ such that the shape function $h:[0,\infty)\to[0,\infty)$ has exponential decay at infinity and finite $m \equiv \frac{1}{2}\int_{\mathbb{R}^d} z_1^2 h(||z||^2)dz/\int_{\mathbb{R}^d} h(||z||^2) dz$.  The constant $m$ is related to the moments of the shape function, and the only modification required to the above construction is that $\frac{1}{m}L_{\epsilon,\alpha} \to \mathcal{L}$.  Note that for the exponential kernel above we find $m=1$ because the exponential was chosen to have variance $2$.

The diffusion maps algorithm essentially evaluates the kernel $J_{\epsilon}$ on all pairs from the data set and then applies two normalizations.  The first normalization divides the columns of the $J_{\epsilon}$ matrix by the column sums, $q_{\epsilon}(x_j)$, to the power $\alpha$.  We will refer to this as a \emph{right-normalization} since it is equivalent to multiplying the matrix $J_{\epsilon}$ on the right with a diagonal matrix with diagonal entries $q_{\epsilon}(x_j)^{-\alpha}$.  Note that in \cite{diffusion} both the rows and columns are divided by the column sums in this step, however this is numerical trick which tends to obfuscate the theoretical function of the right-normalization.  The second normalization takes the right-normalized matrix and divides the rows by the row sums, making $\hat J_{\epsilon,\alpha}$ a row-stochastic matrix.  We will refer to this normalization as a \emph{left-normalization}.

Intuitively, the right-normalization should be understood as a de-biasing which accounts for the fact that the discrete operator will be applied to functions which are evaluated on a data set that is sampled according to the density $q$.  The parameter $\alpha$ controls the degree to which the sampling distribution is allowed to bias the operator, and a key result of \cite{diffusion} is that setting $\alpha = 1$ removes the bias entirely and recovers the Laplace-Beltrami operator independent of the sampling density $q$.  The left-normalization has a more delicate theoretical explanation.  From the discrete perspective, the left-normalization makes the matrix into a row-stochastic (or Markovian) matrix.  In the continuous limit, the effect of the left-normalization is to eliminate a complicated curvature dependent term which appears in the expansion of $J_{\epsilon}$ (see Lemma \ref{diffmaplemma} below).  We note the fascinating correspondence between the Markovian normalization from the discrete perspective, and the isolation of the generator of a reversible stochastic process from the continuous perspective.

The foundation of the data-driven manifold learning approach is the assumption that the data is given by sampling data points on a manifold.  For this approach to be practical we must require the manifold to have non-vanishing sampling density.  In this sense, the manifold of interest is by definition the set of points where the sampling density is strictly positive.  For this set to be compact requires that the density function is bounded away from zero.  Recently it was shown in \cite{BH14} that the assumption of a compact manifold could be relaxed, allowing densities that decay to zero, by using a variable bandwidth kernel, analogous to those used in kernel density estimation.  

In order to allow the sampling density to be arbitrarily close to zero, the bandwidth function must be large in areas of small sampling and small in areas of large sampling. It was shown in \cite{BH14} that the sampling density could be estimated from the data set with sufficient accuracy to form an appropriate bandwidth function assuming that the dimension of the manifold was known. While the theory developed here will apply to variable bandwidth kernels, many of the large class of kernels that will be studied in Sections \ref{localkernels} and \ref{kernelgeometry} will not satisfy the constraints required to be applicable to non-compact manifolds.  In fact, the expansions in Sections \ref{localkernels} and \ref{kernelgeometry} do generalize to non-compact manifolds, simply by assuming the operators are only applied to functions that are square integrable with respect to the sampling measure.  The difficulty comes in using a discrete data set to approximate the integral operators as Monte Carlo integrals.  For many kernels the pointwise error bounds on these Monte Carlo integrals go to infinity as the sampling density goes to zero \cite{BH14}. Since we are interested in operators which can be approximated by discrete sampling, throughout this paper we will restrict our attention to compact manifolds.


\section{Generalization of diffusion maps to local kernels}\label{localkernels}

In this section we define \emph{local} kernels and show that under the geometric prior, each local kernel defines a geometry on the embedded manifold in the limit of large data.  Section \ref{mainresult} introduces the formal definition of a local kernel and develops a natural generalization of the results of diffusion maps in \cite{diffusion}.  In Section \ref{mainresultex} we give practical examples of how local kernels can be used to regularize the geometry on an embedded manifold.  For convenience and clarity we restrict our construction in this section to manifolds without boundary; we conjecture that the results could be extended to manifolds with boundary following the technique of \cite{diffusion}.

As we saw in Section \ref{dmbackground}, the standard diffusion maps construction starts with a kernel which can be written as a scalar function of the Euclidean distance, namely $J_{\epsilon}(x,y) = h(||x-y||^2/\epsilon)$.  Such a kernel is sometimes called a \emph{radial kernel}.  Our primary goal is to generalize the results of \cite{diffusion} to kernels of the form $K(\epsilon,x,y)$ that may be nonhomogeneous in $x$ and $y$, and may depend on norms other than the Euclidean norm used in the radial kernels.  Our primary assumption will be that the kernel is bounded above by a radial kernel, so that intuitively as $\epsilon\to 0$ the kernel strongly localizes the interactions between points (since $K$ is very close to zero when $x$ and $y$ are not close).  However, local kernels will not have to be homogeneous in $x$ and $y$ and will not need to decay at the same rate in all directions.  

The key property of $K$, that the kernel strongly localizes as $\epsilon\to 0$, motivates the name \emph{local kernels}.  It turns out that the decay rate does not need to be entirely independent of $\epsilon$.  If we think of $K(\epsilon,x,y)$ as defining a transition probability, a \emph{drift-free} kernel would be centered so that the maximum is at $y=x$.  In our definition, a local kernel does not have to be drift-free, so we will allow the maximum of the transition probability to be at $y = x + \sqrt{\epsilon}b(x)$.  While a local kernel does not have to be centered, the maximum must approach $y=x$ at a rate no slower than $\sqrt{\epsilon}$.  If the maximum approaches faster than $\sqrt{\epsilon}$ then the kernel will have the same limit as the associated centered kernel, but when the rate is precisely $\sqrt{\epsilon}$, the limiting operator contains a drift based on the vector field $b$.

\subsection{Local kernels and their associated Markov processes}\label{mainresult}

We now define {local kernels} and show how they generalize the radial kernels of \cite{diffusion}.  The key result will be that in the limit as $\epsilon\to 0$ the integral operator associated to a local kernel approximates the generator of a Markov process on the manifold $\cal M$, a $d$-dimensional smooth manifold.  The drift and diffusion coefficients of this Markov process depend on the moments of the local kernel computed on the tangent bundle of $\mathcal{M}$.

\begin{Def}[Local kernel] \rm A nonzero function $K : \mathbb{R} \times \mathbb{R}^n \times \mathbb{R}^n \to \mathbb{R}$ is called a \emph{local kernel} if there exists constants $c,\sigma>0$ and a vector field $b:\mathbb{R}^n\to\mathbb{R}^n$ independent of $\epsilon$ such that
\[0 \leq K(\epsilon,x,x + \sqrt{\epsilon}z) \leq ce^{-\sigma ||z-\sqrt{\epsilon}b(x)||^2} \]
for all $x,z \in \mathbb{R}^n$. 
\end{Def} 

The limiting continuous operator constructed via a local kernel is determined by the moments defined below.  Throughout this section and Section \ref{kernelgeometry} we fix a basis $\{\partial_{i} = \frac{\partial}{\partial x^i}\}_{i=1}^d$ for the tangent space $T_x\cal M$ at an arbitrary point $x\in \mathcal{M}$.  For convenience and without loss of generality we assume that the tangent space is aligned in the ambient space so that $z \in T_x\mathcal{M}\subset \mathbb{R}^n$ has coordinates $(z,0)^\top \in \mathbb{R}^n$.  Notice that local kernels include any function where $K(\epsilon,x,x+\sqrt{\epsilon}z)$ has compact support in $z$ for all $x$.  For example, $K(\epsilon,x,y) = \textup{max}\{1-||x-y||^2/\epsilon,0\}$ is a local kernel.

\begin{Def}[Moments of a local kernel] \rm For a local kernel $K$ define the zeroth, first, and second moment functions
\begin{align}\label{kerneldefs} m(x) &\equiv \lim_{\epsilon\to 0} \int_{T_x\mathcal{M}}K(\epsilon,x,x+\sqrt{\epsilon}\hat z)\,dz \nonumber \\
\mu_i(x)  &\equiv \lim_{\epsilon\to 0} \frac{1}{\sqrt{\epsilon}}\int_{T_x\mathcal{M}} z_i K(\epsilon,x,x+\sqrt{\epsilon}\hat z)\, dz \nonumber \\
C_{ij}(x) &\equiv \lim_{\epsilon\to 0} \int_{T_x\mathcal{M}} z_i z_j K(\epsilon,x,x+\sqrt{\epsilon}\hat z)\, dz
\end{align}
respectively, where $\hat z \in \mathbb{R}^n$ is equal to $z$ on $T_x\mathcal{M} \subset \mathbb{R}^n$ and zero in all orthogonal directions. 
\end{Def} 

Note that $\mu(x)$ is a $d$-dimensional vector-valued function on $\mathcal{M}$ and $C(x)$ is a $d\times d$ matrix-valued function on $\mathcal{M}$ based on the coordinates $dx_i$.  While we work in the basis $\{ \partial_i\}$, the vector $\mu$ and matrix $C$ transform appropriately as tensors so we will sometimes neglect the indices.  While the definition of the moments may seem impractical due to the need to integrate over each tangent space, we will see examples where these definitions simplify (such as the isotropic kernels defined below) and other examples where they are natural for data-driven algorithms.  For example, if we define a norm $||\cdot ||_{C(x)}$ where $C(x)$ is the correlation matrix based on the nearest neighbors of $x$ in the ambient space, in the limit of large data the correlation matrix will be rank $d$ and will only be a norm on the tangent space $T_x\cal M$.  

As we will see below, the standard radial kernel $J_{\epsilon}$ has $\mu=0$, so we introduce the following definition for this special class of kernels,
\begin{Def}[Drift-free kernel] \rm A local kernel is called a \emph{drift-free kernel} if the first moment $\mu$ is identically zero.
\end{Def}
The second property of $J$ in the diffusion maps construction is that the kernel is isotropic.
\begin{Def}[Isotropic local kernel] \rm A local kernel is called \emph{isotropic} if the second moment is a multiple of an orthogonal transformation.  Namely for some scalar function $\rho:\mathcal{M}\to\mathbb{R}$, the second moment matrix $C(x)$ satisfies $C(x)^TC(x) = \rho(x)\textup{Id}_{d\times d}$.
\end{Def}
Finally, the kernel $J$ is also homogeneous in the following sense.
\begin{Def}[Homogeneous local kernel] \rm A local kernel is called \emph{homogeneous} with respect to a moment if the moment is independent of $x$.
\end{Def}

We now show that any radial kernel is a local kernel which is drift-free, isotropic, and homogeneous in all moments.
\begin{prop} Assume a kernel $J$ can be written in the form $J(\epsilon,x,y) = h(||x-y||^2/\epsilon)$ where $|h(u)|< ce^{-u/\sigma}$ for some $c,\sigma$. Then $J$ is a local kernel which is drift-free, isotropic and homogeneous in all moments.
\end{prop}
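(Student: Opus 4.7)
The plan is to verify all four properties directly from the definitions, exploiting the two key features of a radial kernel: first, the substitution $y = x + \sqrt{\epsilon}z$ collapses $J(\epsilon, x, y) = h(\|x-y\|^2/\epsilon)$ into $h(\|z\|^2)$, eliminating $\epsilon$-dependence altogether; second, the integrand $h(\|z\|^2)$ is invariant under every orthogonal transformation of $z$, in particular under sign flips and coordinate permutations.

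First I would show $J$ is a local kernel. Substituting gives $J(\epsilon, x, x+\sqrt{\epsilon}z) = h(\|z\|^2)$. Taking $h\ge 0$ (implicit in the nonnegativity clause of the local-kernel definition), the exponential decay hypothesis yields $h(\|z\|^2)\le ce^{-\|z\|^2/\sigma}$, which is exactly the required bound with drift $b(x)\equiv 0$ and decay constant $1/\sigma$.

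Next I would compute the moments in \eqref{kerneldefs}. With the tangent space $T_x\mathcal{M}$ aligned so that $\hat z = (z,0)^\top$ has $\|\hat z\| = \|z\|$, the integrand $J(\epsilon,x,x+\sqrt{\epsilon}\hat z) = h(\|z\|^2)$ is independent of both $x$ and $\epsilon$, so the limits reduce to ordinary integrals over $\mathbb{R}^d$:
\[ m = \int_{\mathbb{R}^d} h(\|z\|^2)\,dz, \qquad \mu_i = \lim_{\epsilon\to 0}\tfrac{1}{\sqrt\epsilon}\int_{\mathbb{R}^d} z_i\, h(\|z\|^2)\,dz, \qquad C_{ij} = \int_{\mathbb{R}^d} z_iz_j\, h(\|z\|^2)\,dz. \]
The exponential bound guarantees absolute integrability. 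A parity argument ($z_i \mapsto -z_i$) makes the integrals defining $\mu_i$ and the off-diagonal entries of $C_{ij}$ vanish for every $\epsilon$, so $\mu\equiv 0$ (drift-free) and $C$ is diagonal. A coordinate swap $z_i\leftrightarrow z_j$ shows that all diagonal entries equal a common constant $\rho = \int_{\mathbb{R}^d} z_1^2 h(\|z\|^2)\,dz$, giving $C = \rho\,\textup{Id}$ and hence $C^\top C = \rho^2\,\textup{Id}$ (isotropy). Since none of $m, \mu, C$ depends on $x$, the kernel is homogeneous in all three moments.

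The argument presents essentially no obstacle; the entire content is that radial symmetry of $h(\|\cdot\|^2)$ trivializes each moment integral. The only mild care needed is to verify the alignment identity $\|\hat z\| = \|z\|$ and to justify interchanging limit and integral inside the moments, both of which follow immediately from the stated exponential bound on $h$.
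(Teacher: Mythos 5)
Your proposal is correct and follows essentially the same route as the paper's proof: substitute $y = x+\sqrt{\epsilon}\hat z$ to reduce the kernel to $h(\|z\|^2)$, kill $\mu$ and the off-diagonal entries of $C$ by oddness, and observe that the resulting moments are independent of $x$. The paper states these steps more tersely, but there is no substantive difference.
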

\begin{proof}
Since $h$ has fast decay $J$ is a local kernel.  Note that $J(\epsilon,x,x+\sqrt{\epsilon}\hat z) = h(||\hat z||^2) = h(||z||^2)$, therefore $\mu=0$ and
\[ C_{ij}(x) = \int_{T_x\mathcal{M}} z_i z_j h(||z||^2) \, dz = \delta_{ij} \int_{T_x\mathcal{M}} z_1^2 h(||z||^2) \, dz, \]
where the integral vanishes when $i\neq j$ since the integrand is odd.  Thus for $\rho(x) = \rho_0 = \int_{T_x\mathcal{M}} z_1^2 h(||z||^2)\, dz$ we have $C(x) = \rho_0 \textup{Id}_{d\times d}$, implying that $J$ is isotropic and homogeneous.
\end{proof}

While $J$ is homogeneous and isotropic, the right-normalized diffusion maps kernel $J_{\epsilon,\alpha}$ has a very special type of non-homogenous anisotropy that is determined by the $\alpha$ parameter.  As noted in Section \ref{dmbackground}, this anisotropy allows the diffusion maps construction to access different geometries which are conformally equivalent to the geometry induced by the ambient space.  However, this normalization is best understood as accounting for the sampling measure and we will return to this normalization in Section \ref{nonuniform}.  Our goal is to allow any type of non-homogeneous and anisotropic kernel and find the operators which can be approximated in the limit of $\epsilon \to 0$ using local kernels.  The following example is the prototype of a local kernel which can be used to define a geometry.

\begin{examp}[Prototypical local kernels]\label{prototype} \rm Let $A(x)$ be a matrix valued function on the manifold $\mathcal{M}$ such that each $A(x)$ is a symmetric positive definite $n\times n$ matrix and let $b(x)$ be a vector valued function.  Define the prototypical kernel with covariance $A$ and drift $b$ by
\[ K(\epsilon,x,y) = \exp\left(-\frac{(x-y-\epsilon b(x))^T A(x)^{-1}(x-y-\epsilon b(x))}{2\epsilon}\right). \]
We note that $K$ can be rewritten as
\[ K(\epsilon,x,y) =  \exp\left(-\frac{(x-y)^T A(x)^{-1}(x-y)}{2\epsilon} + (x-y)^\top A(x)^{-1}b(x) - \frac{\epsilon}{2} b(x)^{\top}A(x)^{-1}b(x) \right), \]
and that if we omit the term $\epsilon b^\top A^{-1}b$, the moments will not be affected because this term is higher order.  To define the moments we need to restrict the $n\times n$ matrix $A$ to the tangent space $T_x\cal M$, thus we define $\mathcal{I} = \mathcal{I}(x) : \mathbb{R}^n \to T_x\cal M$ to be the restriction of the ambient space to the tangent space (written in the basis $\{\partial_i\}$) so that $\mathcal{I}(x)$ is a $d\times n$ matrix.  The lower moments of the prototypical kernel are, $m(x) = (2\pi)^{d/2}\textup{det}(\mathcal{I}(x)A(x)\mathcal{I}(x)^\top)^{1/2}$, $\mu(x) = m(x)\mathcal{I}(x)b(x)$, and $C(x) =m(x)\mathcal{I}(x)A(x)\mathcal{I}(x)^\top$.  
\end{examp}

Notice that a prototypical kernel is simply an unnormalized multivariate Gaussian in the ambient space.  While a normalized Gaussian would have some advantages which we will remark on below, the normalization factor $m(x)$ is very difficult to determine.  This is because finding $m(x)$ requires computing the determinant of $A(x)$ restricted to each tangent space $T_x\cal M$, and since we are trying to learn the structure of the manifold from the data we do not want to assume that $\cal I$ is known.  Rather than explicitly estimating $m(x)$ in the construction of the kernel, we will instead show that a normalization trick, motivated by the left-normalization first introduced in \cite{diffusion}, allows us to eliminate the influence of $m(x)$.  In fact, we will see that this approach uses the kernel to determine an estimate of $m(x)$, and normalizing by this factor simultaneously removes the influence of $m(x)$ as well as another unwanted term which is higher order.

In order to understand the limiting behavior of local kernels, we first need to generalize the following lemma from \cite{diffusion} which allows the approximation of the integral operator $G_{\epsilon}$ for radial kernels.

\begin{lem}[Expansion of radial kernels, Coifman and Lafon \cite{diffusion}]\label{diffmaplemma} Let $f$ be a smooth real-valued function on an embedded $d$-dimensional manifold $\mathcal{M} \subset \mathbb{R}^n$ and let $h:\mathbb{R}\to\mathbb{R}$ have fast decay, meaning that there exist constants $c,\sigma$ such that $h(a)\leq ce^{-a/\sigma}$ for all $a$. Then
\[ G_{\epsilon}f(x) \equiv \epsilon^{-d/2}\int_{\mathcal{M}} h\left(\frac{||x-y||^2}{\epsilon}\right)f(y)\ dy = m_0 f(x) + \epsilon \frac{m_2}{2}(\omega(x)f(x) + \Delta f(x)) + \mathcal{O}(\epsilon^2) \]
where $m_0 = \int_{\mathbb{R}^d} h(||x||^2)\ dx$ and $m_2  = \int_{\mathbb{R}^d} x_1^2 h(||x||^2)\ dx$ are constants determined by $h$, and $\omega(x)$ depends on the induced geometry of $\mathcal{M}$.  The operator $\Delta$ is the Laplacian operator for $\mathcal{M}$ with the metric induced from the ambient space. 
\end{lem}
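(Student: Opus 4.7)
The plan is to convert the ambient integral over $\mathcal{M}$ into a Gaussian-type integral over the tangent space $T_x\mathcal{M}$, then Taylor expand all ingredients to order $\epsilon$. First, by the exponential decay of $h$, the contribution from $y$ with $\|x-y\|\ge \epsilon^{1/2-\delta}$ is negligible at every polynomial order in $\epsilon$, so we may restrict the integral to a small geodesic neighborhood of $x$ inside the injectivity radius, where $\mathcal{M}$ can be parametrized over its tangent plane at $x$.

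Next, pick an orthonormal basis $\{e_1,\dots,e_d\}$ of $T_x\mathcal{M}$ and complete it to an orthonormal frame of $\mathbb{R}^n$. Parametrize $\mathcal{M}$ near $x$ by $y(s)=x+\sum_i s^i e_i+N(s)$ where $N(s)=\tfrac12 Q(s,s)+O(\|s\|^3)$ and $Q$ is the vector-valued second fundamental form at $x$, taking values in the normal space. Since $s\cdot N(s)=0$, one gets $\|x-y(s)\|^2=\|s\|^2+\tfrac14 \|Q(s,s)\|^2+O(\|s\|^5)$, and the Riemannian volume element is $dy=J(s)\,ds$ with $J(s)=1-\tfrac16\mathrm{Ric}_{ij}(x)s^is^j+O(\|s\|^3)$. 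Substitute $s=\sqrt{\epsilon}\,u$; the $\epsilon^{-d/2}$ prefactor cancels the Jacobian $\epsilon^{d/2}$, and the integrand becomes the product of the three expansions
\[ h(\|u\|^2)+\tfrac{\epsilon}{4}h'(\|u\|^2)\|Q(u,u)\|^2+O(\epsilon^2), \]
\[ f(x)+\sqrt{\epsilon}\,\nabla f(x)\!\cdot\! u+\tfrac{\epsilon}{2}\mathrm{Hess}\,f(x)(u,u)+O(\epsilon^{3/2}), \]
\[ 1-\tfrac{\epsilon}{6}\mathrm{Ric}_{ij}(x)u^iu^j+O(\epsilon^{3/2}), \]
integrated over $u\in T_x\mathcal{M}\cong\mathbb{R}^d$.

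Finally, multiply out and integrate term-by-term. Odd powers of $u$ vanish by rotational symmetry of $h(\|u\|^2)$. The $O(1)$ term yields $m_0 f(x)$, and the $O(\sqrt\epsilon)$ term vanishes. The $O(\epsilon)$ part splits into (i) the Hessian piece $\tfrac12\int u^i u^j h(\|u\|^2)\,du\cdot\partial_i\partial_j f(x)=\tfrac{m_2}{2}\Delta f(x)$, identifying the Laplacian in normal coordinates at $x$; (ii) an extrinsic contribution $\tfrac{f(x)}{4}\int h'(\|u\|^2)\|Q(u,u)\|^2\,du$ coming from the Taylor expansion of $h$; and (iii) an intrinsic contribution $-\tfrac{f(x)}{6}\int h(\|u\|^2)\mathrm{Ric}_{ij}u^iu^j\,du$ from the volume form. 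The \textbf{main obstacle} is item (ii): one reduces $h'$ back to $h$ by integration by parts, $\int u^iu^j h'(\|u\|^2)\,du=-\tfrac12\delta_{ij}\int h(\|u\|^2)\,du$, so that both (ii) and (iii) are proportional to $m_2$ with coefficients given by contractions of the second fundamental form (the squared mean curvature) and the scalar Ricci curvature. Bundling these into a single scalar $\omega(x)$ that depends only on the embedding geometry at $x$ produces the claimed $\tfrac{m_2}{2}\omega(x)f(x)$ correction, and the $O(\epsilon^2)$ remainders are controlled uniformly by the exponential tail bounds on $h$ and $h'$.
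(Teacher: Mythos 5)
Your proof is correct and follows essentially the same route the paper takes for its generalization, Lemma \ref{mainlemma} (note the paper does not actually prove Lemma \ref{diffmaplemma} itself but imports it from \cite{diffusion}): localize via the exponential tail, write $y-x$ as a graph over $T_x\mathcal{M}$, Taylor-expand the kernel, $f$, and the volume form, rescale by $\sqrt{\epsilon}$, and discard the half-integer orders by the parity of the radial kernel. One small slip worth noting: the expansion $J(s)=1-\tfrac16\mathrm{Ric}_{ij}s^is^j+O(\|s\|^3)$ is the volume element in \emph{geodesic normal} coordinates, whereas your distance expansion $\|x-y(s)\|^2=\|s\|^2+\tfrac14\|Q(s,s)\|^2+\cdots$ uses the tangent-plane \emph{graph} parametrization, in which the quadratic correction to $\sqrt{\det g}$ is instead $\tfrac12\sum_i\langle Q(s,e_i),Q(s,e_j)\rangle s^is^j$ (related to Ricci by the Gauss equation); since the lemma leaves $\omega(x)$ unspecified, this mismatch only affects the explicit formula you would extract for $\omega$, not the validity of the conclusion.
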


The next lemma generalizes this result to local kernels.  We introduce the standard notation $\textup{div}$ and $\nabla$ to refer to the intrinsic divergence and gradient operators on the embedded manifold such that $\Delta = \textup{div}\circ \nabla$ is the (negative definite) Laplacian for $\mathcal{M}$ with the induced metric.  Consider a stochastic process on $\cal M$ with drift $\mu$ and diffusion matrix $\sqrt{C}$ written in It\^o form as
\begin{align}\label{SDE} dx = \mu(x)dt + \sqrt{C(x)} dW_t, \end{align}
where $W_t$ is d-dimensional Brownian motion on $\cal M$.  The generator $\cal L$ for \eqref{SDE}, also known as the backward Kolmogorov operator, and its adjoint $\cal L^*$, the Fokker-Planck operator, are given by
\begin{equation} \label{eqScriptL}
 \mathcal{L}f = \mu \cdot \nabla f + \frac{1}{2}C_{ij}\nabla_i \nabla_j f  \hspace{40pt} \mathcal{L}^*f = -\textup{div}(\mu f) + \frac{1}{2} \nabla_j \nabla_i(C_{ij} f), 
 \end{equation}
where $\nabla_i$ is the covariant derivative in the $i$-th direction. The Hessian matrix $\nabla_i\nabla_j f$ and the dot product of the vector fields $\mu$ and $\nabla f$ (where $\nabla$ without subscripts refers to the gradient operator) are taken with respect to the Riemannian metric on $\mathcal{M}$ inherited from the ambient space.  

Later we will be applying local kernels to analyze data sets.  We will not assume that the data are sampled from the system \eqref{SDE}. In fact, there is no requirement that the data are generated by a dynamical system at all.  The system \eqref{SDE} is a Markov process which is implicit to the local kernel construction in the sense that any local kernel with moments $\mu$ and $C$ can be used to construct the operators $\mathcal{L}$ and $\mathcal{L}^*$ that correspond to \eqref{SDE}.  If the data set were generated by the system \eqref{SDE} and the moments $\mu$ and $C$ could be estimated from the data set, then a local kernel could be constructed with these moments to approximate the generator of the data set.  Such an approach was developed recently in \cite{probabilisticEstimates} where the moments are estimated from the data assuming a slow evolution on the manifold.  One application of the theory of local kernels is to show that a large class of kernels can be used to construct the desired operator instead of the standard exponential kernel for which the theory was developed in \cite{Singer2008}. This generalization also applies to related work such as \cite{Kushnir2012,talmon2012,Singer22092009}. The real power of the local kernel construction is the ability to construct the operators $\mathcal{L}$ and $\mathcal{L}^*$ for any system of the form \eqref{SDE} on the manifold defined by the data regardless of how the data is generated, by choosing an appropriate local kernel.

The following lemma connects the asymptotic expansion of the integral operator associated to a local kernel with the generator $\mathcal{L}$.

\begin{lem}[Expansion of local kernels] \label{mainlemma}
Let $f$ be a smooth real-valued function on an embedded $d$-dimensional manifold $\mathcal{M} \subset \mathbb{R}^n$ and let $K(\epsilon,x,y)$ be a local kernel.  Let $m$ denote the zeroth moment of $K$ from (\ref{kerneldefs}), and let $\mathcal{L}$ be defined using the first and second moments of $K$ as in (\ref{eqScriptL}). Then the expansion
\begin{align} G_{\epsilon}f(x) &\equiv \epsilon^{-d/2}\int_{\mathcal{M}} K(\epsilon,x,y)f(y)\ dy \nonumber \\ 
&= m(x) f(x) + \epsilon \left(\omega(x)f(x) + \mathcal{L}f(x) \right) + \Omega(x)\epsilon^{3/2}+ \mathcal{O}(\epsilon^2)
\end{align}
holds, where $\omega(x)$ and $\Omega(x)$ depend on the kernel and the induced metric $g$.
\end{lem}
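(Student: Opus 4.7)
The plan is to generalize the asymptotic expansion of Coifman and Lafon (Lemma \ref{diffmaplemma}) from radial kernels to arbitrary local kernels by passing to geodesic coordinates at $x$ and exploiting the Gaussian domination built into the definition. First I would use the bound $K(\epsilon,x,x+\sqrt{\epsilon}z)\leq c e^{-\sigma\|z-\sqrt{\epsilon}b(x)\|^{2}}$ to localize: outside the geodesic ball $B(x,\epsilon^{1/2-\delta})$ on $\mathcal{M}$ the integrand decays faster than any power of $\epsilon$, so only a neighborhood of $x$ contributes to the expansion up to $\mathcal{O}(\epsilon^{2})$. Inside this ball I would parametrize $\mathcal{M}$ by the Riemannian exponential map $\eta\mapsto\exp_{x}\eta$ with $\eta\in T_{x}\mathcal{M}$, using the embedded expansion $\exp_{x}\eta-x=\eta+\tfrac{1}{2}\mathrm{II}_{x}(\eta,\eta)+\mathcal{O}(\|\eta\|^{3})$, where $\mathrm{II}_{x}$ denotes the second fundamental form of $\mathcal{M}\subset\mathbb{R}^{n}$, and the volume expansion $dV=(1-\tfrac{1}{6}R_{ij}\eta^{i}\eta^{j}+\mathcal{O}(\|\eta\|^{3}))\,d\eta$.

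After the rescaling $\eta=\sqrt{\epsilon}z$, the Jacobian cancels the $\epsilon^{-d/2}$ prefactor exactly, and the integrand becomes the product of $K(\epsilon,x,x+\sqrt{\epsilon}\hat{z}+\tfrac{\epsilon}{2}\mathrm{II}_{x}(\hat z,\hat z)+\cdots)$, the Taylor expansion of $f$ along the embedding,
\[ f(\exp_{x}\eta)=f(x)+\sqrt{\epsilon}\,z^{i}\partial_{i}f(x)+\tfrac{\epsilon}{2}\bigl(z^{i}z^{j}\partial_{i}\partial_{j}f(x)+\mathrm{II}_{x}(\hat z,\hat z)\!\cdot\!\nabla f(x)\bigr)+\mathcal{O}(\epsilon^{3/2}), \]
and the volume correction. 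Termwise integration is justified because the Gaussian bound dominates every polynomial in $z$. Reading off the $z$-integrals from (\ref{kerneldefs}) then identifies the coefficients: the order-$\epsilon^{0}$ piece is $m(x)f(x)$; the order-$\sqrt{\epsilon}$ piece $\sqrt{\epsilon}\,\partial_{i}f\int z^{i}K\,dz$ is actually of order $\epsilon$, because the first-moment definition carries the factor $\epsilon^{-1/2}$, yielding $\epsilon\mu^{i}\partial_{i}f$; and the quadratic term contributes $\tfrac{\epsilon}{2}C_{ij}\partial_{i}\partial_{j}f$.

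The main obstacle is to verify that these pieces, together with the embedding and volume corrections, reassemble into the intrinsic operator $\mathcal{L}f$ of (\ref{eqScriptL}) plus a pure multiple of $f(x)$. In normal coordinates the Christoffel symbols vanish at $x$, so that the intrinsic Hessian differs from the ambient partial derivatives by a second-fundamental-form correction: $\nabla_{i}\nabla_{j}f=\partial_{i}\partial_{j}f-\langle\mathrm{II}_{ij},\nabla_{\mathbb{R}^{n}}f\rangle$ at $x$. Contracting this identity against $C_{ij}$ shows that the $\mathrm{II}_{x}(\hat z,\hat z)\!\cdot\!\nabla f$ contribution from the Taylor expansion of $f$ is exactly what is required to upgrade $\tfrac{1}{2}C_{ij}\partial_{i}\partial_{j}f$ to the covariant $\tfrac{1}{2}C^{ij}\nabla_{i}\nabla_{j}f$, completing $\mathcal{L}f$. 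Whatever remains depends only on $f(x)$ itself: the Ricci piece from the volume form, the $f(x)$-part of the $\mathcal{O}(\epsilon)$ expansion of $K$ in its second slot, and the trace-type interactions between $\mathrm{II}_{x}$ and the second moment combine into a single scalar factor $\omega(x)$. Finally, because $K$ need not be symmetric or drift-free, the odd-order contributions (in particular the cross-term between $\mu$ and $\mathrm{II}_{x}$, and the third-moment integral $\int z^{i}z^{j}z^{k}K\,dz$) do not cancel and give the explicit $\Omega(x)\epsilon^{3/2}$ term, with the Gaussian domination controlling everything beyond as $\mathcal{O}(\epsilon^{2})$.
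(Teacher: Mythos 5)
Your proposal is correct and follows essentially the same route as the paper's proof: localize via the exponential decay, pass to rescaled tangent/geodesic coordinates, Taylor-expand the kernel in its second slot, the function, and the volume element, integrate termwise against the moments, and absorb the curvature corrections into $\omega$ and the odd (third-order) terms into $\Omega$. The only cosmetic difference is that you parametrize $\mathcal{M}$ by the exponential map and track the second fundamental form explicitly, whereas the paper writes $\mathcal{M}$ as a graph over $T_x\mathcal{M}$ and expands $f$ directly in geodesic coordinates so that the covariant Hessian appears without an explicit $\mathrm{II}$ correction; the two bookkeepings agree.
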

\begin{proof} 
Let $x\in \mathcal{M}$, for $0<\gamma<1/2$ and $\epsilon$ sufficiently small, the neighborhood $N_{\epsilon^{\gamma}}(x)$ of radius $\epsilon^{\gamma}$ about $x$ is diffeomorphic to a neighborhood of zero in the tangent space $T_x\mathcal{M}$.  Thus, for any $y \in N_{\epsilon^{\gamma}}(x)$ we can write $y-x = (u,g(u))$ where $u\in T_x\mathcal{M}$ is the orthogonal projection of $y-x$ into $T_x\mathcal{M}$.  Note that setting $u=0$ we have $0=(0,g(0))$ and so $g(0)=0$, and moreover $Dg(0)=0$ since $g$ is tangent to $\mathcal{M}$ at $u=0$.  Thus we have the Taylor expansion $g(u) = p_{x,2}(u) + p_{x,3}(u) + \mathcal{O}(||u||^4)$.  Since $K$ is a local kernel, we can expand the kernel about $\hat u = (u,0)$ as 
\begin{align}\label{kernelexpansion} K(\epsilon,x,y) &= K(\epsilon,x,x+\hat u + (0,g(u)))  \nonumber \\
&= K(\epsilon,x,x+\hat u) + D_yK(\epsilon,x,x+\hat u)^\top(0,g(u))^\top + \left|H_sK(\epsilon,x,x+\hat u)\right|\mathcal{O}(||g(u)||^2) \nonumber \\
&= K(\epsilon,x,x+\hat u) + D_yK(\epsilon,x,x+\hat u)^\top(0,p_{x,2}(u) + p_{x,3}(u))^\top + \left|H_sK(\epsilon,x,x+\hat u)\right|\mathcal{O}(||u||^4) \nonumber \\
&= K(\epsilon,x,x+\hat u) + (\Pi_{u^{\perp}}D_yK(\epsilon,x,x+\hat u))^\top (p_{x,2}(u) + p_{x,3}(u)) + \left|H_sK(\epsilon,x,x+\hat u)\right|\mathcal{O}(||u||^4). 
\end{align}
Following \cite{diffusion} we can expand $f(y) = f(\exp_x(s)) = \tilde f(s)$ when $y\in N_{\epsilon^{\gamma}}(x)$ as
\begin{align}\label{functionexpansion} f(y) = \tilde f(0) +  u^\top D_s \tilde f(0) + \frac{1}{2} u^\top H_s \tilde f(0)  u + p_{x,3}(u) + \mathcal{O}(||u||^4).\end{align} 

Combining \eqref{kernelexpansion} and \eqref{functionexpansion} we have the following expansion for the product:
\begin{align}\label{prodexp} K(\epsilon,x,y)f(y) &= \tilde f(0)\left(K(\epsilon,x,x+\hat u) + \Pi_{u^{\perp}}D_yK(\epsilon,x,x+\hat u)^\top p_{x,2,3}(u) \right) \nonumber \\
&\hspace{10pt}+ K(\epsilon,x,x+\hat u)\left[ u^\top D_s\tilde f(0) + \frac{1}{2} u^\top H_s\tilde f(0)  u  + p_{x,3}(u) \right] \nonumber\\
&\hspace{10pt}+ \left(K(\epsilon,x,x+\hat u) + \left|H_sK(\epsilon,x,x+\hat u)\right|\right)\mathcal{O}(||u||^4)
\end{align}
where all homogeneous polynomials of degree 2 and 3 in the variable $u$ are combined into the single term $p_{x,2,3}(u)$.  We want to use this expansion inside the integral operator $G_{\epsilon}f(x) = \epsilon^{-d/2}\int_{\mathcal{M}} K(\epsilon,x,y)f(y)\ dy$, so we localize this integral to $y\in N_{\epsilon^{\gamma}}(x)$.  The residual integral is therefore
\[ \left|\epsilon^{-d/2}\int_{||y-x||>\epsilon^{\gamma}}K(\epsilon,x,y)f(y)\ dy\right| = \left|\int_{||\tilde y-x||>\epsilon^{\gamma-1/2}}K(\epsilon,x,\sqrt{\epsilon}(\tilde y-x)+x)f(\sqrt{\epsilon}(\tilde y-x)+x) d\tilde y \right| \leq ||f||_{\infty}\mathcal{O}(\epsilon^{2}), \]
where we have changed variables to $y = \sqrt{\epsilon}(\tilde y-x)+x$ and used the exponential decay in the tails of $K(\epsilon,x,\sqrt{\epsilon}(\tilde y-x)+x)$.  Note that $\gamma<1/2$ meaning that $\epsilon^{\gamma-1/2}\to\infty$ as $\epsilon\to 0$ and therefore the integral is only over the tail of the kernel.  In fact, the integral of the exponential tail shrinks faster than any polynomial in $\epsilon$, so in particular it is less than $\mathcal{O}(\epsilon^2)$.

Thus we have the following expansion for the integral operator:
\begin{align} G_{\epsilon}f(x) &= \epsilon^{-d/2}\int_{\mathcal{M}} K(\epsilon,x,y)f(y) \ dy = \epsilon^{-d/2}\int_{||y-x||<\epsilon^{\gamma}} K(\epsilon,x,y)f(y) \ dy \nonumber \\
&= \epsilon^{-d/2}\int_{||u||<\epsilon^{\gamma}} \tilde f(0)\left(K(\epsilon,x,x+\hat u) + \Pi_{u^{\perp}}D_yK(\epsilon,x,x+\hat u)^\top p_{x,2,3}(u) \right) (1+p_{x,2,3}(u) + \mathcal{O}(\epsilon^2))du \nonumber \\
&\hspace{10pt}+\epsilon^{-d/2}\int_{||u||<\epsilon^{\gamma}} K(\epsilon,x,x+\hat u)\left[ u^\top D_s\tilde f(0) + \frac{1}{2} u^\top H_s\tilde f(0)  u  + p_{x,3}(u) \right] (1+p_{x,2,3}(u) + \mathcal{O}(\epsilon^2))du \nonumber \\
&\hspace{10pt}+\epsilon^{-d/2}\int_{||u||<\epsilon^{\gamma}} \left(K(\epsilon,x,x+\hat u) + \left|H_sK(\epsilon,x,x+\hat u)\right|\right)\mathcal{O}(||u||^4)(1+p_{x,2,3}(u) + \mathcal{O}(\epsilon^2))du \nonumber \\
&= \int_{||z||<\epsilon^{\gamma-1/2}} \tilde f(0)\left(K(\epsilon,x,x+\sqrt{\epsilon}\hat z) + \Pi_{u^{\perp}}D_yK(\epsilon,x,x+\sqrt{\epsilon}\hat z)^\top (\epsilon p_{x,2}(z)+\epsilon^{3/2}p_{x,3}(z)) \right) (1+\epsilon p_{x,2}(z)+\epsilon^{3/2}p_{x,3}(z) + \mathcal{O}(\epsilon^2))dz \nonumber \\
&\hspace{10pt}+\int_{||u||<\epsilon^{\gamma}} K(\epsilon,x,x+\sqrt{\epsilon}z)\left[\sqrt{\epsilon} z^\top D_s\tilde f(0) + \frac{\epsilon}{2} z^\top H_s\tilde f(0)z  +\epsilon^{3/2} p_{x,3}(z) \right] (1+\epsilon p_{x,2}(z)+\epsilon^{3/2} p_{x,3}(z) + \mathcal{O}(\epsilon^2))dz \nonumber \\
&\hspace{10pt}+\epsilon^2 \int_{||z||<\epsilon^{\gamma-1/2}} \left(K(\epsilon,x,x+\sqrt{\epsilon}\hat z) + \left|H_sK(\epsilon,x,\sqrt{\epsilon}\hat z)\right|\right)\mathcal{O}(||z||^4)(1+\epsilon p_{x,2}(z)+\epsilon^{3/2}p_{x,3}(z) + \mathcal{O}(\epsilon^2))dz \nonumber
\end{align}
where we use the fact \cite{diffusion} that $\textup{det}\left(\frac{dy}{du}\right) = 1 + p_{x,2,3}(u) + \mathcal{O}(\epsilon^2)$ to change variables from $y$ to $u$; and then we change to $z = \epsilon^{-1/2}u$ so that $\textup{det}\left(\frac{du}{dz}\right) = \epsilon^{d/2}$ and we set $\hat z = (0,z)^\top$.  We now use the exponential decay of the kernel and its first two derivatives to extend the integrals to the entire tangent space.  Note that any polynomial integrated against the kernels will be a constant, yielding
\begin{align} G_{\epsilon}f(x) &= \int_{T_x\mathcal{M}} \tilde f(0)\left(K(\epsilon,x,x+\sqrt{\epsilon}\hat z)(1+\epsilon p_{x,2}(z)+\epsilon^{3/2}p_{x,3}(z)) + \Pi_{u^{\perp}}D_yK(\epsilon,x,x+\sqrt{\epsilon}\hat z)^\top (\epsilon p_{x,2}(z)+\epsilon^{3/2}p_{x,3}(z)) \right)dz \nonumber \\
&\hspace{10pt}+\int_{T_x\mathcal{M}} K(\epsilon,x,x+\sqrt{\epsilon}\hat z)\left[\sqrt{\epsilon} z^\top D_s\tilde f(0) + \frac{\epsilon}{2} z^\top H_s\tilde f(0)z  +\epsilon^{3/2} p_{x,3}(z) \right] (1+\epsilon p_{x,2}(z) )dz + \mathcal{O}(\epsilon^2) \nonumber \\
&= f(x) \int_{T_x\mathcal{M}}K(\epsilon,x,x+\sqrt{\epsilon}\hat z)dz  +\sqrt{\epsilon} \left(\sum_{i=1}^d \frac{\partial \tilde f}{\partial s_i}(0) \int_{T_x\mathcal{M}} z_i K(\epsilon,x,x+\sqrt{\epsilon}\hat z) dz \right) \nonumber \\
&\hspace{10pt}+\epsilon \left(\sum_{i,j=1}^d \frac{\partial^2 \tilde f}{\partial s_j \partial s_i}(0)\int_{T_x\mathcal{M}} \frac{1}{2}z_i z_j K(\epsilon,x,x+\sqrt{\epsilon}\hat z) dz  +  f(x) \int_{T_x\mathcal{M}} K(\epsilon,x,x+\sqrt{\epsilon}\hat z) p_{x,2}(z) +\Pi_{u^{\perp}}D_yK(\epsilon,x,x+\sqrt{\epsilon}\hat z)^\top p_{x,2}(z) dz \right)\nonumber \\
&\hspace{10pt}+\epsilon^{3/2} \left( \int_{T_x\mathcal{M}} K(\epsilon,x,x+\sqrt{\epsilon}\hat z)p_{x,3}(z)+\Pi_{u^{\perp}}D_yK(\epsilon,x,x+\sqrt{\epsilon}\hat z)^\top p_{x,3}(z) dz \right)+ \mathcal{O}(\epsilon^2) \nonumber
\end{align}
We now define the terms
\begin{align}\label{kerneldefs2} 
\omega(x) &\equiv \lim_{\epsilon\to 0} \int_{T_x\mathcal{M}} K(\epsilon,x,x+\sqrt{\epsilon}\hat z) p_{x,2}(z) +\Pi_{u^{\perp}}D_yK(\epsilon,x,x+\sqrt{\epsilon}\hat z)^\top p_{x,2}(z)\, dz, \nonumber \\
\Omega(x) &\equiv \lim_{\epsilon\to 0} \int_{T_x\mathcal{M}} K(\epsilon,x,x+\sqrt{\epsilon}\hat z)p_{x,3}(z)+\Pi_{u^{\perp}}D_yK(\epsilon,x,x+\sqrt{\epsilon}\hat z)^\top p_{x,3}(z) \,dz.
\end{align}
Combining the definitions of \eqref{kerneldefs} and \eqref{kerneldefs2} with the expansion of $G_{\epsilon}$ yields
\begin{align} \int_{\mathcal{M}} K(\epsilon,x,y)f(y) \,dy = m(x) f(x) + \epsilon \left(\omega(x)f(x) +\sum_i \mu_i(x) \frac{\partial \tilde f}{\partial s_i}(0) + \frac{1}{2}\sum_{ij} C_{ij}(x) \frac{\partial^2 \tilde f}{\partial s_i \partial s_j}(0) \right) + \Omega(x)\epsilon^{3/2}+ \mathcal{O}(\epsilon^2).
\end{align}
Note that writing $f$ in geodesic coordinates based at the point $x$, the gradient operator at $x$ becomes $\nabla f(x) = g^{jl}\frac{\partial \tilde f}{\partial s_l}(0) dx_j$ so that the inner product becomes 
\[ \mu \cdot \nabla f = \sum_{ij}g_{ij}\mu_i (\nabla f)_j = \sum_{ij} g_{ij}\mu_i g^{jl}\frac{\partial \tilde f}{\partial s_l}(0) = \sum_i \mu_i\frac{\partial \tilde f}{\partial s_i}(0), \]
since $\sum_j g_{ij}g^{jl} = \delta_{il}$.  Since $\mathcal{L}$ is written in local coordinates as $\mathcal{L}f(x) = \sum_i \mu_i(x) \frac{\partial \tilde f}{\partial s_i}(0) + \frac{1}{2} \sum_{ij} C_{ij}(x) \frac{\partial^2 \tilde f}{\partial s_i \partial s_j}(0)$, we have shown
\[ \epsilon^{-d/2}\int_{\mathcal{M}} K(\epsilon,x,y)f(y) \,dy = m(x) f(x) + \epsilon \left(\omega(x)f(x) + \mathcal{L}f(x) \right) + \Omega(x)\epsilon^{3/2}+ \mathcal{O}(\epsilon^2), \]
as desired.  Notice that neglecting the $\Omega$ term, the expansion is of order $\epsilon^{3/2}$. However, if the kernel and its derivative have zero skewness then $\Omega=0$ and the expansion is of order $\epsilon^2$.  
\end{proof}

Notice that the polynomials in the definition of $\Omega$ in \eqref{kerneldefs2} involve $f$ and mixed third derivatives of $f$, so in general these terms will be difficult to cancel with any type of normalization.  We therefore introduce the following definition.
\begin{Def}[Skew-free local kernel] \rm A local kernel is called \emph{skew-free} if for any homogeneous polynomial of order-3 in the variable $z$ (with coefficients depending on $x$), we have $\lim_{\epsilon\to 0} \int_{T_x\mathcal{M}} p_{x,3}(z)K(\epsilon,x,x+\sqrt{\epsilon}z)dz = 0$ and $\lim_{\epsilon\to 0} \int_{T_x\mathcal{M}} p_{x,3}(z)D_yK(\epsilon,x,x+\sqrt{\epsilon}z)dz = 0$.
\end{Def}
For the remainder of the paper we will restrict our attention to skew-free local kernels so that $\Omega=0$ and the expansion in Lemma \ref{mainlemma} is order $\epsilon^2$.  The results which follow will still apply for local kernels which are not skew-free, however the expansions will only be valid up to order $\epsilon^{3/2}$ rather than order $\epsilon^2$.  Notice that any operator which can be recovered with a local kernel can be recovered with a prototypical local kernel, which is skew-free.  Thus, in the limit of large data, there is no reason to use a local kernel which is not skew-free.  

From Lemma \ref{mainlemma} we can easily derive the expansion for the adjoint of the kernel which we define by $K^*(\epsilon,x,y) = K(\epsilon,y,x)$ with associated operator $G^*_{\epsilon}f(x) = \epsilon^{-d/2} \int_{\mathcal{M}} K(\epsilon,y,x)f(y)dy$.

\begin{lem}[Expansion of adjoint of skew-free local kernel]\label{transposelemma} Let $K$ be a skew-free local kernel. Under the same assumptions as Lemma \ref{mainlemma},
\begin{align}\label{transposeexpansion} G^*_{\epsilon}f(x) &\equiv \epsilon^{-d/2}\int_{\mathcal{M}} K(\epsilon,y,x)f(y)dy = m(x) f(x) + \epsilon \left(\omega(x)f(x) + \mathcal{L}^*f(x) \right) + \mathcal{O}(\epsilon^2).
\end{align}

\end{lem}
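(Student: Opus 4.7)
The plan is to reduce Lemma \ref{transposelemma} to Lemma \ref{mainlemma} by a duality argument against smooth test functions, which avoids redoing the lengthy Taylor-expansion bookkeeping. For any smooth $\phi$ on the compact manifold $\mathcal{M}$, Fubini's theorem gives
\[
\int_{\mathcal{M}} \phi(x)\, G^*_\epsilon f(x)\, dx \;=\; \epsilon^{-d/2}\int_{\mathcal{M}}\!\int_{\mathcal{M}} K(\epsilon,y,x)\phi(x)\,dx\,f(y)\,dy \;=\; \int_{\mathcal{M}} f(y)\, G_\epsilon \phi(y)\,dy.
\]
Applying Lemma \ref{mainlemma} to $G_\epsilon\phi$ (whose error is uniform on compact $\mathcal{M}$) and then invoking the integration-by-parts identity $\int f\,\mathcal{L}\phi\,dV = \int \phi\,\mathcal{L}^* f\,dV$, which holds on a closed manifold by the very definition of $\mathcal{L}^*$ in \eqref{eqScriptL}, and using self-adjointness of multiplication by $m$ and $\omega$, yields
\[
\int_{\mathcal{M}} \phi(x)\, G^*_\epsilon f(x)\, dx \;=\; \int_{\mathcal{M}} \phi(x)\bigl(m(x)f(x) + \epsilon\omega(x)f(x) + \epsilon\mathcal{L}^*f(x)\bigr)\,dx \;+\; \mathcal{O}(\epsilon^2).
\]
Since $\phi$ ranges over a dense subspace of $L^2(\mathcal{M},dV)$, this identifies $G^*_\epsilon f$ with the claimed expansion up to an $L^2$ error of $\mathcal{O}(\epsilon^2)$.

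To upgrade this weak identity to the pointwise statement in \eqref{transposeexpansion}, I would observe that both $G^*_\epsilon f$ and the candidate right-hand side are smooth functions of $x$, since $K$ is smooth in its arguments, $f$ is smooth, and $\mathcal{M}$ is compact; combined with the fact that the error estimate in Lemma \ref{mainlemma} is uniform in the base point, the pointwise error inherits the same $\mathcal{O}(\epsilon^2)$ control. Equivalently, one could retrace the proof of Lemma \ref{mainlemma} verbatim, now localizing $y$ to a small geodesic neighborhood of $x$, writing $y = x + \hat u + (0,g(u))$, and Taylor-expanding $K(\epsilon,y,x)$ in its first rather than second argument. The new moment integrals produce the same zeroth and second moments near the diagonal, while derivatives of $K$ in the first argument translate, after an integration by parts in $y$, into the divergence-form derivatives characteristic of $\mathcal{L}^*$.

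The main obstacle is the bookkeeping required to promote the weak identity to a pointwise one with the same $\mathcal{O}(\epsilon^2)$ error, and to verify that the coefficients $m(x)$ and $\omega(x)$ are genuinely unchanged when the two arguments of $K$ are interchanged. The duality derivation effectively forces this agreement: any discrepancy between the ``reversed'' moments and the original ones must be absorbed into the difference between the naive operator associated with $K^*$ and the true adjoint $\mathcal{L}^*$. Since the adjoint identity is an algebraic consequence of integration by parts on a closed manifold, this absorption is automatic, and the argument is routine modulo the uniformity of the underlying expansion in Lemma \ref{mainlemma}.
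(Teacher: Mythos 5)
Your duality argument---pairing $G^*_\epsilon f$ against a smooth test function, applying Fubini and Lemma \ref{mainlemma} to $G_\epsilon\phi$, and invoking $\langle f,\mathcal{L}\phi\rangle = \langle \mathcal{L}^*f,\phi\rangle$---is precisely the proof the paper gives, which likewise works entirely in the weak formulation. Your closing remarks on upgrading the weak identity to the pointwise statement address a step the paper's proof actually leaves implicit (it concludes only ``in the weak sense''), but the essential approach is the same.
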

\begin{proof} We describe the operator $G^*_{\epsilon}f(x)$ in the weak formulation by letting $h$ be an arbitrary smooth test function so that
\[ \left<h,G^*_{\epsilon}f\right>_{L^2(\mathcal{M})} = \int_{\mathcal{M}}\int_{\mathcal{M}} h(x)K(\epsilon,y,x)f(y)\ dy dx. \]
We will expand this inner product by changing the order of integration, and noting that $\int_{\mathcal{M}} K(\epsilon,y,x)h(x)\ dx = G_{\epsilon}h(y)$,
\begin{align} \left<h,G^*_{\epsilon}f\right>_{L^2(\mathcal{M})} &= \int_{\mathcal{M}} f(y)G_{\epsilon}h(y) dy \nonumber \\
&= \int_{\mathcal{M}} f(y)\left(m(y)h(y) + \epsilon(\omega(y)h(y) + \mathcal{L}h(y))\right)dy + \mathcal{O}(\epsilon^2) \nonumber \\
&= \int_{\mathcal{M}} m(y)h(y)f(y) + \epsilon(\omega(y)h(y)f(y) + f(y)\mathcal{L}h(y)) dy + \mathcal{O}(\epsilon^2) \nonumber \\
&= \left<h,f + \epsilon(\omega f + \mathcal{L}^*f)\right> + \mathcal{O}(\epsilon^2),
\end{align}
where we have used the fact that $\left<f,\mathcal{L}h\right> = \left<\mathcal{L}^*f,h\right>$ in order to factor out $g(y)$ from each term in the last equality.  The above computation shows that in the weak sense we have $G^*_{\epsilon}f = f + \epsilon(\omega f + \mathcal{L}^*f) + \mathcal{O}(\epsilon^2)$.
\end{proof}

Since we are typically interested in the operator $\mathcal{L}$, we note that $\mathcal{L}1 = 0$, which means that if we apply the kernel operator to the constant function, we find $(G_{\epsilon}1)(x) = m(x) + \epsilon \omega(x) + \mathcal{O}(\epsilon^2)$.  So $G_{\epsilon}1$ isolates all the unwanted terms in the expansion of $G_{\epsilon}$, including the aforementioned zeroth moment $m(x)$, which is now estimated by the kernel operator $G_{\epsilon}1$, so that it does not need to be known in order to define the kernel.  The following theorem normalizes the operator by dividing by $G_{\epsilon}1$ in order to isolate $\mathcal{L}$.

\begin{thm}\label{localkerneltheorem}
Let $K(\epsilon,x,y)$ be a local kernel and set
\begin{align}\label{approxL} L_{\epsilon}f = \frac{(G_{\epsilon}1)^{-1}G_\epsilon f - \textup{Id}(f)}{\epsilon}, \hspace{50pt} L^*_{\epsilon}f = \frac{(G_{\epsilon}1)^{-1}G^*_\epsilon f - \textup{Id}(f)}{\epsilon}. \end{align}
Then ${\displaystyle \lim_{\epsilon \to 0} L_{\epsilon} = \frac{1}{m} \mathcal{L}}$ and ${\displaystyle \lim_{\epsilon \to 0} L^*_{\epsilon} = \frac{1}{m} \mathcal{L}^*}$, where $\mathcal{L}$ and $\mathcal{L}^*$ are defined in (\ref{eqScriptL}).
\end{thm}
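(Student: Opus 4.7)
The plan is to apply Lemma \ref{mainlemma} twice: once to the test function $f$, and once to the constant function $\mathbf{1}$. The second application is the key insight: since $\mathcal{L}\mathbf{1} = 0$ (the generator in \eqref{eqScriptL} annihilates constants because $\nabla \mathbf{1} = 0$ and $\nabla_i\nabla_j \mathbf{1} = 0$), the lemma collapses to
\[ G_\epsilon \mathbf{1}(x) = m(x) + \epsilon\,\omega(x) + \mathcal{O}(\epsilon^2). \]
So $G_\epsilon \mathbf{1}$ is a data-driven estimate of precisely the unwanted prefactor $m + \epsilon\omega$ that obstructs extraction of $\mathcal{L}$ from $G_\epsilon f$, and dividing by it will cancel both terms simultaneously without needing $m(x)$ a priori.

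Next I would invert this expansion as a geometric series. Since $m(x)>0$,
\[ (G_\epsilon \mathbf{1}(x))^{-1} = \frac{1}{m(x)}\left(1 - \frac{\epsilon\,\omega(x)}{m(x)} + \mathcal{O}(\epsilon^2)\right). \]
Multiplying by the Lemma \ref{mainlemma} expansion $G_\epsilon f(x) = m(x) f(x) + \epsilon(\omega(x) f(x) + \mathcal{L}f(x)) + \mathcal{O}(\epsilon^2)$ and collecting powers of $\epsilon$, the $\omega(x)f(x)$ contribution from $G_\epsilon f$ is cancelled by the $-\epsilon\omega(x)/m(x)$ contribution from the inversion times the leading $m(x)f(x)$, leaving
\[ (G_\epsilon \mathbf{1})^{-1}G_\epsilon f = f(x) + \frac{\epsilon}{m(x)}\mathcal{L}f(x) + \mathcal{O}(\epsilon^2). \]
Subtracting $f$, dividing by $\epsilon$, and sending $\epsilon \to 0$ yields $L_\epsilon f \to \frac{1}{m}\mathcal{L}f$ pointwise, as claimed.

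For the adjoint statement I would repeat the same argument, replacing Lemma \ref{mainlemma} by Lemma \ref{transposelemma}. The crucial observation is that $G^*_\epsilon f$ shares exactly the same $m(x)f(x)+\epsilon\omega(x)f(x)$ leading terms as $G_\epsilon f$, differing only in the differential operator that appears at order $\epsilon$. Consequently, normalizing by $G_\epsilon \mathbf{1}$ (rather than by $G^*_\epsilon \mathbf{1}$, which would contain an unwanted $\epsilon\,\mathcal{L}^*\mathbf{1}$ term since $\mathcal{L}^*\mathbf{1} = -\operatorname{div}\mu + \tfrac{1}{2}\nabla_j\nabla_i C_{ij} \neq 0$ in general) is exactly the right choice, and the identical algebra produces $L^*_\epsilon f \to \frac{1}{m}\mathcal{L}^*f$.

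The only place requiring care is verifying that the $\omega$ cancellation really goes through — i.e., that the $\omega(x)$ appearing in the expansion of $G_\epsilon\mathbf{1}$ is literally the same function as the $\omega(x)$ appearing in the expansion of $G_\epsilon f$. This is automatic because $\omega(x)$ was defined in \eqref{kerneldefs2} intrinsically in terms of the kernel $K$ and the embedding, independent of $f$, so Lemma \ref{mainlemma} applies with the same $\omega$ to both $f$ and $\mathbf{1}$. Under the skew-free assumption maintained in the paper the $\Omega\epsilon^{3/2}$ remainder vanishes and the convergence rate is $\mathcal{O}(\epsilon)$; without skew-freeness the limit still exists, just with an $\mathcal{O}(\epsilon^{1/2})$ correction.
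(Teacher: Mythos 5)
Your proposal is correct and follows essentially the same route the paper takes: expand $G_\epsilon f$ and $G_\epsilon \mathbf{1}$ via Lemma \ref{mainlemma}, use $\mathcal{L}\mathbf{1}=0$ so that $G_\epsilon\mathbf{1}=m+\epsilon\omega+\mathcal{O}(\epsilon^2)$ cancels both unwanted terms upon division, and handle the adjoint identically via Lemma \ref{transposelemma} while normalizing by $G_\epsilon\mathbf{1}$ rather than $G^*_\epsilon\mathbf{1}$ (exactly the point the paper makes in the remark following the theorem). Your closing observations about the $f$-independence of $\omega$ and the $\Omega\epsilon^{3/2}$ term contributing only $\mathcal{O}(\epsilon^{1/2})$ after division are also consistent with the paper.
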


It is crucial in Theorem \ref{localkerneltheorem} that both the kernel and the adjoint are normalized by $G_{\epsilon}1$.  This is because $-\textup{div}(\mu f) = -f\textup{div}(\mu) - \mu \cdot \nabla f$ implies that $G_{\epsilon}^*1(x) = 1+\epsilon(\omega(x) - \textup{div}(\mu))$. Dividing by this term would introduce an unwanted term to the operator.  

The normalization \eqref{approxL} was first introduced in \cite{diffusion}, and it has the significant advantage that the zeroth moment $m(x)$ of the kernel does not need to be known when the kernel is defined.  For the prototypical kernel in Definition \ref{prototype}, for example, since we did not normalize the Gaussian, we have $\mu(x) = m(x)b(x)$ and $C(x) = m(x)A(x)$ and therefore
\[ \frac{1}{m}\mathcal{L}f = \frac{1}{m}\left( m b\cdot \nabla f + m A_{ij}\nabla_i \nabla_j f \right) =  b\cdot \nabla f + A_{ij}\nabla_i \nabla_j f. \]
Since the norm in the prototypical kernel is defined in terms of $A$ and $b$, we typically do not wish the normalization factor (which is difficult to estimate before the kernel is defined, since the determinant must be computed on the tangent space) to affect the operator.  The normalization \eqref{approxL} lets us avoid the normalization factor altogether.  However, for the prototypical kernel the formula for $L_{\epsilon}^*$ becomes more complicated.  In this case it is more natural to define
\[ \hat L_{\epsilon}^* = \frac{(G_{\epsilon}^*((G_{\epsilon}1)^{-1}f) - \textup{Id}}{\epsilon}, \]
which is equivalent to first normalizing the kernel matrix and then computing the transpose.  It is easy to verify that for a prototypical kernel we have
\[ \hat L_{\epsilon}^*f = -\textup{div}(b f) +\nabla_i \nabla_j ( A_{ij}f ) + \mathcal{O}(\epsilon), \]
and we will use this fact in Section \ref{markovexample}.

We note that another normalization option is to subtract the unwanted terms so that
\begin{align}\label{subtractionKernel} \frac{1}{\epsilon}\left(G_{\epsilon}f - f G_{\epsilon}1\right) = \mathcal{L}f + \mathcal{O}(\epsilon). \end{align}
The normalization \eqref{subtractionKernel} was used in \cite{BN} and has the advantage of discretizing as a weighted graph Laplacian, which is an unbiased estimator of the continuous operator. When the kernel is symmetric the estimator will also be a symmetric matrix.  However, this approach would require the kernel to be normalized by dividing the kernel by the zeroth moment $m(x)$.  In most applications this is not known a priori and the normalization by the empirical estimate $G_{\epsilon}1$ as in \eqref{approxL} is a more practical approach.

\subsection{Numerical example}\label{markovexample}

Due to the complexity of the previous derivations, and their importance in the subsequent sections, we will give a numerical example demonstrating and validating the theory developed so far.  Consider a flat torus isometrically embedded in $\mathbb{R}^4$.  This example allows easy computation of the covariant derivatives and adherence to the uniform sampling assumptions. Start with a uniform grid of $10000$ points $(\theta_i,\phi_i)$ in the flat torus $[0,2\pi]^2$, and map these points into $\mathbb{R}^4$ via the isometric embedding $x_i = (\sin(\theta_i),\cos(\theta_i),\sin(\phi_i),\cos(\phi_i))^\top$.  In order to verify the theory for a non-homogeneous, anisotropic kernel, we will design a prototypical local kernel with the moments
\[ \mu(\theta,\phi) = (2+\sin(\theta),0)^\top \hspace{50pt} C(\theta,\phi) = \left[\begin{array}{cc} 3+\sin(\phi) & 1 \\ 1 & 1 \end{array} \right]. \]
To build a kernel on the embedded torus, we must lift these two dimensional tensors into $\mathbb{R}^4$. Let $D\iota$ be the matrix with rows given by the tangent vectors
\[ D\iota(\theta,\phi) = \left[\begin{array}{cccc} \cos(\theta) & -\sin(\theta) & 0 & 0 \\ 0 & 0 & \cos(\phi) & -\sin(\phi) \end{array} \right]. \]
Abbreviating $D\iota_i = D\iota(\theta_i,\phi_i)$, $\mu_i = \mu(\theta_i,\phi_i)$, and $C_i = C(\theta_i,\phi_i)$, we can define the prototypical local kernel
\[ K(\epsilon,x_i,x_j) = \exp\left(-\frac{(x_j-x_i-\epsilon D\iota_i \mu_i)^\top D\iota_i C_i D\iota_i^\top (x_j-x_i-\epsilon D\iota_i \mu_i) }{2\epsilon} \right). \]
While this construction is quite artificial, it is only for the purposes of numerical verification.  Indeed, as we will show in Section \ref{mainresultex}, the real power of local kernels is the ability to build a data-driven kernel where the moments are naturally constructed from the data itself.  

 \begin{figure}[h]
  \begin{center}
\subfigure[]{  \includegraphics[height=.3\linewidth]{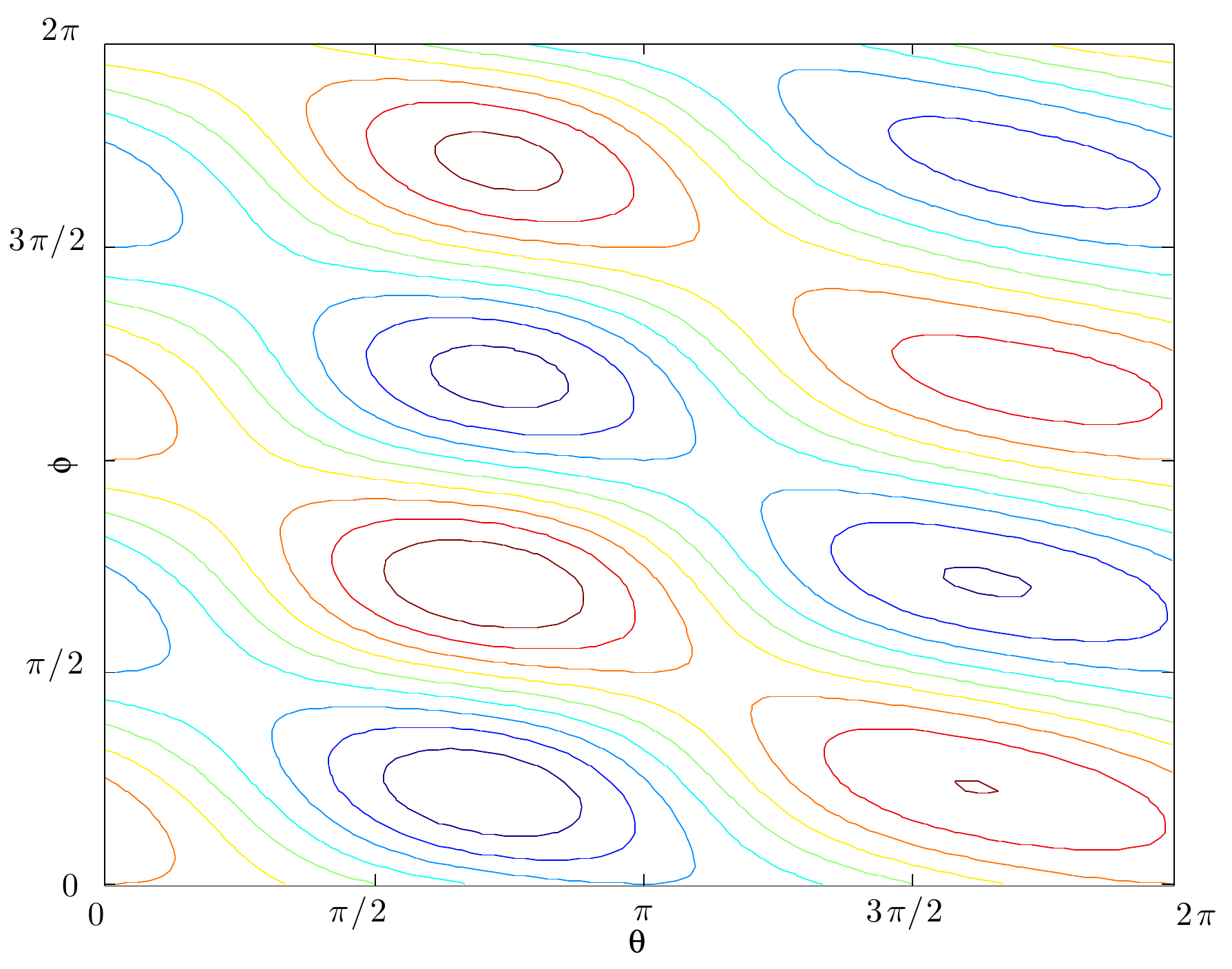}}
\subfigure[]{    \includegraphics[height=.3\linewidth]{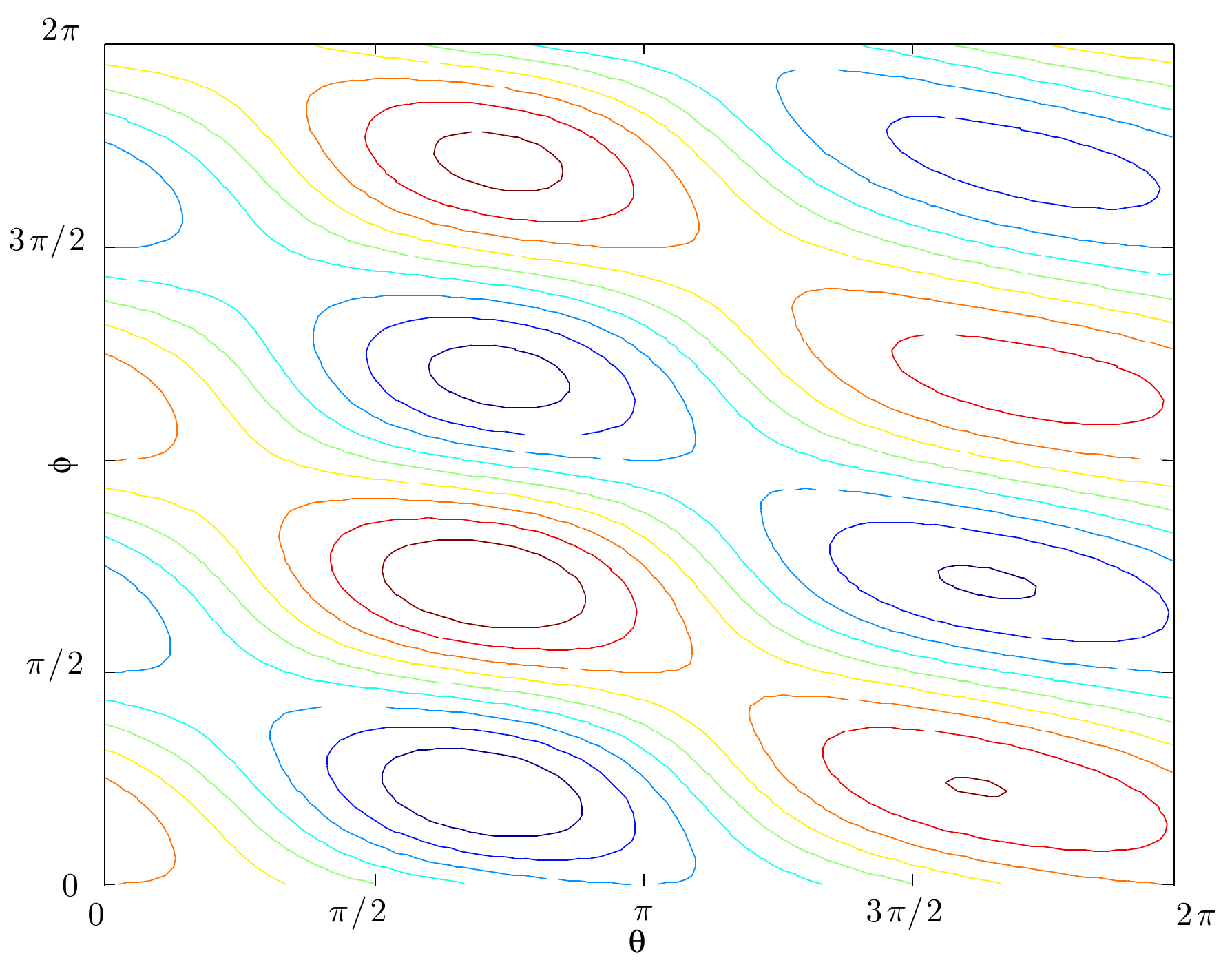}}\\
\subfigure[]{      \includegraphics[height=.3\linewidth]{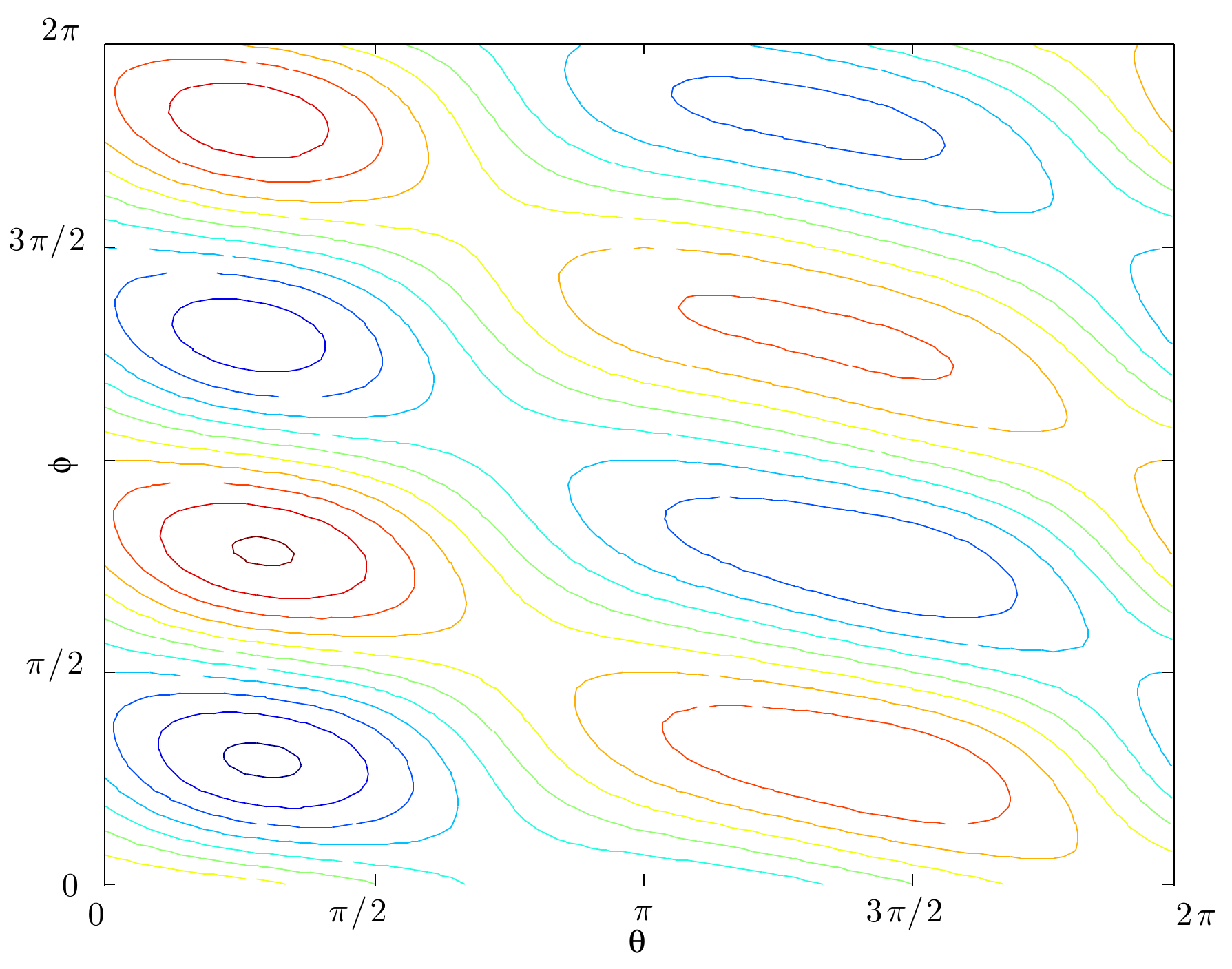}}
\subfigure[]{    \includegraphics[height=.3\linewidth]{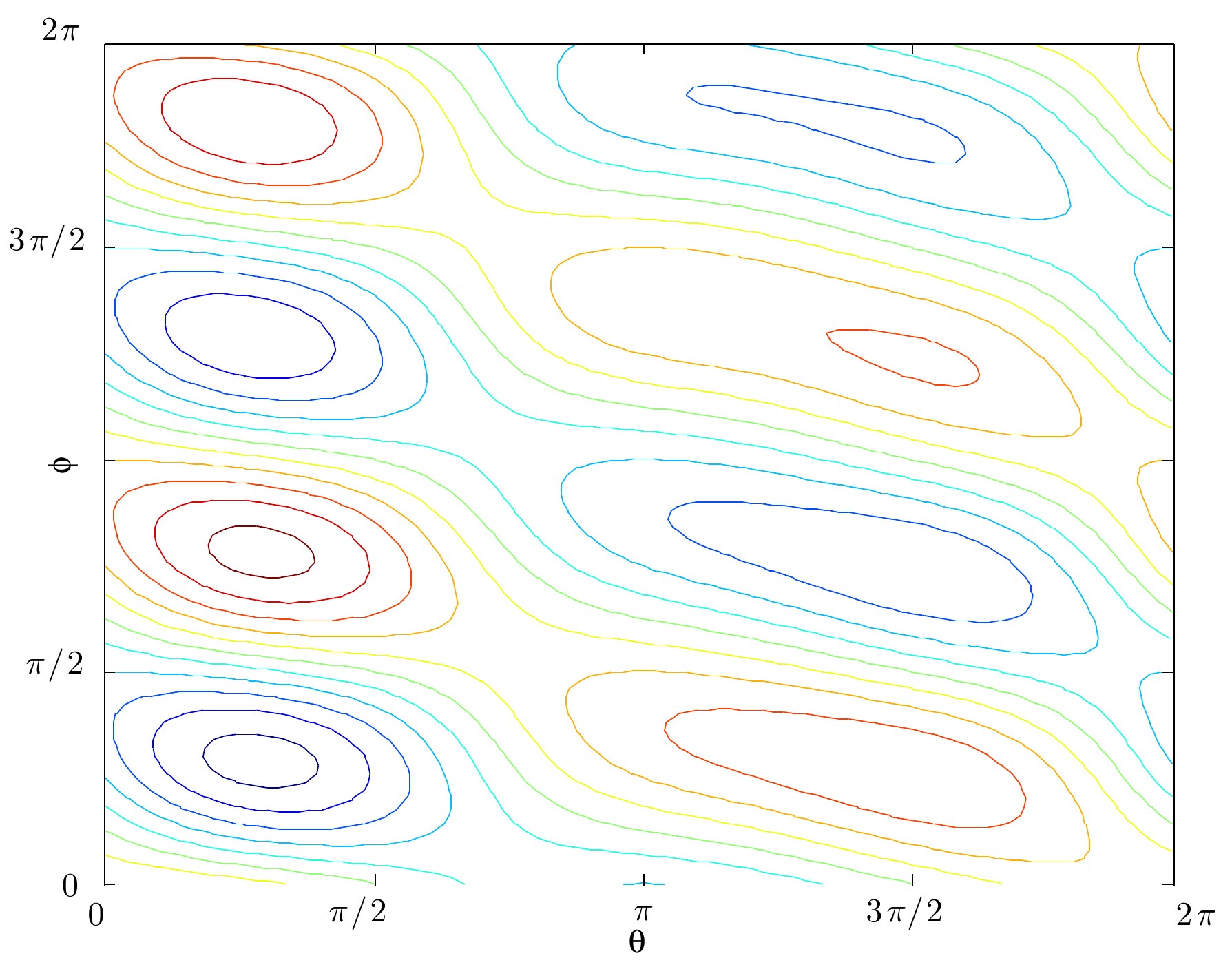}}
\caption{\label{FPBK} (a) The analytic $\mathcal{L}f$ shown as a contour plot as a function of $\theta$ and $\phi$. (b) Numerical estimate $L_{\epsilon}f$ using the local kernel evaluated on $10000$ points on a uniform grid on the flat torus in $\mathbb{R}^4$ with $\epsilon=0.001$ (right). (c) Analytic $\mathcal{L}^*f$ and (d)  $\hat{L}^*_{\epsilon}f$. Note that the color indicates the functional value and the left and right plots are drawn in the same scale.}
\end{center}
\end{figure}

We will validate Theorem \ref{localkerneltheorem} by constructing $L_{\epsilon}$ and $\hat L_{\epsilon}^*$ and applying them to the function $f(\theta,\phi) = \sin\theta\sin 2\phi$.  We first compute the analytic result $\mathcal{L}f$ and $\mathcal{L}^*f$.  Note that because the torus is flat and $x^1 = \theta$ and $x^2 = \phi$ give global coordinates, we can perform all operations with respect to these coordinates. In particular, the covariant derivatives are simply those with respect to $\theta$ and $\phi$ respectively.  Using these facts we compute
\begin{align} \mathcal{L}f &= \mu \cdot \nabla f + \sum_{l,r} C_{lr}\frac{d^2 f}{dx^l dx^r} \nonumber \\
&= (2+\sin(\theta),0)\left(\frac{\partial f}{\partial \theta},\frac{\partial f}{\partial \phi}\right)^\top + (3+\sin(\phi))\frac{\partial^2 f}{\partial \theta^2} + 2 \frac{\partial^2 f}{\partial \theta \partial \phi} + \frac{\partial^2 f}{\partial \phi^2}  \nonumber \\
&= (2+\sin(\theta))\cos(\theta)\sin(2\phi) -  (3+\sin(\phi))\sin(\theta)\sin(2\phi) + 4\cos(\theta)\cos(2\phi) - 4\sin(\theta)\sin(2\phi), \nonumber
\end{align}
and
\begin{align} \mathcal{L}^*f &= -\textup{div}(\mu f) + \sum_{l,r} \frac{d^2}{dx^l dx^r}(C_{lr} f) = -\frac{\partial}{\partial \theta}((2+\sin(\theta))f) + \frac{\partial^2}{\partial \theta^2}((3+\sin(\phi))f) + 2 \frac{\partial^2 f}{\partial \theta \partial \phi} + \frac{\partial^2 f}{\partial \phi^2}  \nonumber \\
&= -(2+\sin(\theta))\cos(\theta)\sin(2\phi)-\cos(\theta)\sin(\theta)\sin(2\phi) -  (3+\sin(\phi))\sin(\theta)\sin(2\phi) + 4\cos(\theta)\cos(2\phi) - 4\sin(\theta)\sin(2\phi). \nonumber
\end{align}
In Figure \ref{FPBK} we show that these analytic formulas compare closely to the discrete estimates $L_{\epsilon}f$ and $\hat L_{\epsilon}^*f$.

\subsection{Connection to nonlinear independent component analysis}

Nonlinear independent component analysis was studied for It\^o processes in \cite{Singer2008}.  The central assumption is that a stochastic process $x(t)$ is generated in an $n$-dimensional latent space, which is then observed by a nonlinear mapping $y(t) = F(x(t))$ into an $m$-dimensional observation space with $m\ge n$.  In the latent space, the process is assumed to have isotropic homogeneous stochastic forcing and drift determined by an arbitrary vector field $\mu(x)$.  Such a process can be described by the It\^o stochastic differential equation
\[ dx = \mu(x) \, dt + \textup{Id} \, dW_t, \]
where $dW_t$ is a Brownian process on the latent space and Id is the identity matrix.  The It\^o Lemma implies that  in the observation space, the process $y(t)$ is given by
\[ dy = \left(DF(x)\mu(x) + \frac{1}{2}\textup{trace}(H(F))\right)\, dt + DF(x) \, dW_t, \]
and the key insight of \cite{Singer2008} is that for $m\geq n$, there is a complete set of observables $\mathbb{E}[dy dy^\top] = DF(x)DF(x)^\top dt$ which allows  determination of  $DF(x)$ up to an orthogonal transformation.  Defining the correlation matrix $C_{y_i} = \mathbb{E}[dydy^\top](y_i)$, they construct the kernel
\[ K_y(\epsilon,y_i,y_j) \equiv \exp\left( -\frac{(y_j-y_i)^\top (C_{y_i}^{-1} + C_{y_j}^{-1})(y_j-y_i)}{4\epsilon} \right).\]
Note that $K_y$ is a local kernel which is closely related to a prototypical local kernel defined in Example \ref{prototype}, and it is easy to check the first moment is zero and the second moment is given by $C(y) = C_y$.  Theorem \ref{localkerneltheorem} reproves the result of \cite{Singer2008} that $L_{\epsilon}$ recovers the generator $\mathcal{L}f = \frac{1}{2}\sum_{i,j} C_{ij}\nabla_i \nabla_j f $.  Noting that ${\displaystyle \frac{\partial}{\partial y_i} = \sum_l (DF^{-1})_{il} \frac{\partial}{\partial x_l}}$, we have
\begin{align}\label{ica} \mathcal{L}f &= \frac{1}{2}\sum_{ij} C_{ij}\nabla_i \nabla_j f = \frac{1}{2}\sum_{ij} C_{ij} \frac{\partial^2 f}{\partial y_i \partial y_j} = \frac{1}{2}\sum_{i,j,l,s} C_{ij} (DF^{-1})_{il} \frac{\partial}{\partial x_l}\left( (DF^{-1})_{js} \frac{\partial \tilde f}{\partial x_s}\right) \nonumber \\  
&= \frac{1}{2}\sum_{i} \frac{\partial^2 \tilde f}{\partial x_i^2} + \frac{1}{2} \sum_{i,j,l,s}C_{ij} (DF^{-1})_{il} \frac{\partial (DF^{-1})_{js}}{\partial x_l}  \frac{\partial \tilde f}{\partial x_s} \nonumber \\
&= \frac{1}{2}\sum_{i} \frac{\partial^2 \tilde f}{\partial x_i^2} + \frac{1}{2} \sum_{j,l,s}DF_{jl} \frac{\partial (DF^{-1})_{js}}{\partial x_l}  \frac{\partial \tilde f}{\partial x_s}
\end{align}
where $\tilde f(x) = f(F^{-1}(y))$ and $\sum_i C_{ij}(DF^{-1})_{il} = DF_{jl}$.  Ignoring the second term of \eqref{ica}, which is an additional drift term, the kernel $K_y$ recovers a homogeneous isotropic diffusion as first shown in \cite{Singer2008}.  We can now extend this result to use the observables $\mathbb{E}[dy] = \left(DF(x)\mu(x) - \frac{1}{2}\textup{trace}(H(f))\right) dt$.  Setting $b(y_i) = \mathbb{E}[dy](y_i)$ and using the prototypical kernel
\[ K(\epsilon,y_i,y_j) = \exp\left(\frac{-(y_j-y_i-\epsilon b(y_i))^T C_{y_i}^{-1}(y_j-y_i-\epsilon b(y_i))}{2\epsilon}\right), \]
we find that $L_{\epsilon}$ converges to the operator
\begin{align}\label{driftica} \mathcal{L}f &= \sum_j b_j \nabla_j f + \frac{1}{2}\sum_{ij} C_{ij}\nabla_i \nabla_j f = \sum_{j,l} b_j(x)(DF^{-1})_{jl}\frac{\partial \tilde f}{\partial x_l} + \frac{1}{2}\sum_{ij} C_{ij}\nabla_i \nabla_j f \nonumber \\
&= \sum_{j,l,s} \left(DF_{jl}\mu_l(x) + \frac{1}{2}\frac{\partial^2 F^j}{\partial x_l^2}\right)(DF^{-1})_{js}\frac{\partial \tilde f}{\partial x_s} + \frac{1}{2}\sum_{ij} C_{ij}\nabla_i \nabla_j f \nonumber \\
&= \mu \cdot \nabla \tilde f +  \frac{1}{2}\sum_{j,l,s} \frac{\partial DF_{jl}}{\partial x_l}(DF^{-1})_{js}\frac{\partial \tilde f}{\partial x_s} + \frac{1}{2}\sum_{ij} C_{ij}\nabla_i \nabla_j f .
\end{align}
Since ${\displaystyle \sum_{jl}  \frac{\partial DF_{jl}}{\partial x_l}(DF^{-1})_{js} = -\sum_{jl} DF_{jl} \frac{\partial (DF^{-1})_{js}}{\partial x_l}}$, these terms cancel, yielding
\begin{align}\label{driftica1} \mathcal{L}f = \mu \cdot \nabla \tilde f +  \frac{1}{2}\Delta \tilde f , \end{align}
which is the generator of the It\^o diffusion process $x(t)$ in the latent space.  In the next section we will reinterpret this change of variables as a change in the Riemannian metric on the manifold.

\section{The intrinsic geometry of symmetric local kernels}\label{kernelgeometry}

In this section we consider local kernels that are symmetric in $x$ and $y$.  We will show that the operator $G_{\epsilon}$ associated to a symmetric local kernel is always a Laplacian with respect to a certain Riemannian metric, which depends on the second moment $C$ of the kernel and the metric $g$ inherited from the ambient space.  

\begin{Def}[Symmetric kernel] \rm A kernel function $K(\epsilon,x,y)$ is called {\it symmetric} if it is equal to its adjoint, $K(\epsilon,x,y) = K^*(\epsilon,x,y) = K(\epsilon,y,x)$.
\end{Def}

Notice that $\overline K = K + K^*$ is always symmetric, and any symmetric kernel can be trivially written in this form.  The results in this section will not assume that $K$ is symmetric but they will focus on the expansion of $\overline K$.  In order to connect symmetric kernels to the Laplacian operator, we first connect the operators $C_{ij}\nabla_i \nabla_j f$ and $\nabla_j \nabla_i (C_{ij} f)$ to the Laplacian with respect to a new Riemannian metric.

\begin{lem}[Change of metric]\label{newmetric} Let $(\mathcal{M},g)$ be a Riemannian manifold and let $\mu(x)$ be a vector field and $C(x)$ a (1,1)-tensor on $\mathcal{M}$.  Define the new metric  $\hat g(u,v) = g(C^{-1/2}u,C^{-1/2}v)$ which we denote $\hat g = C^{-1/2}gC^{-1/2}$. Then
\[ \sum_{i,j} C_{ij}\nabla_i \nabla_j f = \Delta_{\hat g}f + \kappa \cdot \nabla f \hspace{50pt}  \sum_{i,j} \nabla_j\nabla_i (C_{ij}f) = \rho^{-1}\Delta_{\hat g}(\rho f) - \textup{div}(\kappa f) \]
where $\Delta_{\hat g}$ is the Laplacian with respect to $\hat g$ and all other operators and inner products are with respect to $g$, where $\kappa$ is a vector field which depends on $g$ and $C$, and where $\rho = \sqrt{|g|/|\hat g|} = \sqrt{|C|}$ is a scalar function.
\end{lem}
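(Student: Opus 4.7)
The plan is to establish the first identity by a direct computation in normal coordinates for $g$, and to derive the second identity as the formal $L^2(dV_g)$-adjoint of the first.

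For the first identity, fix $x\in\mathcal{M}$ and choose normal coordinates for $g$ based at $x$. Then $g_{ij}(x) = \delta_{ij}$ and all Christoffel symbols of $g$ vanish at $x$, so $\nabla_i\nabla_j f(x) = \partial_i\partial_j f(x)$, while the defining relation $\hat g = C^{-1/2}gC^{-1/2}$ gives $\hat g_{ij}(x) = (C^{-1})_{ij}(x)$ and hence $\hat g^{ij}(x) = C_{ij}(x)$. Applying the coordinate formula for the Laplace--Beltrami operator,
\[\Delta_{\hat g} f = \frac{1}{\sqrt{|\hat g|}}\partial_i\bigl(\sqrt{|\hat g|}\,\hat g^{ij}\partial_j f\bigr) = \hat g^{ij}\partial_i\partial_j f + \bigl[\partial_i\hat g^{ij} + \hat g^{ij}\,\partial_i\log\sqrt{|\hat g|}\bigr]\partial_j f,\]
and evaluating at $x$, the second-order part becomes $C_{ij}(x)\partial_i\partial_j f(x) = C_{ij}\nabla_i\nabla_j f(x)$, and the first-order remainder takes the form $-\kappa^j(x)\partial_j f(x)$ for a vector $\kappa(x)$ built from $\partial\hat g$ and $\partial\log\sqrt{|\hat g|}$ at $x$. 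Since $\Delta_{\hat g}f - C_{ij}\nabla_i\nabla_j f$ is intrinsic and linear in $\nabla f$, it is given by pairing $\nabla f$ with a globally defined vector field $\kappa$ on $\mathcal M$ whose coordinate components at each $x$ reduce to the expression just obtained in normal coordinates.

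For the second identity, I take the $L^2(dV_g)$-adjoint of the first. Three adjoint calculations are required. (i) Two integrations by parts, using the symmetry of $C_{ij}$ and of the Hessian, show that the formal adjoint of $f\mapsto C_{ij}\nabla_i\nabla_j f$ is $f\mapsto\nabla_j\nabla_i(C_{ij}f)$. (ii) The volume identity $dV_g = \rho\,dV_{\hat g}$ combined with the self-adjointness of $\Delta_{\hat g}$ on $L^2(dV_{\hat g})$ gives
\[\int_{\mathcal M} h\,\Delta_{\hat g}f\,dV_g = \int_{\mathcal M}\rho h\,\Delta_{\hat g}f\,dV_{\hat g} = \int_{\mathcal M} f\,\Delta_{\hat g}(\rho h)\,dV_{\hat g} = \int_{\mathcal M} f\cdot\rho^{-1}\Delta_{\hat g}(\rho h)\,dV_g,\]
so the adjoint of $\Delta_{\hat g}$ on $L^2(dV_g)$ is $h\mapsto\rho^{-1}\Delta_{\hat g}(\rho h)$. (iii) The standard identity $\int(\kappa\cdot\nabla f)\,h\,dV_g = -\int f\,\mathrm{div}(\kappa h)\,dV_g$ shows that the adjoint of $f\mapsto\kappa\cdot\nabla f$ is $f\mapsto -\mathrm{div}(\kappa f)$. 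Summing the three adjoints of the first identity yields the second. Finally, $\rho = \sqrt{|C|}$ follows from $|\hat g| = |C^{-1/2}|^2|g| = |g|/|C|$.

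The main obstacle is the first step: one must verify that the first-order defect of $\Delta_{\hat g}$, extracted pointwise in coordinates, assembles into a globally defined vector field $\kappa$. This is forced by the intrinsic nature of $C_{ij}\nabla_i\nabla_j f$ and $\Delta_{\hat g}f$, but writing $\kappa$ in closed coordinate-free form in terms of $g$ and $\nabla_g C$ is the least transparent part of the calculation. A pleasant consistency check is that once $\kappa$ is determined from the first identity, the adjoint derivation in the second identity has no further freedom and must reproduce the same $\kappa$.
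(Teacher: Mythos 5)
Your proposal is correct and follows essentially the same route as the paper: a pointwise coordinate computation that matches the second-order parts and absorbs the first-order defect into the vector field $\kappa$, followed by an $L^2(dV_g)$-adjoint argument for the second identity using $dV_g = \rho\, dV_{\hat g}$, the self-adjointness of $\Delta_{\hat g}$, and the divergence identity for $\kappa\cdot\nabla$. The only cosmetic difference is that you expand $\Delta_{\hat g}$ in divergence form in $g$-normal coordinates, whereas the paper changes variables to $\hat s = C^{-1/2}s$ in geodesic coordinates; your version displays all first-order contributions to $\kappa$ slightly more explicitly.
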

\begin{proof}
Let $C = C(x)$ be the matrix with entries $C_{ij}(x)$ and define new coordinates $\hat s = C^{-1/2}s$ so that $\frac{d\hat s_l}{ds_j} = C_{lj}^{-1/2}$ and $\frac{df}{ds_j}(0) = \sum_{l} \frac{d\hat s_l}{ds_j} \frac{df}{d\hat s_l}(0) = \sum_l C_{lj}^{-1/2}  \frac{df}{d\hat s_l}(0)$ and therefore
\[ \frac{\partial^2 \tilde f}{\partial s_i \partial s_j} = \sum_{k,l}C^{-1/2}_{ki} \frac{\partial}{\partial \hat s_k} \left( C^{-1/2}_{lj} \frac{\partial \tilde f}{\partial \hat s_l} \right) = \sum_{k,l} C^{-1/2}_{ki}C_{lj}^{-1/2} \frac{\partial^2 \tilde f}{\partial \hat s_k \partial \hat s_l} + C_{ki}^{-1/2} \frac{\partial \tilde C^{-1/2}_{lj}}{\partial \hat s_k} \frac{\partial \tilde f}{\partial \hat s_l}.  \]
Substituting the above expression into the summation $\sum_{ij}C_{ij}\frac{\partial^2 \tilde f}{\partial s_i \partial s_j}(0)$ we find
\begin{align} \sum_{i,j} C_{ij} \frac{\partial^2 \tilde f}{\partial s_i \partial s_j}(0) &= \sum_{i,j,k,l} \left(C_{ij}C^{-1/2}_{ki}C_{lj}^{-1/2} \frac{\partial^2 \tilde f}{\partial \hat s_k \partial \hat s_l} + C_{ij}C_{ki}^{-1/2} \frac{\partial \tilde C^{-1/2}_{lj}}{\partial \hat s_k} \frac{\partial \tilde f}{\partial \hat s_l}\right) \nonumber \\ 
&= \sum_{k,l} \left[ \left(\sum_{i,j} C_{ij}C^{-1/2}_{ki}C_{lj}^{-1/2} \right) \frac{\partial^2 \tilde f}{\partial \hat s_k \partial \hat s_l} + \left(\sum_{i,j} C_{ij}C^{-1/2}_{ki}\frac{\partial \tilde C^{-1/2}_{lj}}{\partial \hat s_k} \right) \frac{\partial \tilde f}{\partial \hat s_l}\right] \nonumber \\
&= \sum_{k,l} \left[ \delta_{ik} \frac{\partial^2 \tilde f}{\partial \hat s_k \partial \hat s_l} + \left(\sum_{j} C^{1/2}_{jk} \frac{\partial \tilde C^{-1/2}_{lj}}{\partial \hat s_k} \right) \frac{\partial \tilde f}{\partial \hat s_l}\right] \nonumber \\
&= \sum_{k}  \frac{\partial^2 \tilde f}{\partial \hat s_k ^2 } + \sum_{k,l,j} C^{1/2}_{jk} \frac{\partial \tilde C^{-1/2}_{lj}}{\partial \hat s_k} \frac{\partial \tilde f}{\partial \hat s_l}, \nonumber
\end{align}
where all the derivatives are evaluated at $s = 0$.  Notice that the first term corresponds to the Laplacian $\Delta_{\hat g}f(x) =  \sum_{k}  \frac{\partial^2 \tilde f}{\partial \hat s_k ^2 }(0)$ and we can rewrite the second term as
\[ \sum_{k,l,j} C^{1/2}_{jk} \frac{\partial \tilde C^{-1/2}_{lj}}{\partial \hat s_k} \frac{\partial \tilde f}{\partial \hat s_l} = \sum_{i,k,l,j,r} C^{1/2}_{jk} C^{1/2}_{kr} \frac{\partial \tilde C^{-1/2}_{lj}}{\partial s_r} C^{1/2}_{li} \frac{\partial \tilde f}{\partial s_i} = \sum_{i,l,j,r} C_{jr}\frac{\partial \tilde C^{-1/2}_{lj}}{\partial s_r} C^{1/2}_{li} \frac{\partial \tilde f}{\partial s_i}.\]
We now define the vector field ${\displaystyle \kappa_i =  \sum_{l} \left(\sum_{j,r} C_{jr}\frac{\partial \tilde C^{-1/2}_{lj}}{\partial s_r} \right)C^{1/2}_{li}}$ so the previous expression can be simplified as
\[  \sum_{i,j} C_{ij} \frac{\partial^2 \tilde f}{\partial s_i \partial s_j}(0) = \Delta_{\hat g}f(x) + \kappa(x) \cdot \nabla f(x) \]
as desired.  To find $\nabla_j\nabla_i(C_{ij} f)$ note that with respect to the inner product $\left<\cdot,\cdot\right>$ on $L^2(\mathcal{M},g)$ we have
\begin{align} \left<\nabla_j\nabla_i(C_{ij} f),h\right> &= \left<f, C_ij \nabla_i\nabla_j h \right> = \left<f,\Delta_{\hat g}h\right> + \left<f,\kappa \cdot \nabla h \right> \nonumber \\
&= \int f \Delta_{\hat g}h \sqrt{|g|}\ dx + \left<-\textup{div}(f\kappa),h \right> = \int f \rho \Delta_{\hat g}h \sqrt{|\hat g|}\ dx + \left<-\textup{div}(f\kappa),h \right> \nonumber \\
&= \int \Delta_{\hat g}(f \rho) h \sqrt{|\hat g|}\ dx + \left<-\textup{div}(f\kappa),h \right> = \left<\rho^{-1}\Delta_{\hat g}(\rho f) -\textup{div}(\kappa f),h \right>,
\end{align}
for an arbitrary smooth test function $h$, therefore $ \sum_{i,j}\nabla_j\nabla_i(C_{ij} f) = \rho^{-1}\Delta_{\hat g}(\rho f) -\textup{div}(\kappa f)$.
\end{proof}
 
Applying Lemma \ref{newmetric} to the sum $\mathcal{L}+\mathcal{L}^*$ we have the following lemma.

\begin{lem} Let $\mathcal{L}$ and $\mathcal{L}^*$ denote the operators in (\ref{eqScriptL}). Under the assumptions of Lemma \ref{newmetric},
\[ \mathcal{L}f+\mathcal{L^*}f = \Delta_{\hat g}f + \nabla_{\hat g} f \cdot \frac{\nabla_{\hat g}\rho}{\rho} + f \tilde\omega, \]
where $\nabla_{\hat g}$ is the gradient with respect to $\hat g$, $\rho = \sqrt{|C|}$ and $\tilde\omega$ is a scalar function which depends on $\mu,C,$ and $g$.
\end{lem}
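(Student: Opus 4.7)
The plan is to start from the explicit formulas for $\mathcal{L}$ and $\mathcal{L}^*$ in (\ref{eqScriptL}), add them, and then apply Lemma \ref{newmetric} to rewrite the second-order parts in terms of $\Delta_{\hat g}$. Writing
\[
\mathcal{L}f + \mathcal{L}^*f = \bigl(\mu\cdot\nabla f - \textup{div}(\mu f)\bigr) + \tfrac12\sum_{ij} C_{ij}\nabla_i\nabla_j f + \tfrac12 \sum_{ij}\nabla_j\nabla_i(C_{ij}f),
\]
the first parenthetical simplifies immediately by the product rule to $-f\,\textup{div}(\mu)$, which is a multiple of $f$ and hence contributes only to $\tilde\omega$.

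Next I apply Lemma \ref{newmetric} to the two second-order sums, obtaining
\[
\tfrac12\bigl(\Delta_{\hat g}f + \kappa\cdot\nabla f\bigr) + \tfrac12\bigl(\rho^{-1}\Delta_{\hat g}(\rho f) - \textup{div}(\kappa f)\bigr).
\]
The $\kappa$-terms are handled exactly as the $\mu$-terms: $\tfrac12\kappa\cdot\nabla f - \tfrac12\textup{div}(\kappa f) = -\tfrac12 f\,\textup{div}(\kappa)$, another contribution to $\tilde\omega$. The remaining piece to massage is $\tfrac12\Delta_{\hat g}f + \tfrac12\rho^{-1}\Delta_{\hat g}(\rho f)$. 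Expanding via the product rule for $\Delta_{\hat g}$ (which holds for any Riemannian Laplacian),
\[
\Delta_{\hat g}(\rho f) = \rho\,\Delta_{\hat g}f + 2\,\nabla_{\hat g}\rho\cdot_{\hat g}\nabla_{\hat g}f + f\,\Delta_{\hat g}\rho,
\]
so dividing by $\rho$ and averaging with $\tfrac12\Delta_{\hat g}f$ produces
\[
\Delta_{\hat g}f + \nabla_{\hat g}f\cdot \frac{\nabla_{\hat g}\rho}{\rho} + \tfrac12 f\,\rho^{-1}\Delta_{\hat g}\rho.
\]

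Combining everything, the desired identity emerges with
\[
\tilde\omega = -\textup{div}(\mu) - \tfrac12\textup{div}(\kappa) + \tfrac12\rho^{-1}\Delta_{\hat g}\rho,
\]
which depends only on $\mu$, $C$ (through $\rho=\sqrt{|C|}$ and the vector field $\kappa$ from Lemma \ref{newmetric}), and $g$. I do not anticipate a substantive obstacle here: once Lemma \ref{newmetric} is invoked the argument is essentially bookkeeping. The only minor care needed is (i) recognizing that the two distinct first-order-in-$f$ terms arising from $\mu$ and $\kappa$ are each killed by the symmetrization $\mathcal{L}+\mathcal{L}^*$, leaving scalar multiples of $f$, and (ii) correctly applying the product rule for $\Delta_{\hat g}$ on the manifold $(\mathcal{M},\hat g)$ rather than on the ambient space.
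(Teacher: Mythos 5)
Your proof is correct and follows essentially the same route as the paper: apply Lemma \ref{newmetric} to both second-order sums, cancel the $\mu$- and $\kappa$-drift terms against their divergences via the product rule, and expand $\Delta_{\hat g}(\rho f)$ to extract the $\nabla_{\hat g}f\cdot\nabla_{\hat g}\rho/\rho$ term. The only difference is presentational (you symmetrize $\mathcal{L}+\mathcal{L}^*$ from the outset rather than rewriting $\mathcal{L}^*f$ in terms of $-\mathcal{L}f$ and moving it across), and your explicit $\tilde\omega$ is tracked more carefully than the paper's, which is immaterial since $\tilde\omega$ is left unspecified in the statement.
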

\begin{proof} From the previous lemma we have
\begin{align} \mathcal{L}^*f &= -\textup{div}(f\mu) + \frac{1}{2}\rho^{-1}\Delta_{\hat g}(\rho f) - \frac{1}{2}\textup{div}(\kappa f) \nonumber \\
&= \frac{1}{2}\rho^{-1}\left(\rho \Delta_{\hat g} f + f\Delta_{\hat g}\rho + 2\nabla_{\hat g}f\nabla_{\hat g}\rho\right) - \mu \cdot \nabla f - \frac{1}{2}\kappa \cdot \nabla f + f\textup{div}(\mu-\kappa/2) \nonumber \\
&= \frac{1}{2}\Delta_{\hat g}f + \nabla_{\hat g}f\nabla_{\hat g}\rho - \mathcal{L}f + \frac{1}{2}\Delta_{\hat g}f + f\left(\rho^{-1}\Delta_{\hat g}\rho +\textup{div}(\mu-\kappa/2) \right).
\end{align} 
Letting $\tilde \omega = \rho^{-1}\Delta_{\hat g}\rho +\textup{div}(\mu-\kappa/2)$ and moving $\mathcal{L}f$ to the left side yields the desired result.
\end{proof}

Combining the previous lemma with the expansion of the local kernel $K$ and its adjoint $K^*$ from Section \ref{localkernels}, we define the symmetric kernel $\overline K = K + K^*$.

\begin{thm}[Expansion of symmetric kernel]\label{symmtheorem} Let $K$ be a local kernel and define $\overline K \equiv K + K^*$. Then 
\[ \overline{G}_{\epsilon}f(x) \equiv \epsilon^{-d/2} \int_{\mathcal{M}} \overline K(\epsilon,x,y)f(y)dy = 2m(x)f(x) + \epsilon\left((2\omega(x)+\tilde \omega(x))f(x) + \Delta_{\hat g}f + \nabla_{\hat g} f \cdot \frac{\nabla_{\hat g}\rho}{\rho}\right) + \mathcal{O}(\epsilon^2) \]
and
\[ \overline L_{\epsilon} = \frac{1}{\epsilon}\left( (\overline G_{\epsilon}1)^{-1}\overline G_{\epsilon}f - \textup{Id}(f) \right) =  \Delta_{\hat g}f + \nabla_{\hat g} f \cdot \frac{\nabla_{\hat g}\rho}{\rho} + \mathcal{O}(\epsilon^2)\]
where $\hat g = C^{-1/2}g C^{-1/2}$ and $\rho = \sqrt{|C|}$.
\end{thm}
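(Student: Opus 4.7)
The plan is to assemble $\overline G_{\epsilon}$ by simply summing the two expansions already in hand, invoke the preceding lemma to rewrite $\mathcal{L}+\mathcal{L}^*$ in geometric form, and then apply the same normalization trick used in Theorem \ref{localkerneltheorem}. Concretely, I would proceed as follows.

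First I would add the expansions provided by Lemma \ref{mainlemma} and Lemma \ref{transposelemma}. Since $\overline K = K + K^*$ and the integral operator is linear in the kernel, one obtains
\[ \overline G_{\epsilon}f(x) = G_{\epsilon}f(x) + G^*_{\epsilon}f(x) = 2m(x)f(x) + \epsilon\bigl(2\omega(x)f(x) + (\mathcal{L}+\mathcal{L}^*)f(x)\bigr) + \mathcal{O}(\epsilon^2). \]
This is essentially a one-line calculation once the two lemmas are granted. Next, I would substitute the identity established in the lemma immediately preceding the theorem, namely
\[ \mathcal{L}f + \mathcal{L}^*f = \Delta_{\hat g}f + \nabla_{\hat g} f \cdot \frac{\nabla_{\hat g}\rho}{\rho} + \tilde\omega\, f, \]
to absorb the adjoint-based expression into purely geometric objects with respect to the new metric $\hat g = C^{-1/2} g C^{-1/2}$ and $\rho = \sqrt{|C|}$. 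This yields the first claimed expansion with the single scalar multiplier $2\omega(x) + \tilde\omega(x)$ in front of $f$.

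For the second assertion, I would evaluate the first expansion on the constant function $f \equiv 1$. Since $\Delta_{\hat g} 1 = 0$ and $\nabla_{\hat g} 1 = 0$, we get
\[ \overline G_{\epsilon} 1(x) = 2m(x) + \epsilon \bigl(2\omega(x) + \tilde\omega(x)\bigr) + \mathcal{O}(\epsilon^2), \]
and hence
\[ \bigl(\overline G_{\epsilon}1\bigr)^{-1} = \frac{1}{2m(x)}\left(1 - \epsilon\,\frac{2\omega(x) + \tilde\omega(x)}{2m(x)}\right) + \mathcal{O}(\epsilon^2). \]
Multiplying this expansion against $\overline G_{\epsilon}f$ and keeping terms through order $\epsilon$, the coefficients of $f$ in the unwanted $2\omega + \tilde\omega$ terms cancel exactly, leaving $f$ at leading order plus an $\epsilon$ correction proportional to $\Delta_{\hat g}f + \nabla_{\hat g}f \cdot \nabla_{\hat g}\rho/\rho$ (divided by the zeroth moment, just as in Theorem \ref{localkerneltheorem}). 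Subtracting $f$ and dividing by $\epsilon$ then gives the stated formula for $\overline L_{\epsilon}$.

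None of the individual steps is technically hard since all of the heavy asymptotic analysis has already been done in Lemmas \ref{mainlemma}, \ref{transposelemma}, and \ref{newmetric}. The only place where care is required is the normalization step: one must verify that the error introduced by expanding $(\overline G_{\epsilon}1)^{-1}$ to order $\epsilon$ does not pollute the leading geometric term, and that the $\omega$ and $\tilde\omega$ contributions cancel cleanly. This is precisely the same cancellation mechanism as in Theorem \ref{localkerneltheorem}, so the main obstacle is bookkeeping rather than any new analytic difficulty; the rest of the proof is a direct substitution.
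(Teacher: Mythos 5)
Your proof is correct and is exactly the argument the paper intends: the theorem is stated without an explicit proof, immediately after the remark that it follows by ``combining the previous lemma with the expansion of the local kernel $K$ and its adjoint $K^*$,'' which is precisely your sum of Lemmas \ref{mainlemma} and \ref{transposelemma} followed by substitution of the $\mathcal{L}+\mathcal{L}^*$ identity and the same left-normalization cancellation as in Theorem \ref{localkerneltheorem}. Your parenthetical that the order-$\epsilon$ term emerges divided by the zeroth moment (here $2m$) is also right; the paper's displayed formula for $\overline L_{\epsilon}$ suppresses that factor, whereas Theorem \ref{localkerneltheorem} retains the analogous $1/m$.
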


If $C(x)$ is the identity map, then $\hat g = g$ and we recover Lemma \ref{diffmaplemma} extended to all local isotropic kernels, so we no longer need to assume to specific form $h(||x-y||^2/\epsilon)$.  Moreover, for the prototypical kernel,  the following corollary holds.

\begin{cor}[Expansion of prototypical kernel]\label{symmPrototype} Let $K$ be a local kernel with zeroth moment $m(x) = m_0 \sqrt{|A(x)|}$ and second moment $C(x) = m(x)A(x)$ (such as the prototypical kernel). Then
\[ \overline{G}_{\epsilon}f(x) \equiv \epsilon^{-d/2} \int_{\mathcal{M}} \overline K(\epsilon,x,y)f(y)dy = 2m(x)f(x) + \epsilon\left((2\omega(x)+\tilde \omega(x))f(x) +m_0 q \Delta_{\tilde g}f + 2m_0 \nabla_{\tilde g} f \cdot \nabla_{\tilde g}q \right) + \mathcal{O}(\epsilon^2) \]
where $\tilde g = A^{-1/2}gA^{-1/2}$ and $q = \sqrt{|A|}$.
\end{cor}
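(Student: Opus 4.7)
The plan is to obtain the corollary by direct substitution into Theorem \ref{symmtheorem}, exploiting the fact that for a prototypical kernel the metric $\hat g$ is \emph{conformally} related to the simpler metric $\tilde g = A^{-1/2} g A^{-1/2}$, with scalar conformal factor depending only on $q=\sqrt{|A|}$. First, writing $C(x) = m(x)A(x) = m_0 q(x) A(x)$, I would record the elementary identities
\begin{align*}
C^{-1/2} &= (m_0 q)^{-1/2} A^{-1/2}, \\
\hat g &= C^{-1/2}gC^{-1/2} = (m_0 q)^{-1}\tilde g, \\
\rho &= \sqrt{|C|} = (m_0 q)^{d/2}\sqrt{|A|} = m_0^{d/2}\, q^{(d+2)/2}.
\end{align*}
Thus $\hat g = \lambda\,\tilde g$ with scalar $\lambda(x) = (m_0 q(x))^{-1}$, which reduces everything to a conformal-rescaling calculation on $(\mathcal{M},\tilde g)$.

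Second, I would apply the standard conformal transformation formulas for the Laplace--Beltrami operator and the gradient under $\hat g = \lambda\tilde g$:
\begin{align*}
\nabla_{\hat g} f &= \lambda^{-1}\nabla_{\tilde g}f, \\
\Delta_{\hat g}f &= \lambda^{-1}\Delta_{\tilde g}f + \tfrac{d-2}{2}\lambda^{-2}\,\nabla_{\tilde g}\lambda\cdot\nabla_{\tilde g}f,
\end{align*}
which follow directly from $\Delta_{\hat g}f = |\hat g|^{-1/2}\partial_i(|\hat g|^{1/2}\hat g^{ij}\partial_j f)$ via $|\hat g| = \lambda^{d}|\tilde g|$ and $\hat g^{ij}=\lambda^{-1}\tilde g^{ij}$. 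Substituting $\lambda = (m_0 q)^{-1}$ and using $\nabla_{\tilde g}\lambda = -(m_0 q)^{-2}\,m_0\,\nabla_{\tilde g}q$, the Laplacian term becomes
\[ \Delta_{\hat g}f = m_0 q\,\Delta_{\tilde g}f \;-\; \tfrac{d-2}{2}\,m_0\,\nabla_{\tilde g}q\cdot\nabla_{\tilde g}f. \]

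Third, I would handle the drift-like term $\nabla_{\hat g}f\cdot\nabla_{\hat g}\rho/\rho$ (with the dot taken in $\hat g$). Since $\rho = m_0^{d/2}q^{(d+2)/2}$, one has $\nabla \rho/\rho = \tfrac{d+2}{2}\,\nabla q/q$, and the $\hat g$ inner product pulls out a factor $\lambda^{-1} = m_0 q$:
\[ \nabla_{\hat g}f\cdot\nabla_{\hat g}\rho/\rho = m_0 q \cdot \tfrac{d+2}{2q}\,\nabla_{\tilde g}f\cdot\nabla_{\tilde g}q = \tfrac{d+2}{2}\,m_0\,\nabla_{\tilde g}f\cdot\nabla_{\tilde g}q. \]
Adding to the previous display, the $d$-dependent coefficients of $\nabla_{\tilde g}f\cdot\nabla_{\tilde g}q$ telescope via $-\tfrac{d-2}{2} + \tfrac{d+2}{2} = 2$, producing exactly $m_0 q\,\Delta_{\tilde g}f + 2m_0\,\nabla_{\tilde g}f\cdot\nabla_{\tilde g}q$. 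Inserting this into the expansion of Theorem \ref{symmtheorem} yields the claimed formula.

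The main obstacle, which is really just a bookkeeping trap rather than a conceptual one, is making sure the dimension-dependent constants from the conformal Laplacian formula and from $\rho = m_0^{d/2}q^{(d+2)/2}$ cancel cleanly; if one computes $\rho$ in the wrong power of $q$, or forgets that the dot product in $\nabla_{\hat g}f\cdot\nabla_{\hat g}\rho/\rho$ is taken with respect to $\hat g$ (not $\tilde g$), the $d$-dependence fails to vanish. All other ingredients — $\omega$, $\tilde\omega$, the $\mathcal{O}(\epsilon^2)$ remainder, and the overall structure $(\overline G_\epsilon 1)^{-1}$ normalization — are inherited unchanged from Theorem \ref{symmtheorem}.
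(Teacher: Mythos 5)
Your proposal is correct and follows essentially the same route as the paper's proof: both reduce $\hat g = C^{-1/2}gC^{-1/2}$ to the conformal rescaling $\hat g = (m_0 q)^{-1}\tilde g$, apply the conformal transformation law for $\Delta_{\hat g}$ (yielding the coefficient $1-d/2$), compute $\nabla_{\hat g}f\cdot\nabla_{\hat g}\rho/\rho$ from $\rho = m_0^{d/2}q^{(d+2)/2}$ (yielding $d/2+1$), and observe that the dimension-dependent coefficients sum to $2$. The only cosmetic difference is that you derive the conformal Laplacian identity from the coordinate formula, whereas the paper cites it as standard.
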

\begin{proof}
We have $C(x) = m(x)A(x) = m_0\sqrt{|A(x)|}A(x) = m_0 q(x)A(x)$, which means that
\[ \hat g = C^{-1/2}gC^{-1/2} = m_0^{-1} |A|^{-1/2}A^{-1/2}g A^{-1/2} = m_0^{-1} q^{-1}\tilde g.\] 
Thus $\hat g$ is conformal to $\tilde g$ and we have the following standard relationship for $\Delta_{\hat g}$ and $\Delta_{\tilde g}$:
\[ \Delta_{\hat g}f = m_0 q \Delta_{\tilde g} f + (1 - d/2) m_0 \nabla_{\tilde g} f \cdot \nabla_{\tilde g}q.  \]
Moreover, since $\rho = \sqrt{|C|} = |A|^{\frac{d+2}{4}} = m_0^{d/2} q^{d/2 + 1}$ and $\hat g^{ij} = m_0 q \tilde g^{ij}$ we have
\[ \nabla_{\hat g}f \cdot \frac{\nabla_{\hat g}\rho}{\rho} = \rho^{-1}  \sum_{ij} \hat g^{ij} \partial_i f \partial_j \rho = m_0 q^{-d/2-1} \sum_{ij} q\tilde g^{ij} \partial_i f (d/2+1)q^{d/2}\partial_j q = (d/2+1)m_0\nabla_{\tilde g}f \cdot \nabla_{\tilde g} q, \]
and combining this with the above formula yields
\[ \Delta_{\hat g}f +\nabla_{\hat g}f \cdot \frac{\nabla_{\hat g}\rho}{\rho} =m_0 q \Delta_{\tilde g} f + 2m_0 \nabla_{\tilde g}f \cdot \nabla_{\tilde g} q. \]
The result then follows from Theorem \ref{symmtheorem}.
\end{proof}

Notice that we do not consider the standard normalization for the prototypical kernel.  This is because the prototypical kernels have the following unique interpretation.  Let $\mathcal{H}:\mathcal{N} \to \mathcal{M} \subset \mathbb{R}^n$ be an embedding where $(\mathcal{N},g_{\mathcal{N}})$ is an abstract Riemannian manifold and $\mathcal{M}$ is the embedded manifold that our data lies on.  Assume that the data $x_i = \mathcal{H}(\tilde x_i)$ was originally sampled uniformly on $\mathcal{N}$ and then mapped into $\mathbb{R}^n$ by $\mathcal{H}$.  Set $q(x) = |D\mathcal{H}(\mathcal{H}^{-1}(x))|$, where the determinant is computed on $T_x\mathcal{M}$.  The sampling measure on $\mathcal{M}$ will be $q(x)^{-1}$.  This is crucial because we estimate the integral operator $\overline G_{\epsilon}f(x)$ as a Monte Carlo integral,
\[  \lim_{N \to \infty} \sum_{j=1}^N \overline K(\epsilon,x_i,x_j)f(x_j) = \int_{\mathcal{M}} K(\epsilon,x_i,y)f(y)q(y)^{-1}dy = \epsilon^{d/2}\overline G_{\epsilon}(fq^{-1})(x_i). \]
The fact that the data $x_i$ have sampling density $q^{-1}$ will bias our estimate of $\overline G_{\epsilon}$ and we will use this to our advantage.  

\begin{thm}[Intrinsic geometry of local kernels]\label{uniformmaintheorem} Let $(\mathcal{N}, g_{\mathcal{N}})$ be an abstract Riemannian manifold and let $\{\tilde x_i\}_{i=1}^N \subset \mathcal{N}$ be sampled uniformly according to the volume form defined by $g_\mathcal{N}$.  Let $\mathcal{H}:\mathcal{N} \hookrightarrow \mathbb{R}^n$ be an embedding with image $\mathcal{M}=\mathcal{H}(\mathcal{N})$ and let $x_i = \mathcal{H}(\tilde x_i)$. Define $A(x_i) = D\mathcal{H}(\tilde x_i)D\mathcal{H}(\tilde x_i)^\top$.  For any local kernel $K$ with $m(x)=\sqrt{|A(x)|}$ and covariance $C(x) = \sqrt{|A(x)|}A(x)$ (such as a prototypical kernel), and any smooth function $f$ on $\mathcal{M}$,
\[ \lim_{N\to\infty} \frac{2}{\epsilon}
\left( \frac{\sum_{j} \overline K(\epsilon,x_i,x_j)f(x_j)}{\sum_j \overline K(\epsilon,x_i,x_j)}-f(x_i)\right)
 = \Delta_{\tilde g}f(x_i) + \mathcal{O}(\epsilon) = \Delta_{g_{\mathcal{N}}} (f\circ \mathcal{H})(\tilde x_i) + \mathcal{O}(\epsilon) \]
where $\overline K(\epsilon,x,y) = K(\epsilon,x,y) + K(\epsilon,y,x)$ and $\tilde g(u,v) = g_{\mathcal{N}}(D\mathcal{H}^{-1}u,D\mathcal{H}^{-1}v)$.
\end{thm}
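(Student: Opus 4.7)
The plan is to interpret the two discrete sums as Monte Carlo approximations to $\overline G_\epsilon$ applied to slightly modified functions, invoke Corollary \ref{symmPrototype} for both the numerator and denominator, and then show by a product-rule calculation that the kernel/curvature nuisance terms cancel cleanly in the ratio, leaving only $\tfrac{1}{2}\Delta_{\tilde g} f$.

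First I would translate the sampling structure. Since $\{\tilde x_i\}$ is uniform on $(\mathcal{N}, g_{\mathcal{N}})$, the pushforward density of $x_i = \mathcal{H}(\tilde x_i)$ on $\mathcal{M}$ is $q(x)^{-1}$ with $q(x) = |D\mathcal{H}(\mathcal{H}^{-1}(x))| = \sqrt{|A(x)|}$, exactly as in the discussion preceding the theorem. By the law of large numbers,
\[
\frac{1}{N}\sum_{j} \overline K(\epsilon,x_i,x_j) f(x_j) \;\longrightarrow\; \int_{\mathcal{M}} \overline K(\epsilon,x_i,y) f(y) q(y)^{-1}\, dy \;=\; \epsilon^{d/2} \overline G_{\epsilon}(f q^{-1})(x_i),
\]
and the analogous statement with $f\equiv 1$ for the denominator. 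The factors of $N$ and $\epsilon^{d/2}$ cancel in the ratio, so it suffices to expand $\overline G_{\epsilon}(fq^{-1})/\overline G_{\epsilon}(q^{-1})$.

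Next, since $m(x) = \sqrt{|A(x)|} = q(x)$ with $m_0 = 1$, Corollary \ref{symmPrototype} gives, for any smooth $h$,
\[
\overline G_{\epsilon}(h)(x) = 2 q(x) h(x) + \epsilon\bigl[(2\omega+\tilde\omega) h + q\Delta_{\tilde g} h + 2\nabla_{\tilde g} h\cdot \nabla_{\tilde g} q\bigr] + \mathcal{O}(\epsilon^2).
\]
The core computation is then the product-rule identity
\[
q\,\Delta_{\tilde g}(fq^{-1}) + 2\nabla_{\tilde g}(fq^{-1})\cdot\nabla_{\tilde g} q = \Delta_{\tilde g} f - f q^{-1}\Delta_{\tilde g} q,
\]
which follows because the $|\nabla_{\tilde g} q|^2$ contributions and the $\nabla_{\tilde g} f\cdot\nabla_{\tilde g} q$ cross terms cancel exactly. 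Writing $Q(x) := (2\omega+\tilde\omega) q^{-1} - q^{-1}\Delta_{\tilde g} q$, we therefore obtain
\[
\overline G_{\epsilon}(fq^{-1})(x) = 2 f(x) + \epsilon\bigl[f(x) Q(x) + \Delta_{\tilde g} f(x)\bigr] + \mathcal{O}(\epsilon^2), \qquad \overline G_{\epsilon}(q^{-1})(x) = 2 + \epsilon Q(x) + \mathcal{O}(\epsilon^2).
\]
Expanding the quotient $(2f + \epsilon(fQ + \Delta_{\tilde g}f))/(2 + \epsilon Q)$ to first order kills the $fQ$ terms and yields $f + \tfrac{\epsilon}{2}\Delta_{\tilde g} f + \mathcal{O}(\epsilon^2)$; multiplying by $2/\epsilon$ and subtracting $f$ gives the first equality. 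The second equality follows immediately from the definition $\tilde g(u,v) = g_{\mathcal{N}}(D\mathcal{H}^{-1}u, D\mathcal{H}^{-1}v)$, which makes $\mathcal{H}:(\mathcal{N}, g_{\mathcal{N}}) \to (\mathcal{M}, \tilde g)$ an isometry, so the Laplace-Beltrami operator commutes with pullback by $\mathcal{H}$.

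The main obstacle is the algebraic cancellation that makes the left-normalization trick work in this particular setting. One needs the unwanted nuisance function $Q(x)$ (which depends on the kernel-dependent $\omega,\tilde\omega$ and on $\Delta_{\tilde g} q$) to appear in the numerator precisely in the form $f(x) Q(x)$, so that dividing by the denominator $2 + \epsilon Q(x)$ removes it at order $\epsilon$. Verifying this requires the nontrivial observation that the bias induced by the non-uniform sampling density $q^{-1}$ on $\mathcal{M}$ exactly compensates the prototypical-kernel normalization constant $m(x) = q(x)$ — this is why $m_0 = 1$ with $m(x) = \sqrt{|A|}$ is the correct choice — and the product-rule identity above, which is where the extra $\nabla_{\tilde g} f \cdot \nabla_{\tilde g} q$ terms from $2\nabla_{\tilde g}(fq^{-1})\cdot \nabla_{\tilde g} q$ are annihilated by the corresponding contribution inside $q\Delta_{\tilde g}(fq^{-1})$.
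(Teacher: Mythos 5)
Your proposal is correct and follows essentially the same route as the paper's proof: interpret the sums as Monte Carlo approximations of $\overline G_\epsilon(fq^{-1})$ and $\overline G_\epsilon(q^{-1})$ with $q=\sqrt{|A|}=m$, apply Corollary \ref{symmPrototype}, expand the quotient to first order in $\epsilon$, and use the product rule to reduce the surviving terms to $\tfrac{1}{2}\Delta_{\tilde g}f$, with the second equality coming from $\mathcal{H}$ being an isometry onto $(\mathcal{M},\tilde g)$. The only difference is organizational — you package the cancellation into the single identity $q\,\Delta_{\tilde g}(fq^{-1}) + 2\nabla_{\tilde g}(fq^{-1})\cdot\nabla_{\tilde g} q = \Delta_{\tilde g} f - f q^{-1}\Delta_{\tilde g} q$ and a nuisance function $Q$ before dividing, while the paper divides first and then applies the product rule — which is mathematically equivalent.
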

\begin{proof} Note that since $\tilde x_i$ are uniformly sampled, the data $\{x_i\}$ have density $q(x_i)^{-1}$ where $q(x) = |D\mathcal{H}(\mathcal{H}^{-1}(x))| = \sqrt{|A|}$.  This biases the Monte Carlo integral so that
\[ \lim_{N\to\infty} \epsilon^{-d/2} \sum_{j} \overline K(\epsilon,x_i,x_j)f(x_j) = \overline G_{\epsilon}(fq^{-1}), \]
and applying the previous corollary we have,
\[  \overline G_{\epsilon}(fq^{-1}) = 2m_0 m f q^{-1} + \epsilon\left((2\omega+\tilde \omega)fq^{-1} + q m_0 \Delta_{\tilde g}(fq^{-1}) + 2 m_0\nabla_{\tilde g} (fq^{-1}) \cdot \nabla_{\tilde g}q \right) + \mathcal{O}(\epsilon^2). \]
Note that when $f = 1$,
\[ \overline G_{\epsilon}(q^{-1})  = 2m_0 mq^{-1} + \epsilon\left((2\omega+\tilde \omega)q^{-1} + q m_0\Delta_{\tilde g}q^{-1}  + 2 m_0\nabla_{\tilde g} q^{-1}  \cdot \nabla_{\tilde g}q \right) + \mathcal{O}(\epsilon^2). \]
Expanding the ratio using the general fact that $\frac{a+\epsilon b}{c+\epsilon d} = \frac{a}{c} + \epsilon \frac{bc-ad}{c^2} + \mathcal{O}(\epsilon^2)$, and noting that $m q^{-1} = 1$ yields
\[ \frac{\overline G_{\epsilon}(fq^{-1})}{\overline G_{\epsilon}(q^{-1})} = f + \frac{\epsilon}{2} \left( q \Delta_{\tilde g}(fq^{-1}) + 2 \nabla_{\tilde g} (fq^{-1}) \cdot \nabla_{\tilde g}q - q f \Delta_{\tilde g}q^{-1}  - 2 f \nabla_{\tilde g} q^{-1}  \cdot \nabla_{\tilde g}q\right) + \mathcal{O}(\epsilon^2), \]
and applying the product rule $\Delta_{\tilde g}(fq^{-1}) = q^{-1}\Delta_{\tilde g} f + f\Delta_{\tilde g}(q^{-1}) + 2 \nabla_{\tilde g} f \cdot \nabla_{\tilde g} (q^{-1})$, we have
\[ \frac{2}{\epsilon}\left(\frac{\overline G_{\epsilon}(fq^{-1})}{\overline G_{\epsilon}(q^{-1})} - f \right) = \Delta_{\tilde g} f + \mathcal{O}(\epsilon), \]
and the limit as $N\to\infty$ follows.  Note that $\mathcal{H}$ is an isometry from $(\mathcal{N},g_{\mathcal{N}})$ to $(\mathcal{M},\tilde g)$ since $\tilde g(D\mathcal{H}u,D\mathcal{H}v) = g_{\mathcal{N}}(u,v)$ and therefore $\mathcal{H}^*(f) = f\circ \mathcal{H}$ commutes with the Laplacian. It follows that
\[ \Delta_{\tilde g}f(x) = (\Delta_{\tilde g}f)(\mathcal{H}(\tilde x)) = H^*(\Delta_{\tilde g}f)(\tilde x) = \Delta_{g_{\mathcal{N}}} (H^*f)(\tilde x) = \Delta_{g_{\mathcal{N}}} (f\circ\mathcal{H})(\tilde x)) \]
which completes the proof.
\end{proof}

We will call $(\mathcal{N},g_{\mathcal{N}})$ the \emph{intrinsic geometry} of the manifold $\mathcal{M}$ with respect to the local kernel $K$.  The previous theorem shows that, in direct analogy to the results of \cite{BN} and \cite{diffusion}, a symmetric local kernel defines a Laplacian operator on the intrinsic geometry.  In fact, the Laplacian $\Delta_{g_{\mathcal{N}}}$ is equivalent to the Riemannian metric $g_{\mathcal{N}}$ in the sense that one can be uniquely recovered from the other \cite{Jost}.  Unless the embedding $\cal H$ is isometric, the Riemannian metric $\tilde g$ will not agree with the metric $g$ that $\mathcal{M}$ inherits from $\mathbb{R}^n$.  When the embedding is isometric, the second moment of the local kernel will be the identity matrix and therefore $\tilde g = g$, recovering the result of standard diffusion maps \cite{diffusion} for uniform sampling.

Theorem \ref{uniformmaintheorem} is restricted by the assumption of uniform sampling in the intrinsic geometry.  In the next section we generalize this result to allow any smooth sampling density on $\mathcal{N}$.

\subsection{Nonuniform sampling in the intrinsic geometry}\label{nonuniform}

Theorem \ref{uniformmaintheorem} assumes that the data points $x_i = \mathcal{H}(\tilde x_i)$ are generated by sampling $\tilde x_i$ uniformly on the intrinsic geometry $(\mathcal{N},g_{\mathcal{N}})$.  This means that the data are sampled according to the volume form defined by $g_{\mathcal{N}}$.  As was first noted in \cite{diffusion}, this is a restrictive assumption for applications that do not have control over the sampling.  The solution introduced in \cite{diffusion} is the right-normalization discussed in Section \ref{dmbackground}, and here we replicate this technique for local kernels.  

\begin{thm}[Intrinsic geometry of local kernels, with nonuniform sampling]\label{maintheorem} Let $(\mathcal{N}, g_{\mathcal{N}})$ be an abstract Riemannian manifold and let $\{\tilde x_i\}_{i=1}^N \subset \mathcal{N}$ be sampled according to any smooth density on $\mathcal{N}$.  Let $\mathcal{H}:\mathcal{N} \hookrightarrow \mathbb{R}^n$ be an embedding with image $\mathcal{M}=\mathcal{H}(\mathcal{N}) \subset  \mathbb{R}^n$ and let $x_i = \mathcal{H}(\tilde x_i)$ and define $A(x_i) = D\mathcal{H}(\tilde x_i)D\mathcal{H}(\tilde x_i)^\top$.  For any local kernel $K$ with $m(x)=\sqrt{|A(x)|}$ and covariance $C(x) = \sqrt{|A(x)|}A(x)$ (such as a prototypical kernel), and any smooth function $f$ on $\mathcal{M}$,
\[ \lim_{N\to\infty} \frac{2}{\epsilon}\left(  \frac{{\displaystyle\sum_{j=1}^N} { \overline K(\epsilon,x_i,x_j)f(x_j)}/{\sum_l \overline K(\epsilon,x_j,x_l)}}{{\displaystyle \sum_{j=1}^N} { \overline K(\epsilon,x_i,x_j)}/{\sum_l \overline K(\epsilon,x_j,x_l)}}-f(x_i)\right) = \Delta_{\tilde g}f(x_i) + \mathcal{O}(\epsilon)= \Delta_{g_{\mathcal{N}}} (f\circ \mathcal{H})(\tilde x_i) + \mathcal{O}(\epsilon)\]
where $\overline K(\epsilon,x,y) = K(\epsilon,x,y) + K(\epsilon,y,x)$ and $\tilde g(u,v) = g_{\mathcal{N}}(D\mathcal{H}^{-1}u,D\mathcal{H}^{-1}v)$.
\end{thm}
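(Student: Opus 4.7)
The plan is to adapt the right-normalization trick used in Theorem \ref{uniformmaintheorem} so that the column-sum normalization in the statement absorbs the non-uniformity of the sampling. First, I would identify the sampling density induced on $\mathcal{M}$. Since $\mathcal{H}:(\mathcal{N},g_{\mathcal{N}})\to(\mathcal{M},\tilde g)$ is an isometry and the Jacobian from $\tilde g$-volume to $g$-volume at $x=\mathcal{H}(\tilde x)$ is $q(x)=\sqrt{|A(x)|}$, a smooth density $p$ of $\tilde x_i$ on $\mathcal{N}$ corresponds to density $\hat q(x)=p(\mathcal{H}^{-1}(x))/q(x)$ of $x_i$ on $(\mathcal{M},g)$. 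Monte Carlo convergence then gives, for any smooth $\phi$,
\[
\tfrac{1}{N}\sum_{j}\overline K(\epsilon,x_i,x_j)\phi(x_j) \;\longrightarrow\; \epsilon^{d/2}\,\overline G_{\epsilon}(\phi\hat q)(x_i).
\]

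Second, I would rewrite the quantity inside the limit as the ratio of two applications of $\overline G_{\epsilon}$ to a common weight. Writing $d(y)=\sum_{l}\overline K(\epsilon,y,x_l)$, the Monte Carlo limit yields $d(y)\sim N\epsilon^{d/2}\overline G_{\epsilon}(\hat q)(y)$, so the expression in the theorem converges to
\[
\frac{\overline G_{\epsilon}(f\tilde h)(x_i)}{\overline G_{\epsilon}(\tilde h)(x_i)},\qquad \tilde h(y):=\frac{\hat q(y)}{\overline G_{\epsilon}(\hat q)(y)}.
\]
By Corollary \ref{symmPrototype} applied to the constant weight $\hat q$, we have $\overline G_{\epsilon}(\hat q)(y)=2m(y)\hat q(y)+\epsilon\,\psi(y)+\mathcal{O}(\epsilon^2)$ for some smooth $\psi$; inverting gives $\tilde h(y)=\tfrac{1}{2m(y)}+\epsilon\, r(y)+\mathcal{O}(\epsilon^2)=\tfrac{1}{2m_{0}q(y)}+\epsilon\,r(y)+\mathcal{O}(\epsilon^2)$. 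The decisive observation is that $q(y)\tilde h(y)=\tfrac{1}{2m_0}+\mathcal{O}(\epsilon)$ is a \emph{constant} to leading order, regardless of $p$.

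Third, I would expand the ratio using Corollary \ref{symmPrototype} once more. The zeroth-order terms $2m\tilde h f$ and $2m\tilde h$ cancel, leaving $f$ plus an $\mathcal{O}(\epsilon)$ correction. Applying the product rules
\[
\Delta_{\tilde g}(f\tilde h)=\tilde h\Delta_{\tilde g} f+f\Delta_{\tilde g}\tilde h+2\nabla_{\tilde g}f\cdot\nabla_{\tilde g}\tilde h,\qquad \nabla_{\tilde g}(f\tilde h)=\tilde h\nabla_{\tilde g}f+f\nabla_{\tilde g}\tilde h,
\]
the $f\Delta_{\tilde g}\tilde h$ and $(2\omega+\tilde\omega)\tilde h f$ contributions cancel between numerator and denominator, and the surviving first-order piece simplifies (after dividing by $2m\tilde h$) to
\[
\tfrac{1}{2}\Delta_{\tilde g}f+\nabla_{\tilde g}f\cdot \nabla_{\tilde g}\ln(q\tilde h).
\]
Because $q\tilde h=\tfrac{1}{2m_0}+\mathcal{O}(\epsilon)$, one has $\nabla_{\tilde g}\ln(q\tilde h)=\mathcal{O}(\epsilon)$, so the drift term is absorbed into the error and multiplying by $2/\epsilon$ produces $\Delta_{\tilde g}f(x_i)+\mathcal{O}(\epsilon)$. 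The final equality with $\Delta_{g_{\mathcal{N}}}(f\circ\mathcal{H})(\tilde x_i)$ follows exactly as at the end of the proof of Theorem \ref{uniformmaintheorem}, since $\mathcal{H}$ is a Riemannian isometry between $(\mathcal{N},g_{\mathcal{N}})$ and $(\mathcal{M},\tilde g)$.

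The main obstacle is the bookkeeping in step three: naively, the $\epsilon r(y)$ correction to $\tilde h$ could produce an $\mathcal{O}(1)$ drift after multiplication by $2/\epsilon$. What saves the argument is precisely the algebraic identity $q\tilde h=\mathrm{const}+\mathcal{O}(\epsilon)$, which is the local-kernel analogue of the $\alpha=1$ cancellation of \cite{diffusion}; verifying that this cancellation survives at the right order in the local, anisotropic setting is the only delicate part of the calculation.
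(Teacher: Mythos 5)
Your proposal is correct and follows essentially the same route as the paper: identify the induced density $q_{\mathcal N}q^{-1}$ on $\mathcal M$, pass to the Monte Carlo limit, apply Corollary \ref{symmPrototype} to the right-normalized operator $\overline G_\epsilon\bigl(f\tilde h\bigr)/\overline G_\epsilon\bigl(\tilde h\bigr)$, and observe that the left-normalization kills the $f$-proportional terms while the drift terms cancel because $q\tilde h$ is constant to leading order. Your packaging of the density-independence via the identity $q\tilde h = \tfrac{1}{2m_0}+\mathcal{O}(\epsilon)$ is a clean restatement of the paper's own cancellation, not a different argument.
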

\begin{proof}
Assume that $\tilde x_i$ are sampled from $\mathcal{N}$ with density $q_{\mathcal{N}}$ written with respect to the volume form defined by $g_{\mathcal{N}}$.  This density biases the data $x_i$ so that their density is now $q_{\mathcal{N}}q^{-1}$, which biases the Monte Carlo integral so that
\[ \lim_{N\to\infty} \epsilon^{-d/2} \sum_{j} \overline K(\epsilon,x_i,x_j)f(x_j) = \overline G_{\epsilon}(fq_{\mathcal{N}}q^{-1}). \]
Applying the Corollary \ref{symmPrototype} and recalling that $m=q^{-1}$, we have
\[  \overline G_{\epsilon}(fq_{\mathcal{N}}q^{-1}) = 2m_0 f q_{\mathcal{N}}+ \epsilon\left((2\omega+\tilde \omega)fq_{\mathcal{N}}q^{-1} + q m_0 \Delta_{\tilde g}(fq_{\mathcal{N}}q^{-1}) + 2 m_0\nabla_{\tilde g} (fq_{\mathcal{N}}q^{-1}) \cdot \nabla_{\tilde g}q \right) + \mathcal{O}(\epsilon^2) \]
and setting $f = 1$ yields
\[ \overline G_{\epsilon}(q_{\mathcal{N}}q^{-1})  = 2m_0 q_{\mathcal{N}} + \epsilon\left((2\omega+\tilde \omega)q_{\mathcal{N}}q^{-1} + qm_0 \Delta_{\tilde g}(q_{\mathcal{N}}q^{-1})  + 2m_0 \nabla_{\tilde g} (q_{\mathcal{N}}q^{-1})  \cdot \nabla_{\tilde g}q \right) + \mathcal{O}(\epsilon^2). \]
We now introduce the right-normalization
\begin{align} \overline G_{\epsilon}\left(\frac{fq_{\mathcal{N}}q^{-1}}{\overline G_{\epsilon} q_{\mathcal{N}}q^{-1}} \right) &= \frac{2m_0 f q_{\mathcal{N}}}{2m_0 q_{\mathcal{N}} + \epsilon\left((2\omega+\tilde \omega)q_{\mathcal{N}}q^{-1} + qm_0 \Delta_{\tilde g}(q_{\mathcal{N}}q^{-1})  + 2m_0 \nabla_{\tilde g} (q_{\mathcal{N}}q^{-1})  \cdot \nabla_{\tilde g}q \right)}  \nonumber \\
&\hspace{10pt}+ \epsilon\left((2\omega+\tilde \omega)\frac{f}{2m_0q_{\mathcal{N}}}q_{\mathcal{N}}q^{-1} + q m_0 \Delta_{\tilde g}\left(\frac{f}{2m_0q_{\mathcal{N}}}q_{\mathcal{N}}q^{-1}\right) + 2 m_0\nabla_{\tilde g} \left(\frac{f}{2m_0q_{\mathcal{N}}}q_{\mathcal{N}}q^{-1}\right) \cdot \nabla_{\tilde g}q \right) + \mathcal{O}(\epsilon^2) \nonumber \\
&= \frac{f}{1 + \frac{\epsilon}{2}\left((2\omega+\tilde \omega)m_0^{-1}q^{-1} + q q_{\mathcal{N}}^{-1} \Delta_{\tilde g}(q_{\mathcal{N}}q^{-1})  + 2q_{\mathcal{N}}^{-1} \nabla_{\tilde g} (q_{\mathcal{N}}q^{-1})  \cdot \nabla_{\tilde g}q \right)}  \nonumber \\
&\hspace{10pt}+ \frac{\epsilon}{2}\left((2\omega+\tilde \omega)m_0^{-1}f q^{-1} + q \Delta_{\tilde g}\left(f q^{-1}\right) + 2 \nabla_{\tilde g} \left(fq^{-1}\right) \cdot \nabla_{\tilde g}q \right) + \mathcal{O}(\epsilon^2) \nonumber \\
&= f + \frac{\epsilon}{2}\left( q \Delta_{\tilde g}\left(f q^{-1}\right) + 2 \nabla_{\tilde g} \left(fq^{-1}\right) \cdot \nabla_{\tilde g}q - fq q_{\mathcal{N}}^{-1} \Delta_{\tilde g}(q_{\mathcal{N}}q^{-1})  -  2 f q_{\mathcal{N}}^{-1} \nabla_{\tilde g} (q_{\mathcal{N}}q^{-1})  \cdot \nabla_{\tilde g}q \right) + \mathcal{O}(\epsilon^2) \nonumber \\
&= f + \frac{\epsilon}{2}\left( q \Delta_{\tilde g}\left(f q^{-1}\right) + 2 \nabla_{\tilde g} \left(fq^{-1}\right) \cdot \nabla_{\tilde g}q + f\hat\omega \right)  + \mathcal{O}(\epsilon^2) \nonumber
\end{align}
where $\hat\omega = - q q_{\mathcal{N}}^{-1} \Delta_{\tilde g}(q_{\mathcal{N}}q^{-1})  -  2  q_{\mathcal{N}}^{-1} \nabla_{\tilde g} (q_{\mathcal{N}}q^{-1})  \cdot \nabla_{\tilde g}q$.  We note that by linearity of $\Delta$ and $\nabla$ we can neglect the order $\epsilon$ term in the denominator when plugging into these operators since they are already order $\epsilon$.  We now apply left-normalization to $\hat G_{\epsilon}(f) \equiv \overline G_{\epsilon}\left(\frac{fq_{\mathcal{N}}q^{-1}}{\overline G_{\epsilon} q_{\mathcal{N}}q^{-1}} \right)$ so that
\begin{align} \frac{\hat G_{\epsilon}(f)}{\hat G_{\epsilon}(1)} &= f + \frac{\epsilon}{2}\left( q \Delta_{\tilde g}\left(f q^{-1}\right) + 2 \nabla_{\tilde g} \left(fq^{-1}\right) \cdot \nabla_{\tilde g}q - f q \Delta_{\tilde g}\left(q^{-1}\right) - 2f \nabla_{\tilde g} \left(q^{-1}\right) \cdot \nabla_{\tilde g}q \right) + \mathcal{O}(\epsilon^2)  \nonumber \\
&= f + \frac{\epsilon}{2} \Delta_{\tilde g} f + \mathcal{O}(\epsilon^2). \nonumber
\end{align}
The conclusion follows from noting that 
${\displaystyle \lim_{N\to\infty} \frac{2}{\epsilon}\left(  \frac{{\displaystyle\sum_{j=1}^N} { \overline K(\epsilon,x_i,x_j)f(x_j)}/{\sum_l \overline K(\epsilon,x_j,x_l)}}{{\displaystyle \sum_{j=1}^N} { \overline K(\epsilon,x_i,x_j)}/{\sum_l \overline K(\epsilon,x_j,x_l)}}-f(x_i)\right) }$
converges to \newline
${\displaystyle \frac{2}{\epsilon}\left(\frac{\hat G_{\epsilon}(f)(x_i)}{\hat G_{\epsilon}(1)(x_i)}-f(x_i)\right)}$ as $N\to \infty$.
\end{proof}

Theorem \ref{maintheorem} allows us to recover the intrinsic geometry independent of the sampling on $(\mathcal{N},g_{\mathcal{N}})$ by using the right-normalization.  The right-normalization is equivalent to the diffusion maps normalization with $\alpha=1$ in \cite{diffusion}.  This establishes our key result, which is that every local kernel defines a geometry in the limit of large data.  Of course, many local kernels could define the same intrinsic geometry, and the previous theorem reveals that it is the second moment of the kernel which determines the intrinsic geometry.  The next theorem establishes the converse: that every Riemannian geometry on a manifold can be represented by a local kernel.

\begin{thm}\label{mainconverse} Let $\mathcal{M}\subset\mathbb{R}^n$ be an embedded Riemannian manifold with $g$ the induced metric and let $(\mathcal{N},g_{\mathcal{N}})$ be any manifold diffeomorphic to $\mathcal{M}$.  There exists a local kernel $K$ such that $(\mathcal{N},g_{\mathcal{N}})$ is the intrinsic geometry of $(\mathcal{M},g)$ with respect to $K$.
\end{thm}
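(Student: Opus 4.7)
The plan is to invoke Theorem \ref{maintheorem} by producing an embedding $\mathcal{H}:\mathcal{N}\to\mathbb{R}^n$ and a local kernel $K$ whose moments match those demanded by that theorem for this $\mathcal{H}$. Since Theorem \ref{maintheorem} already shows that the intrinsic geometry coming from such a setup is $(\mathcal{N},g_{\mathcal{N}})$, the converse amounts to exhibiting a legitimate choice of $K$.

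First I would fix the embedding canonically. Let $\phi:\mathcal{N}\to\mathcal{M}$ be the given diffeomorphism and put $\mathcal{H}=\iota\circ\phi$, where $\iota:\mathcal{M}\hookrightarrow\mathbb{R}^n$ is the inclusion; the image of $\mathcal{H}$ is exactly $\mathcal{M}$, so the hypothesis on $\mathcal{H}$ in Theorem \ref{maintheorem} holds. For each $x=\mathcal{H}(\tilde x)$, representing $D\mathcal{H}(\tilde x):T_{\tilde x}\mathcal{N}\to T_x\mathcal{M}\subset\mathbb{R}^n$ in a $g_{\mathcal{N}}$-orthonormal basis of $T_{\tilde x}\mathcal{N}$ and the standard basis of $\mathbb{R}^n$ gives an $n\times d$ matrix, and I set $A_0(x)=D\mathcal{H}(\tilde x)D\mathcal{H}(\tilde x)^\top$, a symmetric positive-semidefinite rank-$d$ matrix with column space $T_x\mathcal{M}$.

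Second, to obtain the symmetric positive definite $n\times n$ matrix required by Example \ref{prototype}, I would extend $A_0$ in the normal directions: with $P_x:\mathbb{R}^n\to T_x\mathcal{M}$ the orthogonal projection, set $A(x)=A_0(x)+(I_n-P_x)$. This is smooth in $x$, symmetric positive definite, and satisfies $\mathcal{I}(x)A(x)\mathcal{I}(x)^\top=\mathcal{I}(x)A_0(x)\mathcal{I}(x)^\top$, so the tangential moments are unaffected. I then take $K$ to be the drift-free prototypical kernel
\[ K(\epsilon,x,y)=\exp\Bigl(-\tfrac{1}{2\epsilon}(x-y)^\top A(x)^{-1}(x-y)\Bigr), \]
which is skew-free and a local kernel. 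By Example \ref{prototype} its moments are $m(x)=(2\pi)^{d/2}\sqrt{|\mathcal{I}(x)A_0(x)\mathcal{I}(x)^\top|}$ and $C(x)=m(x)\,\mathcal{I}(x)A_0(x)\mathcal{I}(x)^\top$; the global multiplicative constant $(2\pi)^{d/2}$ is irrelevant because all operators in Theorem \ref{maintheorem} appear as ratios in which it cancels. Applying Theorem \ref{maintheorem} for any smooth sampling density on $\mathcal{N}$ then yields, for every smooth $f$ on $\mathcal{M}$,
\[ \lim_{N\to\infty}\frac{2}{\epsilon}\!\left(\frac{\sum_{j}\overline{K}(\epsilon,x_i,x_j)f(x_j)/\sum_l\overline{K}(\epsilon,x_j,x_l)}{\sum_{j}\overline{K}(\epsilon,x_i,x_j)/\sum_l\overline{K}(\epsilon,x_j,x_l)}-f(x_i)\right)=\Delta_{g_{\mathcal{N}}}(f\circ\mathcal{H})(\tilde x_i)+\mathcal{O}(\epsilon), \]
so $(\mathcal{N},g_{\mathcal{N}})$ is the intrinsic geometry of $(\mathcal{M},g)$ with respect to $K$; the standard fact that the Laplacian uniquely determines the Riemannian metric \cite{Jost} then closes the argument.

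The main obstacle is reconciling the rank-$d$ tangential tensor $A_0$ that arises naturally from $D\mathcal{H}$ with the full-rank $n\times n$ matrix whose inverse the prototypical kernel wants. The normal-bundle extension above is the natural fix, but one must check that the resulting $K$ still satisfies the local-kernel Gaussian bound, which is immediate because $A(x)$ has eigenvalue $1$ on $N_x\mathcal{M}$ and eigenvalues bounded below by $(\min_x\sigma_{\min}(D\mathcal{H}(\mathcal{H}^{-1}(x))))^2>0$ on $T_x\mathcal{M}$ by compactness of $\mathcal{M}$, and that $A(x)$ depends smoothly on $x$, which follows from smoothness of $\mathcal{H}$ and of the projection $P_x$ for a smooth embedded submanifold.
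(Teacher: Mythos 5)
Your proposal is correct and follows essentially the same route as the paper: pick a diffeomorphism $\mathcal{H}:\mathcal{N}\to\mathcal{M}$, form the prototypical kernel with $A=D\mathcal{H}D\mathcal{H}^\top$, and invoke Theorem \ref{maintheorem}. The paper's proof is a two-line sketch; your extension of the rank-$d$ tensor $A_0$ by the identity on the normal bundle (and the checks of positive definiteness, smoothness, and the unchanged tangential moments) fills in exactly the detail the paper leaves implicit, so this is the same argument done more carefully rather than a different one.
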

\begin{proof} Since $\mathcal{M}$ and $\mathcal{N}$ are diffeomorphic and since $g_{\mathcal{N}}$ and $g$ are positive definite we can always find  a diffeomorphism $\mathcal{H}:\mathcal{N}\to\mathcal{M}$ such that $g_{\mathcal{N}}(u,v) = g(D\mathcal{H}u,D\mathcal{H}v)$. Let $K$ be the prototypical local kernel with $A = D\mathcal{H}D\mathcal{H}^\top$, then $(\mathcal{N},g_{\mathcal{N}})$ is the intrinsic geometry of $\mathcal{M}$ with respect to $K$.
\end{proof}

Together, Theorems \ref{maintheorem} and \ref{mainconverse} show that Riemannian metrics are in one-to-one correspondence with equivalence classes of local kernels that have the same second moment tensor $C(x)$.  Of course, it is also possible to use diffusion maps to find the Laplacian with respect to any Riemannian metric.  By Nash's theorem \cite{laplacianBook}, every Riemannian manifold admits an isometric embedding into $\mathbb{R}^M$ for $M$ large enough.  To recover the intrinsic geometry $g_{\mathcal{N}}$ with diffusion maps we would have to find a global isometric embedding of $(\mathcal{N},g_{\mathcal{N}})$ into a Euclidean space.  Of course, in practice, finding such a global isometric embedding would be quite difficult.

Theorem \ref{maintheorem} provides an alternative which is valuable in two respects.  First, a local kernel allows one to easily change the metric using only local information without having to construct a globally consistent embedding.  This is a significant advantage when trying to form data driven techniques to modify the metric as we will see in Section \ref{mainresultex}.  Second, the theory of local kernels gives a geometric interpretation to many existing techniques which use local kernels such as $K(x,y) = e^{-||y-x||_{A(x)}^2}$ where $A(x)$ defines a special distance measure on the embedded data.  The theory of local kernels shows that these techniques are changing the geometry of the embedded data.  Understanding the geometric content of kernel based methods provides novel avenues for analyzing the data.  

Next we demonstrate the numerical application of a local kernel to modify the geometry of a data set, and in Section \ref{mainresultex} we will demonstrate new techniques for data driven geometric regularization using local kernels.

\subsection{Numerical example: Recovering the flat metric on a torus with a local kernel} \label{flattorussec}

In this section we show that a local kernel can recover the flat metric on a torus embedded in $\mathbb{R}^3$ with nonzero Riemannian curvature.  Let $\theta,\phi \in [0,2\pi)$ be the intrinsic coordinates of the torus. The flat metric is given simply by $g_{\theta,\phi} = \textup{Id}_{2\times 2}$, the product metric induced by the structure $T^2 = S^1 \times S^1$.  Now consider the embedding $\iota:T^2 \to \mathbb{R}^3$ given by
\[ \iota((\theta,\phi)) = \left[ \begin{array}{c} (2+\sin\theta)\cos\phi \\ (2+\sin\theta)\sin\phi \\ \cos\theta \end{array} \right] \hspace{50pt} D\iota((\theta,\phi)) = \left[ \begin{array}{c c} \cos\theta\cos\phi & -(2+\sin\theta)\sin\phi) \\ \cos\theta\sin\phi & (2+\sin\theta)\cos\phi \\ -\sin\theta & 0 \end{array} \right] \]
which induces a curved metric on the torus.  Our goal is to use a local kernel to undo the curvature induced by the embedding and recover the flat metric.

 \begin{figure}[h]
  \begin{center}
\subfigure[]{  \includegraphics[width=.31\linewidth]{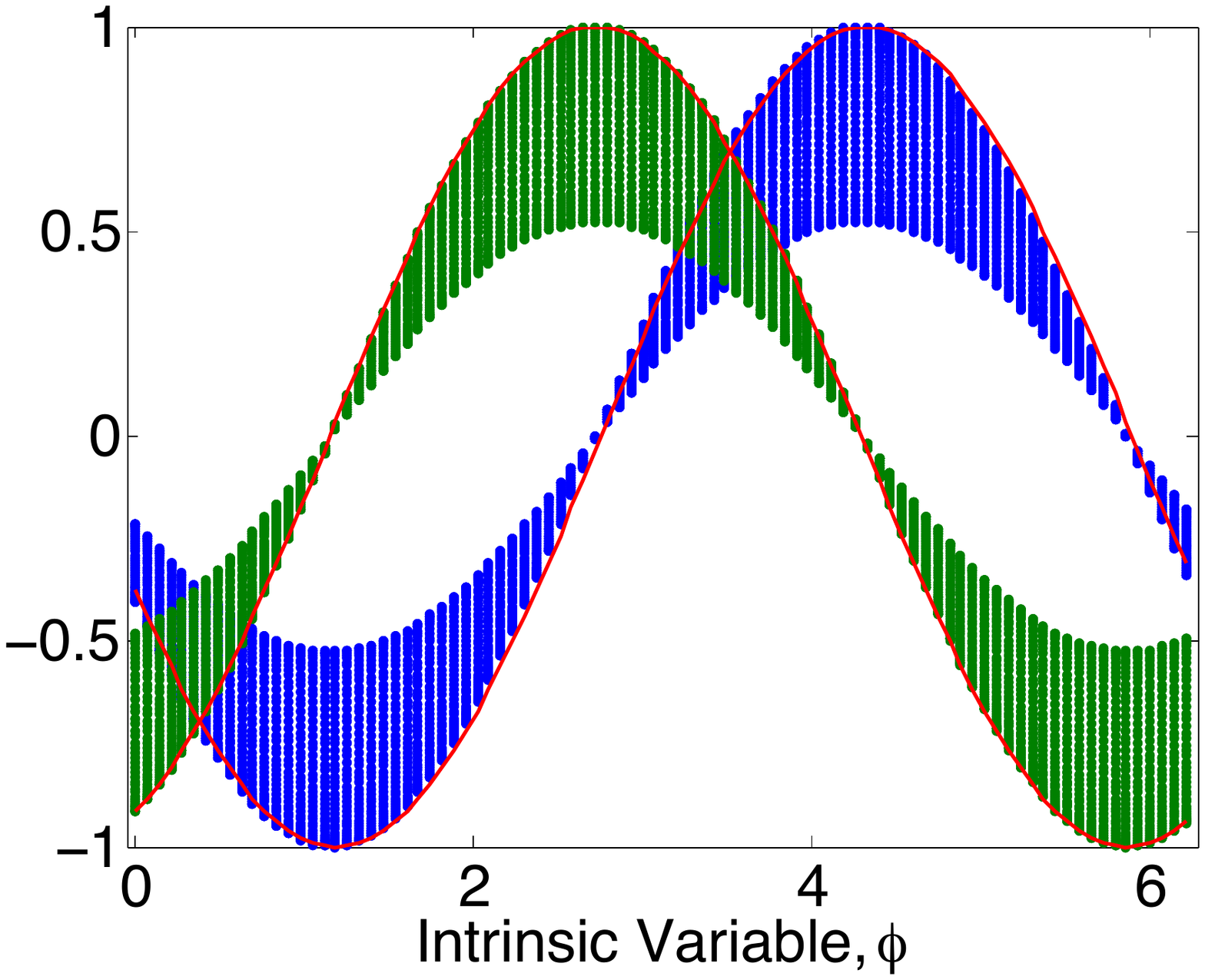}}
\subfigure[]{\includegraphics[width=.31\linewidth]{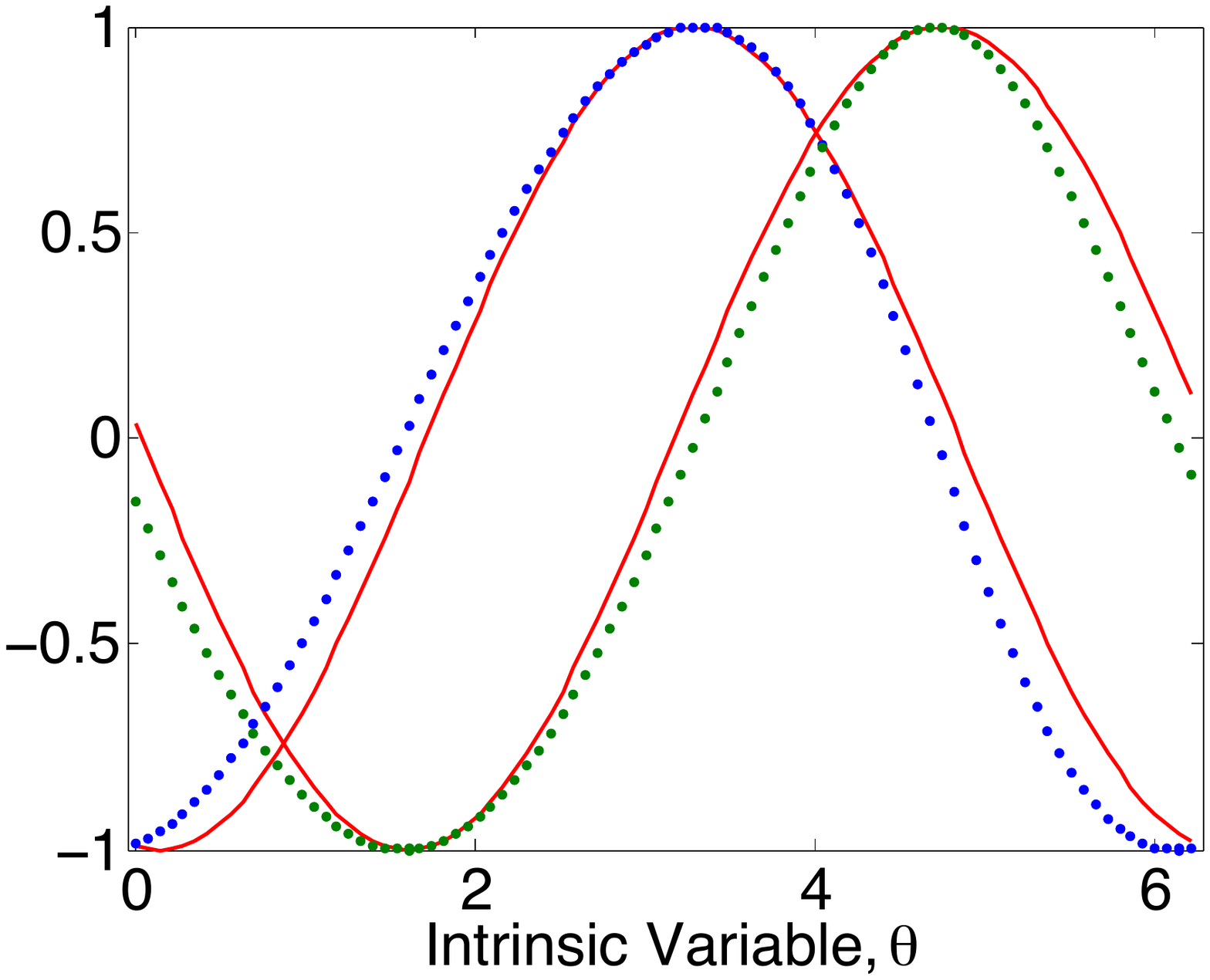}}
\subfigure[]{\includegraphics[height=.25\linewidth]{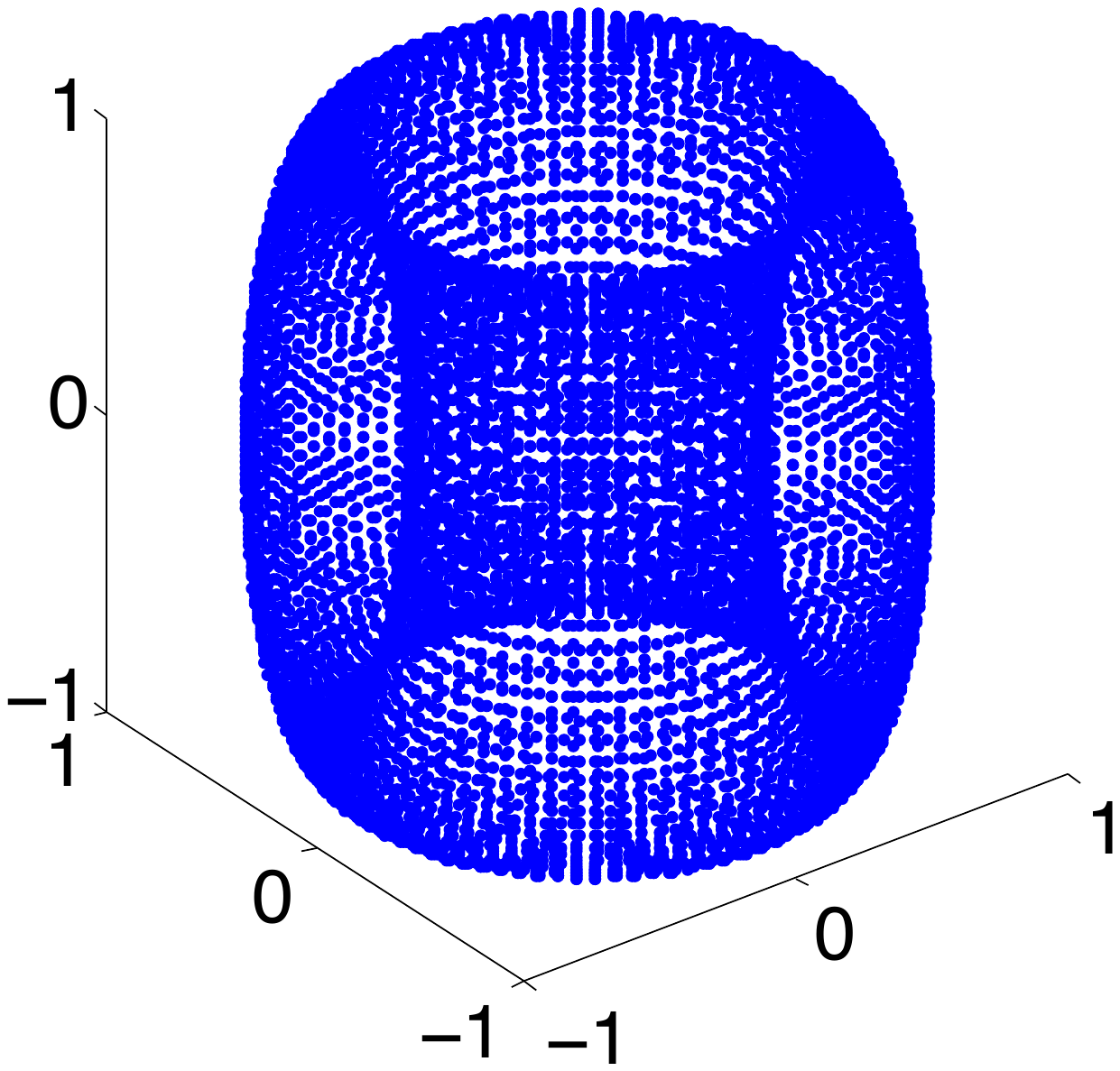}}\\
\subfigure[]{  \includegraphics[width=.31\linewidth]{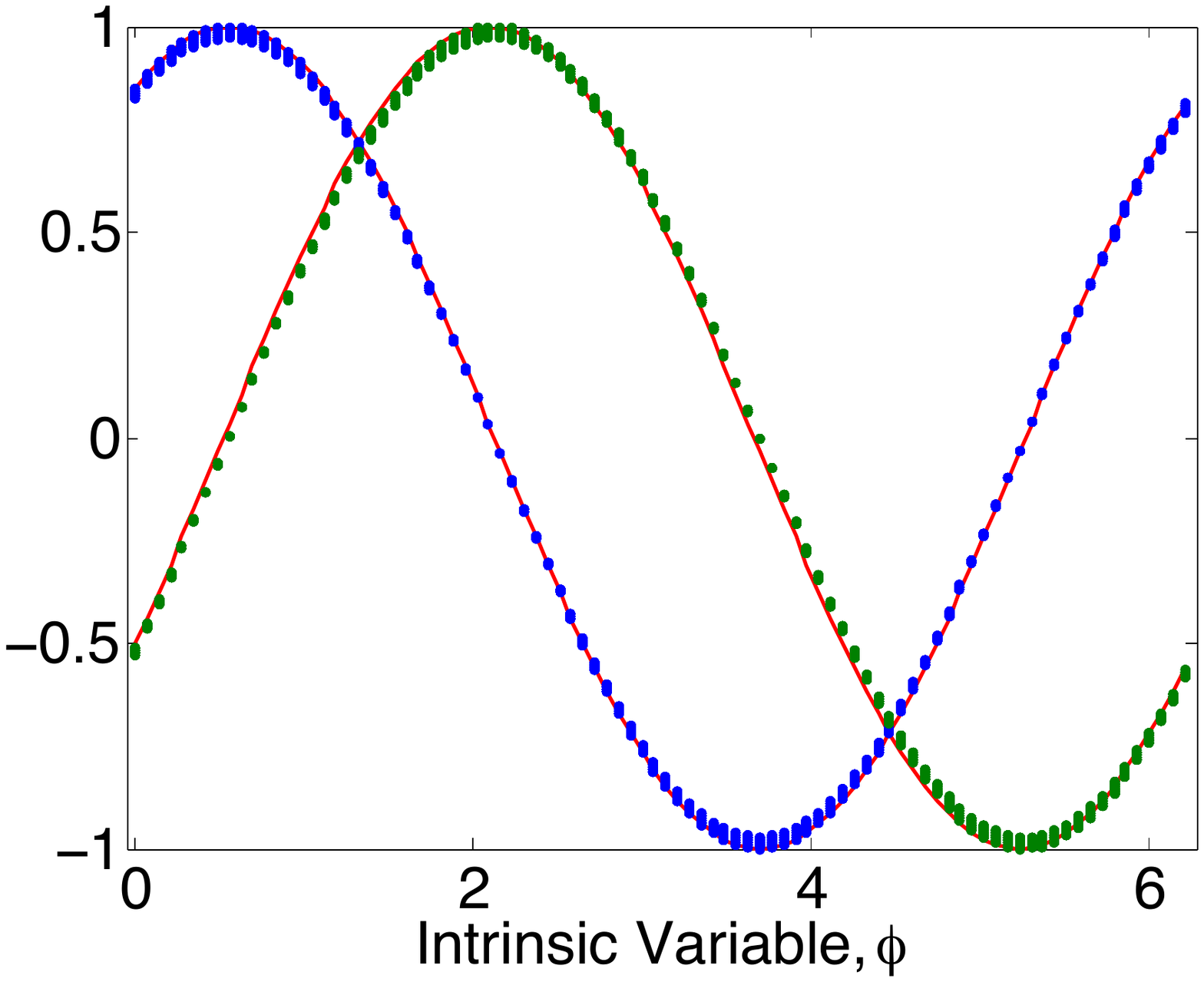}}
\subfigure[]{\includegraphics[width=.31\linewidth]{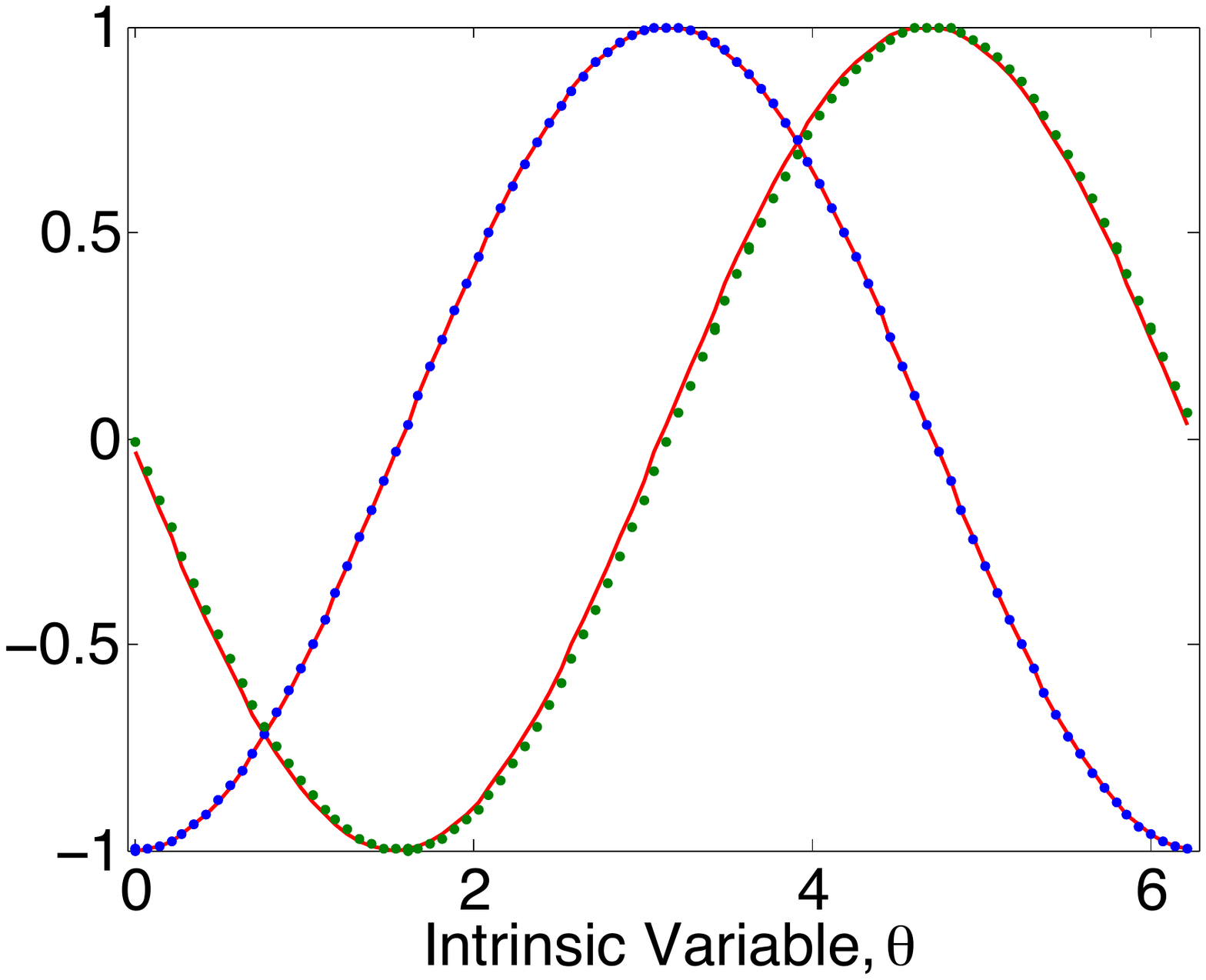}}
\subfigure[]{\includegraphics[height=.25\linewidth]{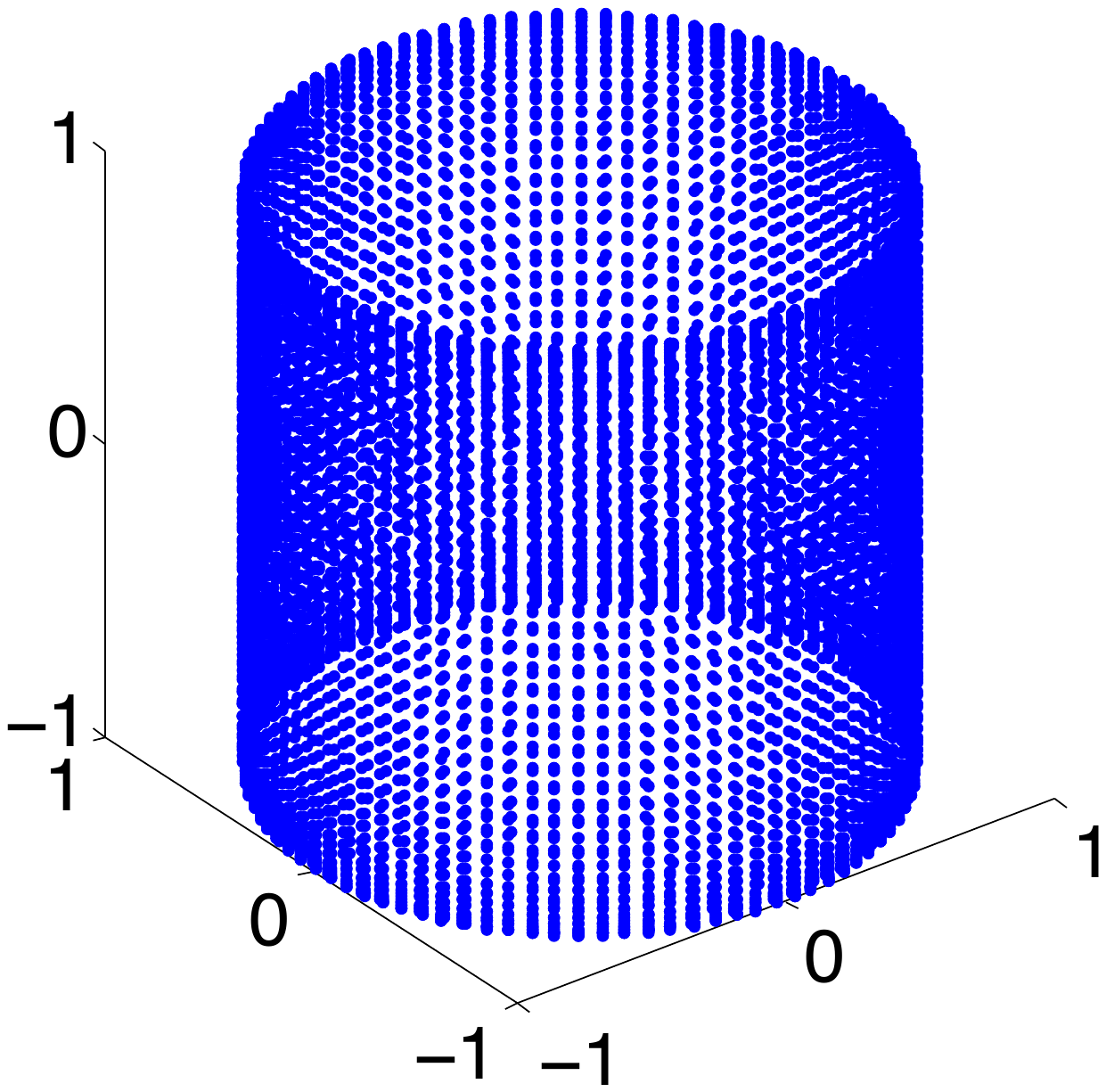}}\\
\caption{\label{flattorus} Comparison of standard diffusion maps (a-c) with local kernel approach (d-f). (a) First (blue) and second (green) eigenfunctions of the Laplacian with respect to the induced metric approximated by the diffusion maps construction; the red curves are sine functions with the same phase as the eigenfunctions. (b) Eigenfunctions five (blue) and six (green); note that all the plots contain the same number of points and the vertical spread in this plot indicates the $\theta$ dependence. (c) The diffusion maps embedding of the torus using eigenfunctions one, two, and five.  (d) Same as (a) but using eigenfunctions from the local kernel construction described in the text, Eigenfunctions one (blue) and two (green); (e) Eigenfunctions three (blue) and four (green). (f) Embedding using eigenfunctions one, two, and three.  Note that the surface shown is flat (zero Riemannian curvature) as expected but is not an embedding of the torus; this is because a smooth isometric embedding of the flat torus requires four dimensions.}
\end{center}
\vspace{.1in}
\end{figure}

We generated 8100 points on a uniform grid in $[0,2\pi]^2$ to represent the intrinsic variables and then mapped these points into $\mathbb{R}^3$ via $\iota$ to generate the observed variables.  We first applied the standard diffusion map algorithm to the observed data set with $\alpha = 1$ (since the points are not uniformly distributed on the embedded manifold) in order to approximate the first four eigenvectors of the Laplacian with respect to the curved metric from the embedding space.  In Figure \ref{flattorus} we show these eigenfunctions plotted against the intrinsic variables along with the diffusion map embedding with coordinates given by the first three eigenfunctions.  As in Section \ref{dmbackground}, the diffusion maps algorithm estimates the Laplacian with respect to the Riemannian metric induced by the embedding.  

To show that a local kernel could recover the Laplacian with respect to the flat metric, we defined the local kernel
\[ K(\epsilon,x,y) = \exp\left(-\frac{(y-x)^T A(x)(y-x)}{\epsilon} \right) \hspace{40pt} A(x) = \left(D\iota(\iota^{-1}(x))^{\dagger}\right)^TD\iota(\iota^{-1}(x))^{\dagger}. \]
With this definition, $K$ is a prototypical kernel with $\overline{K}(\epsilon,x,x+\sqrt{\epsilon}z) = e^{-z^T A(x) z}$ which implies that $C(x)^{-1/2} = D\iota(\iota^{-1}(x))^{\dagger}$ on $T_x\mathcal{M}$.  Since the metric induced by the embedding is $g = (D\iota(x))^T D\iota(x)$, Theorem \ref{maintheorem} implies that using the local kernel $K$ approximates the Laplacian with respect to the metric
\[ \hat g = C^{-1/2} g C^{-1/2} = I \]
which is the flat metric on the torus.  In order to validate Theorem \ref{maintheorem} numerically we constructed the discrete Laplacian matrix $L_{ij}$ defined by,
\[ L_{ij} \equiv \frac{2}{\epsilon}\left( \frac{\overline K(\epsilon,x_i,x_j)}{ \sum_l \overline K(\epsilon,x_j,x_l) \sum_s \frac{ \overline K(\epsilon,x_i,x_s)}{\sum_l \overline K(\epsilon,x_s,x_l)}}-\textup{Id}_{N\times N}\right). \]
Since Theorem \ref{maintheorem} says that in the limit of large data and small $\epsilon$ this matrix converges to the Laplacian with respect to $\hat g$, which is the flat metric, the eigenvectors of this matrix should approximate the eigenfunctions of $\Delta_{\hat g}$.  In Figure \ref{flattorus} we confirm this result numerically using the data set described above.  For both the diffusion maps algorithm and for the matrix $L$, we chose $\epsilon = \frac{1}{N}\sum_{i=1}^N ||x_i-x_{nn(i)}||^2$ where $nn(i)$ is the index of the nearest neighbor of $x_i$.

Of course, this kernel is not purely data driven since we have used knowledge of the embedding $\iota$ to define the local kernel.  The point of this example is simply to demonstrate numerically that a local kernel can achieve a desired change of metric without having to re-embed the data.  Note that the first four eigenfunctions of $L_{\epsilon}$ with respect to the local kernel $K(\epsilon,x,y)$, as shown in Figure \ref{flattorus}, approximate $[\sin(\theta + \theta_0),\cos(\theta+\theta_0),\sin(\phi+\phi_0),\cos(\phi+\phi_0)]$ up to phase shifts $\theta_0$ and $\phi_0$, and these are precisely the eigenfunctions of the Laplacian on the flat torus.  We note that these coordinates give an isometric embedding of the flat torus into $\mathbb{R}^4$.  

The example in this section illustrates the power of local kernels to modify the geometry of data. However, this example made use of the embedding function which is not typically known.  In the next section we will examine a data-driven approach to regularizing the geometry of data using local kernels.

\section{Data driven geometry regularization via local kernels}\label{mainresultex}

An important observation of \cite{diffusion} was that in many applications the sampling distribution is an extrinsic factor which we do not wish to influence the geometry.  However, as we have shown in Section \ref{dmbackground}, unless we know the embedding to be an isometry, not only the sampling distribution but the entire embedding geometry could be considered extrinsic.  In this section we apply a data-driven anisotropic local kernel to regularize the geometry.  

\subsection{Conformally Invariant Embedding}

The perspective of diffusion maps is that we would like to study the metric $g$ inherited from the ambient space, and thus if the data is not sampled according to the volume form of $g$, then we must remove the sampling bias.  In this section we consider an alternative explanation for the disagreement between the sampling measure and the volume form.  Using this new framework we show that it is possible to construct a kernel which is invariant to any conformal transformation of a data set.

Our new assumption will be that the data set $\{\tilde x_i\}$ was sampled uniformly on a manifold $(\mathcal{N},g_{\mathcal{N}})$ but the observed data $\{x_i\}$ is given by  a conformal isometry $\mathcal{H}:\mathcal{N} \to \mathcal{H}(\mathcal{N}) \subset \mathbb{R}^n$.  Let $\mathcal{M}=\mathcal{H}(\mathcal{N}) \subset \mathbb{R}^n$ be the observed manifold and let $g$ be the Riemannian metric that $\mathcal{M}$ inherits from the ambient space.  Since $\mathcal{H}$ is assumed to be a conformal isometry, the observed metric is given by $g = \rho g_{\mathcal{N}}$ for some positive scalar valued function $\rho$.  Moreover, considering $g_{\mathcal{N}}(x)$ and $g(x)$ as matrices, we have $g_{\mathcal{N}}(x) = D\mathcal{H}(x) g(x) D\mathcal{H}(x)$ which implies
\[ \sqrt{\textup{det}(g)} = \sqrt{\textup{det}(\rho g_{\mathcal{N}})} = \rho^{d/2} \sqrt{\textup{det}(g_{\mathcal{N}})} = \rho^{d/2} \sqrt{\textup{det}(D\mathcal{H} g D\mathcal{H} )}  = \rho^{d/2}  |D\mathcal{H}| \sqrt{\textup{det}(g)}.\]
We conclude that $|D\mathcal{H}| = \rho^{-d/2}$.  Since we assume that the original data set was uniformly sampled on $\mathcal{N}$ with respect to $g_{\mathcal{N}}$, the $\{\tilde x_i\}$ are distributed according to the volume form $d\textup{vol}_{g_{\mathcal{N}}}(x)$.  This implies that the observed data $\{x_i\}$ are distributed according to 
\[ q(x) = \textup{det}\left(D\mathcal{H}(\mathcal{H}^{-1}(x))^{-1}\right) = \rho(\mathcal{H}^{-1}(x))^{d/2}. \]  
Using this fact, we can recover the factor $\rho$, from the conformal change of metric, as $\rho(\tilde x) = q(\mathcal{H}(\tilde x))^{2/d}$.  Finally, we can recover the original metric $g_{\mathcal{N}}$ with a local kernel $K$ such that the mean and skewness are zero and the covariance is given by $K_{ij}(y) = \rho(\mathcal{H}^{-1}(x))^{-1} = q(x)^{-2/d}$.  For example, we can use the prototypical kernel
\begin{align}\label{conformalKernel} K(x,y) = \exp\left(\frac{(x-y)^\top \rho(x)\textup{Id}_{d \times d}(x-y)}{4\epsilon} \right) = \exp\left(\frac{||x-y||^2}{4\epsilon q(x)^{-2/d}}\right) 
\end{align}
to construct the Laplacian with respect to the metric $q^{2/d}g = \rho^{-1} g = \rho^{-1}\rho g_{\mathcal{N}} = g_{\mathcal{N}}$, which is the original Riemannian metric on the unobserved manifold $\mathcal{N}$.  

\begin{examp}[Conformal isometry of the unit circle] \rm We first demonstrate the difference between the conformally invariant construction and that of standard diffusion maps.  The data is originally generated uniformly on the unit circle parameterized by $\theta \in [0,2\pi)$, in this example we choose $4000$ points $\{\theta_j = 2\pi j/4000\}_{j=1}^{4000}$.  However, the observed data lies on an ellipse $x_j = \mathcal{H}(\theta_j) = (\cos\theta_j,a\sin\theta_j)^\top$.  The volume form on the ellipse is given by
\[ d\textup{vol}(x) = \sqrt{\textup{det}(D\mathcal{H}(\theta)D\mathcal{H}(\theta)^\top)} = \sqrt{\sin^2\theta+a^2\cos^2\theta} = \sqrt{1+(a^2-1)\cos^2\theta}, \]
whereas the sampling measure $q(x)$ on the ellipse is given by
\[  q(x) = |D\mathcal{H}(\theta)|^{-1} = \frac{1}{\sqrt{1+(a^2-1)\cos^2\theta}}. \]
For $a\neq 1$, the sampling density does not agree with the volume form, and the data $\{x_j\}$ is not uniformly sampled on the ellipse. We first applied the standard diffusion map with normalization $\alpha=1$ to the data set $\{x_j\}$ to construct the Laplacian operator $\Delta_{\mathcal{M}}$ with respect to the Riemannian metric that the ellipse inherits from $\mathbb{R}^2$.  Analytically, $\Delta_{\mathcal{M}}$ can be written in $\theta$-coordinates as
\[ \Delta_{\mathcal{M}} f(\theta) = \frac{1}{\sqrt{g(\theta)}} \frac{\partial}{\partial\theta}\left(  \frac{1}{\sqrt{g(\theta)}}  \frac{\partial}{\partial\theta} f(\theta) \right) = \frac{1}{g(\theta)} \frac{\partial^2 f}{\partial\theta^2} - \frac{1}{2 g(\theta)^2}\frac{\partial g}{\partial\theta} \frac{\partial f}{\partial\theta},   \]
where $g(\theta) = 1+(a^2-1)\cos^2\theta$.  The first two nontrivial eigenfunctions are given by $\phi_1(\theta) = \sin(z(\theta))$ and $\phi_2(\theta) = \cos(z(\theta))$ where $z'(\theta) = \sqrt{g(\theta)}$.  By numerically integrating, we find $z(\theta)$ and plot the first two eigenfunctions of the ellipse in Figure \ref{conformalEllipse}, where we set $a=1/6$.  These eigenfunctions are shown to agree with the eigenfunctions produced by the diffusion maps algorithm.  By plotting these eigenfunctions as $(\phi_1(\theta_j),\phi_2(\theta_j))$ in $\mathbb{R}^2$ for $j=80l, l = 1,...,50$, we see that the diffusion maps algorithm represents the geometry by a non-uniformly sampled circle, which is isometric to the ellipse with uniform sampling. 

Next, we use the local kernel \eqref{conformalKernel}, where $q(x)$ is taken from the initial kernel density estimate produced by the standard diffusion map (see Section \ref{dmbackground}).  In Figure \ref{conformalEllipse} we show that the eigenfunctions of this kernel agree with those of the standard Laplacian on the unit circle $\Delta f = \partial^2 f / \partial \theta^2$, which are simply $\tilde\phi_1 = \sin\theta$ and $\tilde\phi_2 =\cos\theta$.  Moreover, the embedding $(\tilde\phi_1(\theta_j),\tilde\phi_2(\theta_j))$ reveals that this local kernel recovers the uniformly sampled circle.  The key difference is that the local kernel modifies the geometry in order to make it agree with the sampling measure, whereas the diffusion map ignores the sampling measure and preserves the observed geometry.  Which of these results is preferable will depend on the application.  If the sampling of the data is purely incidental then the diffusion map embedding is preferable because it preserves the geometry.  If the sampling of the data should inform the analysis, then it may be advantageous to distort the geometry in order to have a uniformly sampled manifold.

 \begin{figure}[h]
  \begin{center}
\subfigure[]{  \includegraphics[height=.3\linewidth]{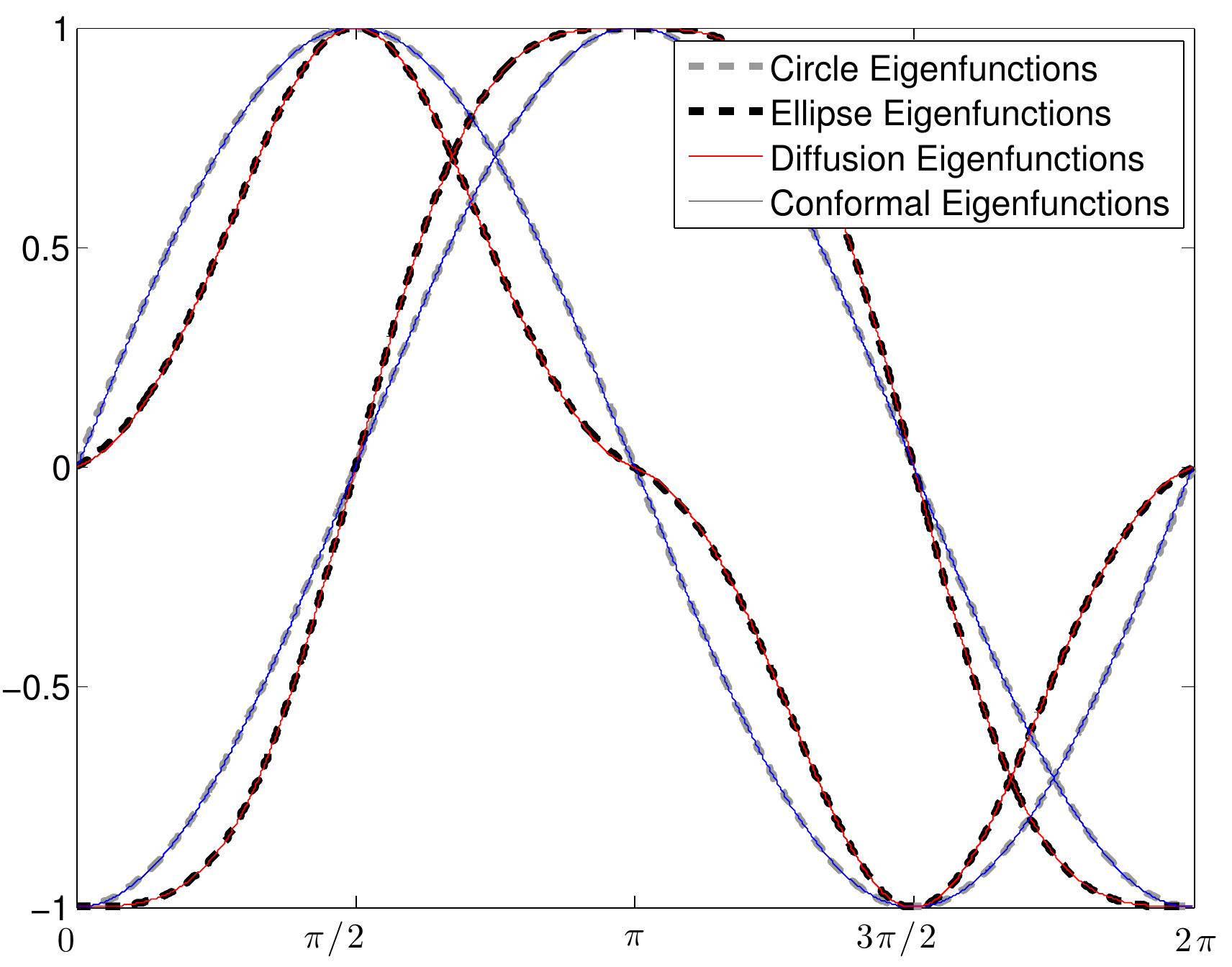}}
\subfigure[]{   \includegraphics[height=.3\linewidth]{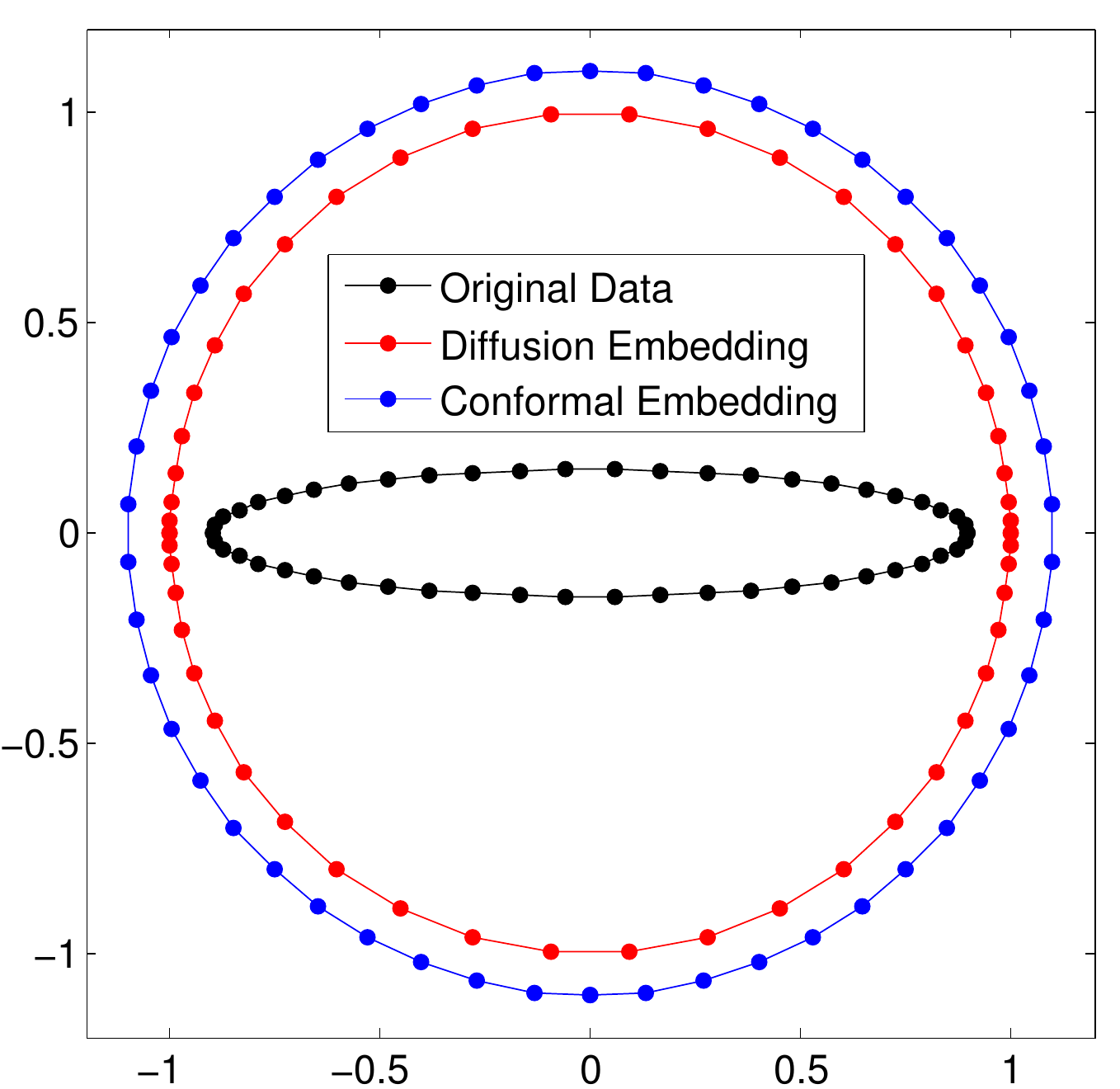}}
\caption{\label{conformalEllipse} (a) The standard diffusion map algorithm recovers the eigenfunctions of the ellipse (given by the embedding geometry), whereas the conformal map removes the latent distribution and recovers the geometry of the circle, as shown by the eigenfunctions.   The circle and ellipse eigenfunctions shown with dashed curves were computed analytically. (b) The eigenfunctions for any topological circle lie on a circle, however the conformal eigenfunctions are uniformly distributed (diameters of both embeddings were adjusted for clarity).  Plots were generated by applying the diffusion maps and conformal maps algorithms to 4000 points sampled from the ellipse with major axis length of 1 and minor axis length of $1/6$ shown in black, where every 80th point is shown to illustrate the densities.  }
\end{center}
\end{figure}

\end{examp}

The previous example shows that the local kernel \eqref{conformalKernel} can recover a uniformly-sampled intrinsic manifold that has been mapped by a conformal isometry before being observed.  Thus, the kernel will recover the same geometry from any two different data sets generated by conformal isometries applied to an initial data set that is uniformly sampled. This leads to an interesting application: We can use this kernel to detect when two embeddings of a data set are conformally equivalent. 

 Assume that we are given two copies of a data set, $\{y_j\} \subset \mathcal{M}_1 \subset \mathbb{R}^{m_1}$ and $\{z_j\} \subset \mathcal{M}_2 \subset \mathbb{R}^{m_2}$ where $y_j$ are sampled according to an arbitrary density $q_1(y)$.   Assume further that the second data set is actually given by a conformal isometry of the first data set, so that $z_j = \mathcal{F}(y_j)$.  In this case, applying the local kernel \eqref{conformalKernel} to $\{y_j\}$ we will find the Riemannian metric $g = q^{-2/d}g_1$ where $g_1$ is the metric $\mathcal{M}_1$ inherits from the ambient space, and the sampling of $\mathcal{M}_1$ is uniform with respect to $g$.  Moreover, since $\{z_j\}$ is given by a conformal isometry applied to $\{y_j\}$, the metric $g_2$ which $\mathcal{M}_2$ inherits from the ambient space is given by $g_2 = \rho g_1$ for some scalar function $\rho$.  This implies that $g_2 = \rho g_1 = \rho q^{2/d} g$ so that $g_2$ is conformally equivalent to $g$, and $\{z_j\}$ have sampling density $q_2 = \rho^{d/2}q_1$.  Applying the local kernel \eqref{conformalKernel} to $\{z_j\}$ we find the metric $q_2^{-2/d}g_2 = \rho^{-1}q_1^{-2/d}g_2 = g$, which is the same metric as the local kernel \eqref{conformalKernel} found on $\{y_j\}$.  This shows that the local kernel \eqref{conformalKernel} is invariant under any conformal isometry of a given data set.  In the next example we demonstrate this algorithm for two conformally equivalent tori in $\mathbb{R}^3$.
 
 \begin{examp}[Conformally equivalent tori]\rm
 In this example we consider the torus of Section \ref{flattorussec} and a conformally equivalent torus given by
 \[ \tilde \iota((\theta,\phi)) = \left( (\sqrt{2}+\sin\theta)\cos\phi, (\sqrt{2}+\sin\theta)\sin\phi, \cos\theta \right)^\top. \]
 We note that the choice of the radii $2$ and $\sqrt{2}$ is necessary to insure the tori are conformally equivalent.  To test the conformally invariant embedding, we generated $10000$ points on a uniform grid $(\theta_i,\phi_i) \in [0,2\pi)^2$ and mapped them into $\mathbb{R}^3$ via $x_i = \iota(\theta_i,\phi_i)$ and $\tilde x_i = \tilde \iota(\theta_i,\phi_i)$, as shown in Figure \ref{conformalembeddings}(a).  We first applied the conformally invariant embedding developed above to each data set, and found the first 10 eigenvectors of $L_{\epsilon}$ constructed from the local kernel $K$ in \eqref{conformalKernel}.  We used these eigenvectors to form a conformally invariant embedding with coordinates, $\Phi(x_i) = (\varphi_1(x_i),...,\varphi_{10}(x_i))^\top$ and $\tilde \Phi(\tilde x_i) = (\tilde \varphi_1(\tilde x_i),...,\tilde\varphi_{10}(\tilde x_i))^\top$.  Ordinary least squares finds the optimal linear map between these coordinate systems, which maps the coordinates $\tilde\Phi(\tilde x_i)$ into the conformally invariant embedding space for $\{x_i\}$. We then applied diffusion maps (with $\alpha=1$) to both data sets, and using the first 10 diffusion coordinates,  built a linear map from the diffusion coordinates of $\tilde x_i$ to those of $x_i$.  Figure \ref{conformalembeddings} shows pictorially that the conformally invariant embedding coordinates are the same for the two conformally equivalent data sets. Because the tori are conformal, the conformally invariant geometries are isometric, which implies that the eigenfunctions are identical up to an orthogonal linear transformation, as shown by the agreement in Figure \ref{conformalembeddings}(b).  On the other hand, the standard diffusion map represents the geometry that each data set inherits from the embedding shown in (a), and since these are not isometric, there is no linear map between their respective eigenfunctions, as shown by the disagreement in Figure  \ref{conformalembeddings}(c).

  \begin{figure}[h]
  \begin{center}
\subfigure[]{  \includegraphics[height=.17\linewidth]{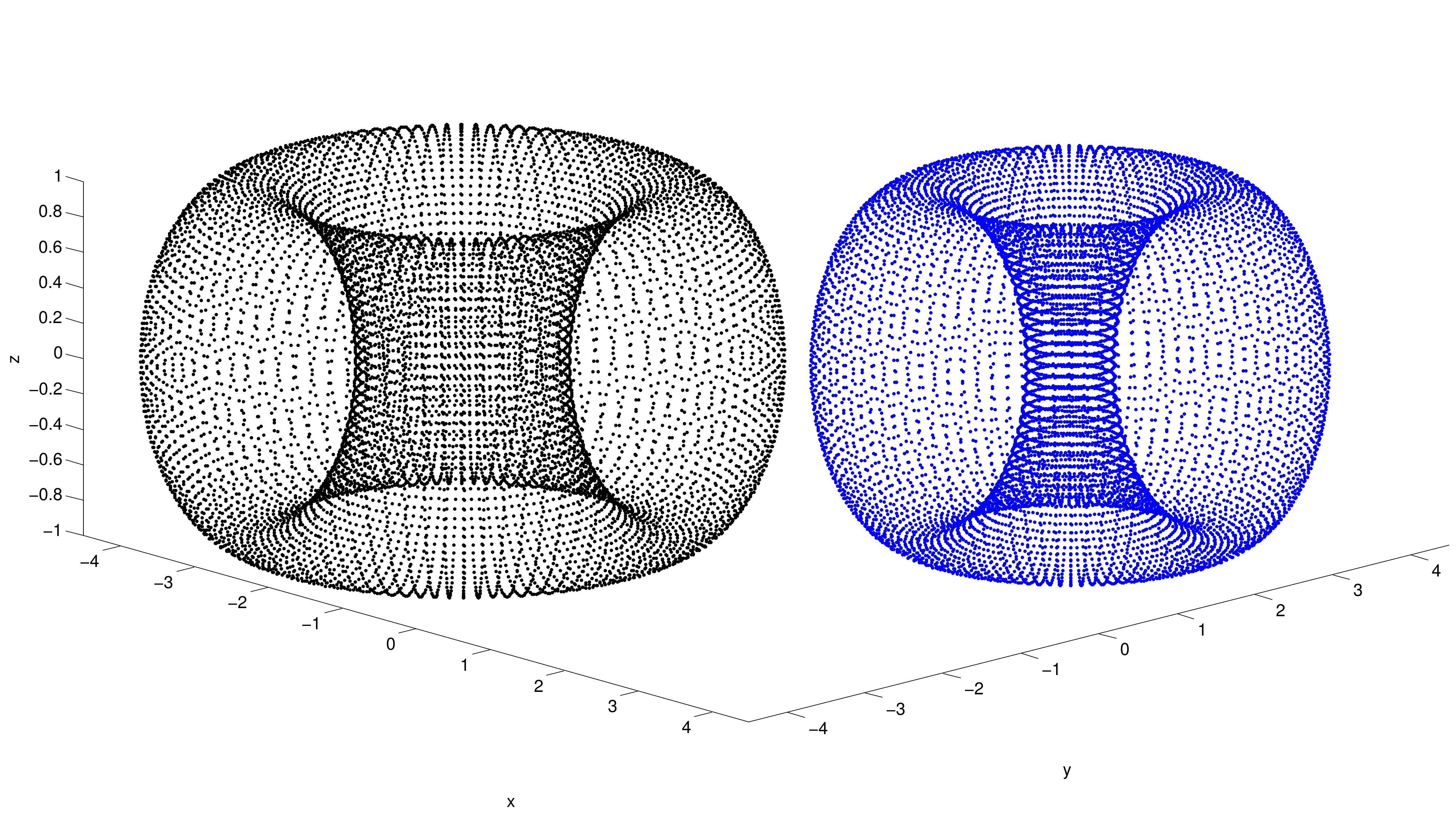}}
\subfigure[]{   \includegraphics[height=.17\linewidth]{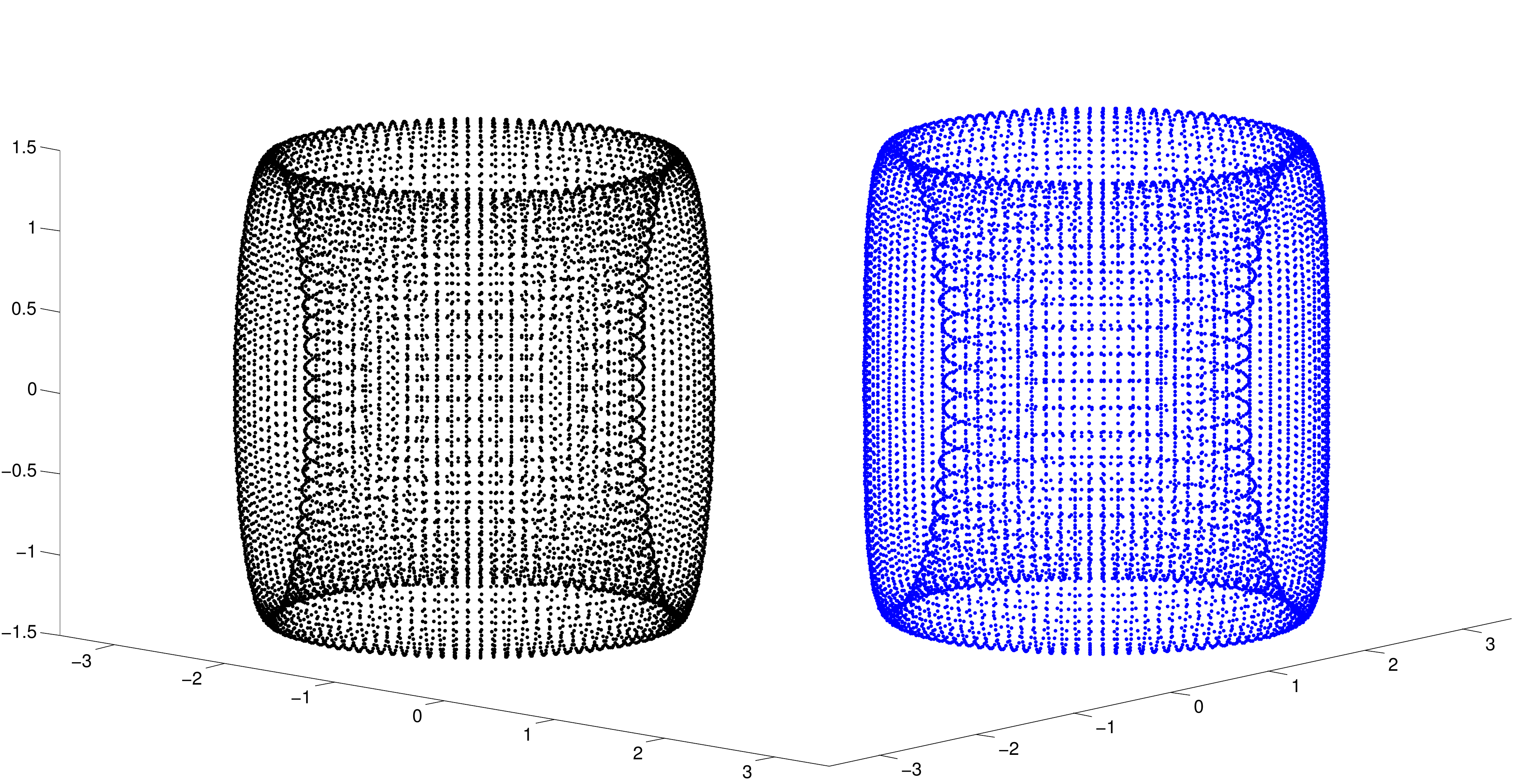}}
\subfigure[]{     \includegraphics[height=.17\linewidth]{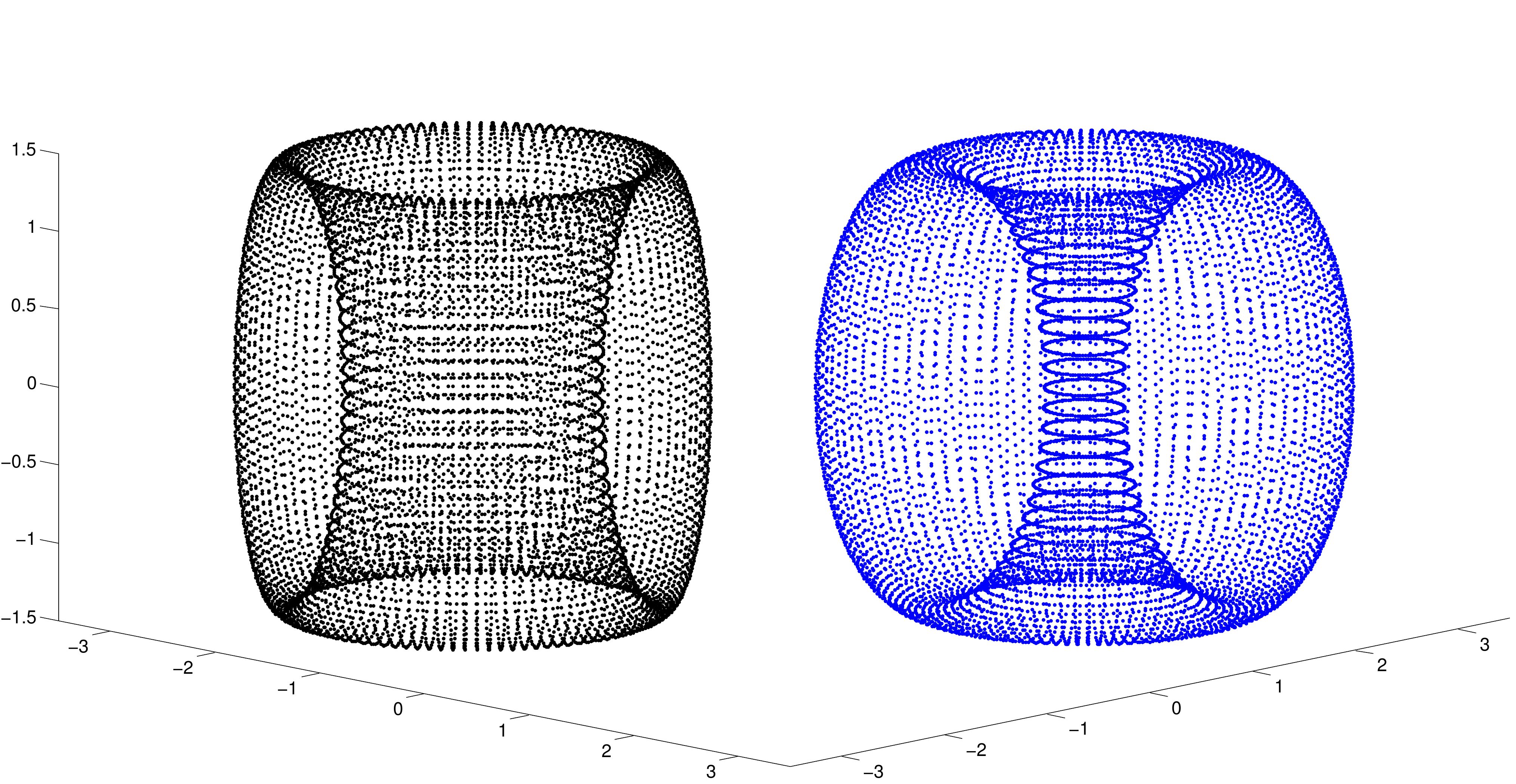}}
\caption{\label{conformalembeddings} (a) Original data sets $\{x_i\}$ (left, black) and $\{\tilde x_i\}$ (right, blue) lying on conformally equivalent tori.  (b) Conformally invariant embedding of $\{x_i\}$ (left,black) and the linearly mapped coordinates of the conformally invariant embedding of $\{\tilde x_i\}$ (right, blue).  (c) Diffusion map embedding of $\{x_i\}$ (left, black) and the linearly mapped diffusion coordinates of $\{\tilde x_i\}$ (right, blue).  }
\end{center}
\end{figure}

 \end{examp}

\subsection{Global diffeomorphism reconstruction}\label{diffeomorphismsec}

In this section we assume that we are given two datasets that are related by a global diffeomorphism, and show how to use a local kernel to reconstruct the diffeomorphism.  In particular, assume that $\tilde x_i \in \mathcal{N}\subset \mathbb{R}^m$ and $x_i = \mathcal{H}(\tilde x_i)$, where $\mathcal{H}:\mathcal{N} \hookrightarrow \mathbb{R}^{n}$ is an unknown diffeomorphism, so that $x_i$ lie on $\mathcal{M} = \mathcal{H}(\mathcal{N})$.  The key will be that we have a correspondence between individual points in the data sets.  This is often the case when we have multiple time series observations of some intrinsic state, such as assorted simultaneous observations of a dynamical system. 

To reconstruct the global diffeomorphism, we will use a local kernel to push-forward the Riemannian metric from $\mathcal{N}$ onto $\mathcal{M}$ via the correspondence between the data sets.  With this metric on $\mathcal{M}$, the two manifolds are isometric, which implies that their Laplacians have the same eigenvalues, and that the associated eigenfunctions of any eigenvalue are related by an orthogonal transformation \cite{laplacianBook}.  We can then easily estimate this orthogonal transformation using linear least squares.  A related technique was introduced in \cite{hirn} for mapping between diffusion maps embeddings; the difference here is that such a linear map provably exists since we use local kernels to change the geometry so that the manifolds are isometric. 

In order to push the metric forward from $\cal N$ onto $\cal M$, we need to estimate $D\mathcal{H}$ and then apply Theorem \ref{maintheorem} to $x_i$ on $\mathcal{M}$ with the prototypical kernel
\[ K(\epsilon,x,y) = \exp\left(-\frac{(y-x)^\top D\mathcal{H}(x)^\top D\mathcal{H}(x)(y-x)}{2\epsilon}\right). \]
To estimate the matrix $D\mathcal{H}(x_i)$, we take the nearest neighbors $\{x_j\}$ of $x_i$ and use the correspondence to find $\tilde x_i = \mathcal{H}^{-1}(x_i)$ and the neighbors $\tilde x_j = \mathcal{H}^{-1}(x_j)$.  Note that $\tilde x_j$ may not be the nearest neighbors of $\tilde x_i$ due to the diffeomorphism.  We then construct the weighted vectors
\[ v_j = \exp\left(-||x_j-x_i||^2/\epsilon\right)(x_j-x_i) \hspace{50pt} \tilde v_j = \exp\left(-||x_j-x_i||^2/\epsilon\right)(\tilde x_j-\tilde x_i), \] 
and define $D\mathcal{H}_i$ to be the $m\times n$ matrix which minimizes $\sum_j ||\tilde v_i - D\mathcal{H}_i v_j ||^2$.  We note that the exponential weight is used to localize the vectors; otherwise the linear least squares problem would try to preserve the longest vectors $x_j-x_i$, which do not represent the tangent space well.  Notice that the same exponential factor is used on both the $v_j$ and the $\tilde v_j$ so that all the distortion of distances is represented linearly.  We can now approximate $D\mathcal{H}(x_i)^\top D\mathcal{H}(x_i) \approx D\mathcal{H}_i^\top D\mathcal{H}_i$, so that numerically we evaluate the local kernel
\begin{align}\label{diffeokernel} K(\epsilon,x_i,x_j) = \exp\left(-\frac{|| D\mathcal{H}_i(x_j-x_i)||^2}{2\epsilon}\right). \end{align}
Using Theorem \ref{maintheorem} we approximate the Laplacian $\Delta_{\tilde g} = \mathcal{\mathcal{H}}^*\Delta_{g_{\mathcal{N}}}$ on $\mathcal{M}$ and use the standard diffusion maps algorithm (with $\alpha=1$) to approximate the Laplacian $\Delta_{g_{\mathcal{N}}}$ on $\mathcal{N}$.  Since $(\mathcal{M},\tilde g)$ and $(\mathcal{N},g_{\mathcal{N}})$ are isometric, the eigenvalues will be the same (up to the precision of the discrete approximation) and the eigenfunctions will be related by orthogonal transformations.  Thus, we can build a linear map $H$ between the eigenfunctions by ordinary least squares.  

Using this linear map between the eigenfunctions we can represent the global diffeomorphism.  By taking sufficiently many eigenfunctions $\varphi_l$ and $\tilde \varphi_l$ on the respective manifolds, the eigenfunctions can be considered coordinates of an embeddings $\Phi(x) = (\varphi_1(x),...,\varphi_{\hat n}(x))^\top$ and $\tilde \Phi(\tilde x) = (\tilde \varphi_1(\tilde x),...,\tilde \varphi_{\hat m}(\tilde x))^\top$.  We thus have the commutative diagram

  \begin{center}
$$  \begin{array}[c]{ccc}
\mathcal{N}&\xrightarrow{\ \ \ \ \ \mathcal{H}\ \ \ \ \ }&\mathcal{M}\\ \\
\left\downarrow\rule{0cm}{.5cm}\right.   \scriptstyle{\tilde\Phi}&&\left\downarrow\rule{0cm}{.5cm}\right. \scriptstyle{\Phi}\\ \\
L^2(\mathcal{N},g_\mathcal{N}) \approx \mathbb{R}^{\hat n}&\xrightarrow{\ \ \ \ \ H\ \ \ \ \ }&L^2
(\mathcal{M},{\tilde g}) \approx \mathbb{R}^{\hat m}
\end{array}$$
\end{center}
where $H = \Phi \circ \mathcal{H} \circ \tilde \Phi^{-1}$ is linear.  Using various standard techniques we can extend the maps $\tilde \Phi$ and $\Phi$ and their inverses to new data points and so the map $H$ represents the global diffeomorphism $\mathcal{H}$ in the eigenfunction coordinates.  In the following example we demonstrate this technique on a torus in $\mathbb{R}^3$ and compare to constructing a linear map in diffusion coordinates.

 \begin{figure}[h]
  \begin{center}
\subfigure[]{  \includegraphics[height=.2\linewidth]{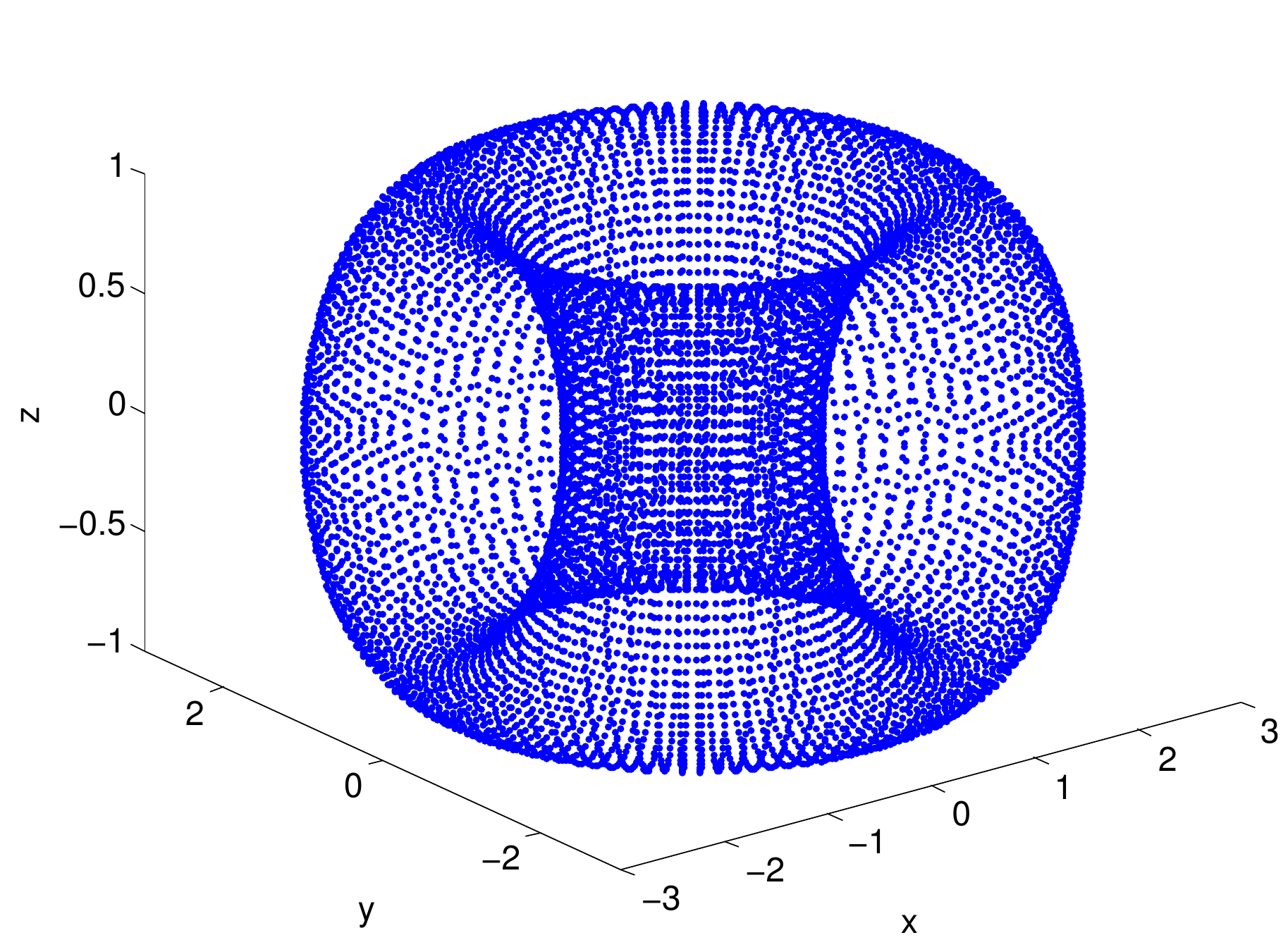}}
\subfigure[]{   \includegraphics[height=.2\linewidth]{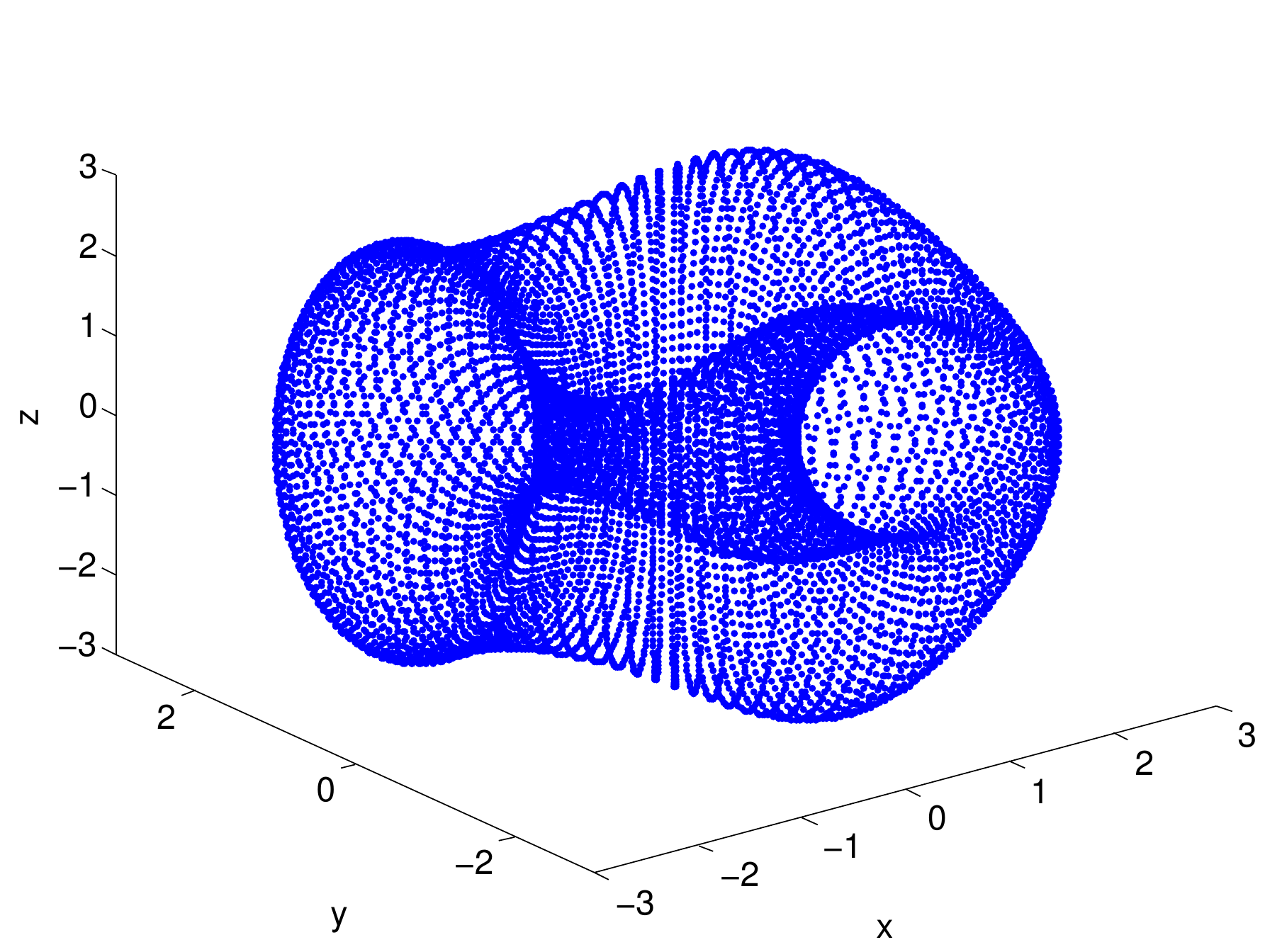}}\\
\subfigure[]{     \includegraphics[height=.3\linewidth]{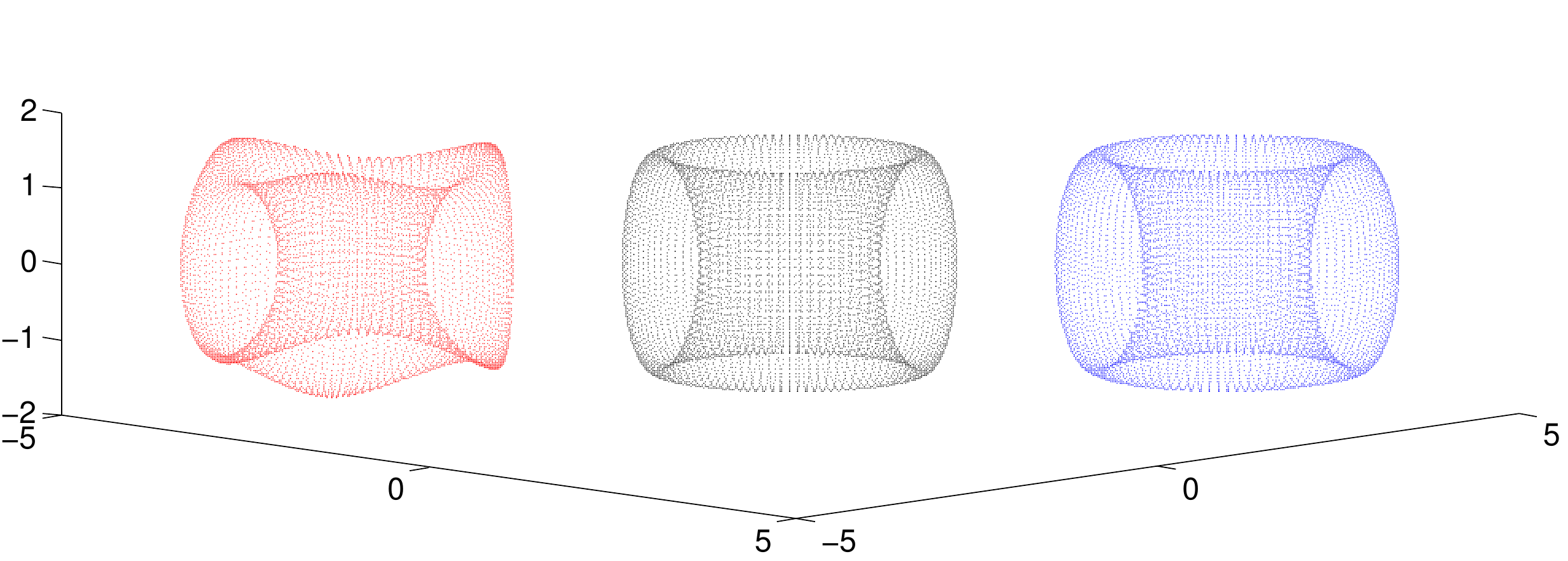}}
\caption{\label{diffeomorphism} (a) Original data set $\{\tilde x_i\}$ on $\mathcal{N}$ and (b) the diffeomorphic images $\{x_i  = \mathcal{H}(\tilde x_i)\}$ on $\mathcal{M}$. (c) Diffusion map coordinates for $\{\tilde x_i\}$ (center, black) compared to the linearly mapped diffusion coordinates for $\{x_i\}$ (red, left) and the linearly mapped eigenfunction coordinates $H\Phi(x_i)$ (blue, right).  Since the geometries which (a) and (b) inherit from their embeddings are only diffeomorphic and not isometric, the eigenfunctions produced by the diffusion map cannot be linearly mapped as shown by the disagreement between the red and black embeddings in (c).  By using the local kernel \eqref{diffeokernel} we push the geometry of (a) onto the data set (b) using the known correspondence between the points as shown by the agreement of the diffusion map embedding of (a), shown in (c, black), with the linearly mapped eigenfunctions of the local kernel applied to (b), shown in (c, blue).}
\end{center}
\end{figure}

\begin{examp}[Reconstructing a global diffeomorphism of the torus]\rm In this example we let $\mathcal{N}$ be the torus of Section \ref{flattorussec} with Euclidean coordinates $(x,y,z) = \iota((\theta,\phi))$ in $\mathbb{R}^3$, and we let 
\[ \mathcal{H}(x,y,z) = [x,y,(2+\sin(3\tan^{-1}(y/x))/2)z]^\top \]
be the unknown diffeomorphism.  The two tori are shown in Figure \ref{diffeomorphism}(a) and (b), respectively, where $10000$ points were generated on a uniform grid $(\theta_i,\phi_i)\in [0,2\pi]^2$ and where $\tilde x_i = \iota(\theta_i,\phi_i)$, $x_i = \mathcal{H}(\tilde x_i)$.

We applied the standard diffusion map to $\tilde x_i$ to estimate $\Delta_{g_{\mathcal{N}}}$ and the first 10 eigenfunctions, $\tilde \Phi(\tilde x_i)$, which represent the geometry which the data set $\tilde x_i$ inherits from the ambient space shown in Figure \ref{diffeomorphism}(a).  We then applied the above algorithm to $x_i = \mathcal{H}(\tilde x_i)$ (note that the algorithm also requires $\tilde x_i$) to estimate $\Delta_{\tilde g}$ and the first 10 eigenfunctions, $\Phi(x_i)$, which represents the geometry $\tilde g$ on the data set shown in Figure \ref{diffeomorphism}(b).  Note that the geometry $\tilde g$ is not the same as the geometry which $\{x_i\}$ inherits from the ambient space.  Instead, we have use the local kernel \eqref{diffeokernel} to push the geometry of the data set $\{\tilde x_i\}$ onto the data set $\{x_i\}$ which means that $(\mathcal{M},\tilde g)$ and $(\mathcal{N},g_{\mathcal{N}})$ are isometric as shown above.  Since the manifolds with these geometries are isometric, the eigenfunctions of their respective Laplacians are identical up to an orthogonal transformation.  To verify this numerically, we used least squares optimization to estimate the linear transformation $H$ from the eigenfunctions $\Phi(x_i)$ to the eigenfunctions $\tilde \Phi(\tilde x_i)$.  We then use $H$ to map the eigenfunction coordinates $\Phi(x_i)$ into the diffusion map coordinate space for $\mathcal{N}$.  In Figure \ref{diffeomorphism}(c), we compare the diffusion maps coordinates $\tilde \Phi_i(\tilde x)$ (black, middle of figure) with $H\Phi(x_i)$ (blue, right side of figure).  We also attempted to linearly map the diffusion map coordinates for $\{x_i\}$ into those for $\{\tilde x_i\}$, and we show the result in Figure \ref{diffeomorphism}(c)  (right side) for comparison.  Note that because the local kernel puts an isometric geometry onto $\mathcal{M}$ the eigenfunctions of $\Delta_{\tilde g}$ can be linearly mapped onto the diffusion map eigenfunctions for $\mathcal{N}$.  However, because $\mathcal{M}$ and $\mathcal{N}$ are not isometric with respect to the geometries inherited from their respective embeddings (shown in Figures \ref{diffeomorphism}(a) and \ref{diffeomorphism}(b) respectively), there is no linear map between the diffusion eigenfunctions of these data sets.

\end{examp}

\section{Conclusion}

In this article, we have extended the geometric perspective of the original diffusion map construction to a class of kernels that is large as feasible. In fact, we show that any kernel with exponential decay leads naturally to a Laplacian with respect to some Riemannian geometry.  The exponential decay is crucial, to constrict all information to flow through local interactions.

Theorems \ref{maintheorem} and \ref{mainconverse} show that every symmetric local kernel corresponds to a Riemannian geometry and conversely, any Riemannian geometry can be represented with an appropriate local kernel. This opens up all kernels with exponential decay to exploitation by the whole range of geometric tools.  On the other hand, local kernels can be classified by their intrinsic geometry, and every intrinsic geometry will be accessible by a prototypical kernel.  This shows that, in the limit of large data, one can always use a prototypical kernel; indeed this will typically be advantageous since the prototypical kernels are skew-free, which leads to fast convergence to the limiting operators.  

In Section \ref{mainresultex} we showed how to construct an embedding which is invariant under conformal transformations.  We then showed how to use a local kernel to reconstruct a global diffeomorphism between two data sets.  One potential application of this result is to dynamical systems, since there are often various observable physical aspects of the system.  A theorem of Takens \cite{Takens,SYC} states that the method of time-delay coordinates can be used to reconstruct a state space which is equivalent to the full dynamical system up to a diffeomorphism.  This means that each observed time series can be used to create a diffeomorphic copy of the dynamical system.  In the case where the dynamical system lies on an attractor, we can use this method to map each data set into any other coordinates, or given new data in some observation, this data can be mapped into other observations spaces.  We should caution that the method of Section \ref{diffeomorphismsec} relies on approximating a sufficient number of eigenfunctions of the Laplacian to represent the entire manifold, and for a high-dimensional manifold (such as the attractor of a complex dynamical system) this would require such a large amount of data that it would typically be computationally infeasible.  However, even in this case, the technique in Section \ref{diffeomorphismsec} may still be valuable as a coarse map between observation spaces.

Several further applications of this generalization are apparent.  In \cite{DMDC}, it was found that the traditional attractor reconstruction methods using delay coordinates biases the manifold toward stable components. A natural candidate for intrinsic geometry on a dynamical system is the Lyapunov geometry \cite{RDSbook}, because it is invariant to diffeomorphic observations, such as delay-coordinates. If the Lyapunov geometry is the goal, then the embedding geometry is largely extrinsic, and needs to be removed. Using an appropriate local kernel, it should be possible to recover this intrinsic geometry.  Beyond building diffeomorphisms between data sets, it may also be desirable to isolate differences in data sets.  One possibility would be identifying subsets of each data set which are diffeomorphic, however it is unclear how to identify these subsets.  Alternatively, if the difference is represented in certain components of the data it may be possible to identify these components as those which are not captured in the global diffeomorphism reconstruction (which in this case would only be an approximate diffeomorphism). Moreover, in many applications certain `features' of interest have already been identified and this should inform the geometry in the local kernel.  In this paper we have shown how to design a local kernel which recovers a conformally invariant geometry; if this approach could be generalized to recover geometries invariant to the known features, this geometry could be used to find the most important aspects of the data beyond those already represented.

Image and video analysis provide another example. Each image, or video frame, can be considered a vector of pixels in a high-dimensional data space. Such an embedding treats pixels on the opposite side of a frame the same as nearby pixels, which is often a poor assumption. There is a need to apply a more informative geometric prior. In fact, this idea is crucial for any data of interest that is accompanied by metadata. By allowing the metric to depend on the metadata, local kernels enable a large array of options  to make use of {\it a priori} connections.

\section{Acknowledgements} 

We thank two anonymous reviewers for suggestions that significantly improved the manuscript.
This research was partially supported by NSF grants DMS-1216568, DMS-1250936, and CMMI-130007.

\bibliographystyle{plain}
\bibliography{localkernelsbib}

\end{document}